\newcommand{\klockan}{\the\hours:{\ifnum\minutes<10 0\fi}\the\minutes}
\newcommand{\tid}{\today\ \klockan}
\newcommand{\prtid}{\smash{\raise 10mm \hbox{\LaTeX ed \tid}}}
\renewcommand{\prtid}{}
\def\sectionmark#1{} 
\def\subsectionmark#1{}
\newcommand{\sectnr}{\ifnum \c@secnumdepth >\z@
   \thesection.\hskip 1em\relax \fi}
\def\@evenhead{\footnotesize\rm\thepage\hfil\leftmark\hfil\llap{\prtid}}
\def\@oddhead{\footnotesize\rm\rlap{\prtid}\hfil\rightmark\hfil\thepage}
\def\tableofcontents{\section*{Contents} 
 \@starttoc{toc}}
\def\@biblabel#1{#1.}
\let\Thebibliography=\thebibliography
\renewcommand{\thebibliography}[1]{\def\@mkboth##1##2{}\Thebibliography{#1}
\addcontentsline{toc}{section}{References}
\frenchspacing 
\setlength{\@topsep}{0pt}
\setlength{\itemsep}{0pt}%
\setlength{\parskip}{0pt plus 2pt}%
}
\def\mdots@{\mathinner.\nonscript\!.%
 \ifx\next,.\else\ifx\next;.\else\ifx\next..\else
 \nonscript\!\mathinner.\fi\fi\fi}
\let\ldots\mdots@
\let\cdots\mdots@
\let\dotso\mdots@
\let\dotsb\mdots@
\let\dotsm\mdots@
\let\dotsc\mdots@
\def\vdots{\vbox{\baselineskip2.8\p@ \lineskiplimit\z@
    \kern6\p@\hbox{.}\hbox{.}\hbox{.}\kern3\p@}}
\def\ddots{\mathinner{\mkern1mu\raise8.6\p@\vbox{\kern7\p@\hbox{.}}%
    \raise5.8\p@\hbox{.}\raise3\p@\hbox{.}\mkern1mu}}
\let\Enumerate=\enumerate
\renewcommand{\enumerate}{\Enumerate%
\setlength{\itemsep}{0pt}%
\setlength{\parskip}{0pt plus 1pt}%
\renewcommand{\theenumi}{\textup{(\alph{enumi})}}%
\renewcommand{\labelenumi}{\theenumi}%
}
\def\@seccntformat#1{\csname the#1\endcsname.\quad}
\newcommand{\authortitle}[2]{\author{#1}\title{#2}\markboth{#1}{#2}}
\newcommand{\auth}[2]{{#1, #2.}}
\newcommand{\art}[6]{{\sc #1, \rm #2, \it #3 \bf #4 \rm (#5), \mbox{#6}.}}
\newcommand{\artin}[3]{{\sc #1, \rm #2,  in #3.}}
\newcommand{\artprep}[3]{{\sc #1, \rm #2, #3.}}
\newcommand{\book}[3]{{\sc #1, \it #2, \rm #3.}}
\newcommand{\AND}{{\rm and }}
\newcommand{\artnopt}[6]{{\sc #1, \rm #2, \it #3\/ \bf #4 \rm (#5), \mbox{#6}}}
\newcommand{\arXiv}[1]{{\tt \href{https://arxiv.org/abs/#1}{arXiv:#1}}}
\newtheoremstyle{descriptive}%
  {\topsep}   
  {\topsep}   
  {\rmfamily} 
  {}          
  {\bfseries} 
  {.}         
  { }         
  {}          
\newtheoremstyle{propositional}%
  {\topsep}   
  {\topsep}   
  {\itshape}  
  {}          
  {\bfseries} 
  {.}         
  { }         
  {}          
\newtheoremstyle{remarkstyle}%
  {\topsep}   
  {\topsep}   
  {\rmfamily}  
  {}          
  {\itshape} 
  {.}         
  { }         
  {}          
\theoremstyle{propositional}
\newtheorem{thm}{Theorem}[section]
\newtheorem{prop}[thm]{Proposition}
\newtheorem{lem}[thm]{Lemma}
\newtheorem{cor}[thm]{Corollary}
\theoremstyle{descriptive}
\newtheorem{deff}[thm]{Definition}
\newtheorem{example}[thm]{Example}
\newtheorem{remark}[thm]{Remark}
\renewenvironment{proof}[1][\proofname]{\par
  \pushQED{\qed}%
  \normalfont
  \trivlist
  \item[\hskip\labelsep
        \itshape
    #1\@addpunct{.}]\ignorespaces
}{%
  \popQED\endtrivlist\@endpefalse
}
\newcommand{\setm}{\setminus}
\renewcommand{\subsetneq}{\varsubsetneq}
\renewcommand{\emptyset}{\varnothing}
{\catcode`p =12 \catcode`t =12 \gdef\eeaa#1pt{#1}}      
\def\accentadjtext#1{\setbox0\hbox{$#1$}\kern   
                \expandafter\eeaa\the\fontdimen1\textfont1 \ht0 }
\def\accentadjscript#1{\setbox0\hbox{$#1$}\kern 
                \expandafter\eeaa\the\fontdimen1\scriptfont1 \ht0 }
\def\accentadjscriptscript#1{\setbox0\hbox{$#1$}\kern   
                \expandafter\eeaa\the\fontdimen1\scriptscriptfont1 \ht0 }
\def\accentadjtextback#1{\setbox0\hbox{$#1$}\kern       
                -\expandafter\eeaa\the\fontdimen1\textfont1 \ht0 }
\def\accentadjscriptback#1{\setbox0\hbox{$#1$}\kern     
                -\expandafter\eeaa\the\fontdimen1\scriptfont1 \ht0 }
\def\accentadjscriptscriptback#1{\setbox0\hbox{$#1$}\kern 
                -\expandafter\eeaa\the\fontdimen1\scriptscriptfont1 \ht0 }
\def\itoverline#1{{\mathsurround0pt\mathchoice
        {\rlap{$\accentadjtext{\displaystyle #1}
                \accentadjtext{\vrule height1.593pt}
                \overline{\phantom{\displaystyle #1}
                \accentadjtextback{\displaystyle #1}}$}{#1}}
        {\rlap{$\accentadjtext{\textstyle #1}
                \accentadjtext{\vrule height1.593pt}
                \overline{\phantom{\textstyle #1}
                \accentadjtextback{\textstyle #1}}$}{#1}}
        {\rlap{$\accentadjscript{\scriptstyle #1}
                \accentadjscript{\vrule height1.593pt}
                \overline{\phantom{\scriptstyle #1}
                \accentadjscriptback{\scriptstyle #1}}$}{#1}}
        {\rlap{$\accentadjscriptscript{\scriptscriptstyle #1}
                \accentadjscriptscript{\vrule height1.593pt}
                \overline{\phantom{\scriptscriptstyle #1}
                \accentadjscriptscriptback{\scriptscriptstyle #1}}$}{#1}}}}
\def\itunderline#1{{\mathsurround0pt\mathchoice
        {\rlap{$\underline{\phantom{\displaystyle #1}
                \accentadjtextback{\displaystyle #1}}$}{#1}}
        {\rlap{$\underline{\phantom{\textstyle #1}
                \accentadjtextback{\textstyle #1}}$}{#1}}
        {\rlap{$\underline{\phantom{\scriptstyle #1}
                \accentadjscriptback{\scriptstyle #1}}$}{#1}}
        {\rlap{$\underline{\phantom{\scriptscriptstyle #1}
                \accentadjscriptscriptback{\scriptscriptstyle #1}}$}{#1}}}}
\def\cprime{{\mathsurround0pt$'$}}
\newcommand{\limplus}{{\mathchoice{\vcenter{\hbox{$\scriptstyle +$}}}
  {\vcenter{\hbox{$\scriptstyle +$}}}
  {\vcenter{\hbox{$\scriptscriptstyle +$}}}
  {\vcenter{\hbox{$\scriptscriptstyle +$}}}
}}
\newcommand{\limminus}{{\mathchoice{\vcenter{\hbox{$\scriptstyle -$}}}
  {\vcenter{\hbox{$\scriptstyle -$}}}
  {\vcenter{\hbox{$\scriptscriptstyle -$}}}
  {\vcenter{\hbox{$\scriptscriptstyle -$}}}
}}
\newcommand{\Cpt}{{C_{s,p}}}
\newcommand{\Csp}{{C_{s,p}}}
\newcommand{\Cop}{{C_{1,p}}}
\newcommand{\cpt}{{\capp_{s,p}}}
\newcommand{\csp}{{\capp_{s,p}}}
\DeclareMathOperator{\diam}{diam}
\DeclareMathOperator{\dist}{dist}
\DeclareMathOperator{\capp}{cap}
\DeclareMathOperator{\Lip}{Lip}
\newcommand{\Lipc}{{\Lip_c}}
\DeclareMathOperator*{\esssup}{ess\,sup}
\DeclareMathOperator{\Tail}{Tail}
\newcommand{\bdry}{\partial}
\newcommand{\bdy}{\bdry}
\newcommand{\simle}{\lesssim}
\newcommand{\loc}{_{\rm loc}}
\newcommand{\ga}{\gamma}
\newcommand{\eps}{\varepsilon}
\newcommand{\Om}{\Omega}
\newcommand{\gt}{\tilde{g}}
\renewcommand{\phi}{\varphi}
\newcommand{\p}{{$p\mspace{1mu}$}}
\newcommand{\R}{\mathbf{R}}
\newcommand{\Q}{\mathbf{Q}}
\newcommand{\Rn}{{\R^n}}
\newcommand{\eR}{{\overline{\R}}}
\def\cprime{{\mathsurround0pt$'$}}
\newcommand{\clOm}{\overline{\Om}}
\newcommand{\LL}{\mathcal{L}}
\newcommand{\UU}{\mathcal{U}}
\newcommand{\uP}{\itoverline{P}}     
\newcommand{\lP}{\itunderline{P}} 
\DeclareMathOperator{\supp}{supp}
\newcommand{\E}{\mathcal{E}}
\newcommand{\clG}{\itoverline{G}}
\newcommand{\Vsp}{V^{s,p}}
\newcommand{\VspOm}{V^{s,p}(\Om)}
\newcommand{\Vst}{V^{s,2}}
\newcommand{\VstOm}{V^{s,2}(\Om)}
\newcommand{\Vspo}{V^{s,p}_0}
\newcommand{\VspoOm}{V^{s,p}_0(\Om)}
\newcommand{\Wsp}{W^{s,p}}
\newcommand{\Wsploc}{W^{s,p}\loc}
\newcommand{\Wp}{W^{1,p}}
\newcommand{\La}{\Lambda}
\newcommand{\Omc}{{\Om^c}}
\newcommand{\Gc}{G^c}
\newcommand{\clB}{\itoverline{B}}
\numberwithin{equation}{section}
\newcommand{\eqv}{\ensuremath{
\mathchoice{\quad \Longleftrightarrow \quad}{\Leftrightarrow}
                {\Leftrightarrow}{\Leftrightarrow}}}
\newcommand{\imp}{\ensuremath{
\mathchoice{\quad \Longrightarrow \quad}{\Rightarrow}
                {\Rightarrow}{\Rightarrow}}}
\newenvironment{ack}{\medskip{\it Acknowledgement.}}{}
\begin{document}

\authortitle{Anders Bj\"orn, Jana Bj\"orn and Minhyun Kim}
            {Semiregular and strongly irregular boundary points 
for nonlocal Dirichlet problems}

\author{
Anders Bj\"orn \\
\it\small Department of Mathematics, Link\"oping University, SE-581 83 Link\"oping, Sweden\\
\it \small anders.bjorn@liu.se, ORCID\/\textup{:} 0000-0002-9677-8321
\\
\\
Jana Bj\"orn \\
\it\small Department of Mathematics, Link\"oping University, SE-581 83 Link\"oping, Sweden\\
\it \small jana.bjorn@liu.se, ORCID\/\textup{:} 0000-0002-1238-6751
\\
\\
 Minhyun Kim \\
 \it\small Department of Mathematics \& Research Institute for Natural Sciences, \\
 \it\small Hanyang University, 04763 Seoul, Republic of Korea \\
 \it \small minhyun@hanyang.ac.kr, ORCID\/\textup{:} 0000-0003-3679-1775
}

\date{Preliminary version, \today}
\date{}

\maketitle

\noindent{\small
{\bf Abstract}. 
In this paper we study nonlocal nonlinear equations of 
fractional $(s,p)$-Laplacian type on $\mathbf{R}^n$.
We show that the irregular boundary points for
the Dirichlet problem can be divided into two
disjoint classes: semiregular and strongly irregular boundary points,
with very different behaviour. 
Two fundamental tools needed to show this are the Kellogg property (from
our previous paper) and a new removability result for 
solutions in the $V^{s,p}$ Sobolev type space,
which we deduce more generally also for 
supersolutions of
equations with a right-hand side.
Semiregular and strongly irregular points are also characterized in various ways.
Finally, it is explained how semiregularity depends on $s$ and $p$.
}

\medskip

\noindent {\small \emph{Key words and phrases}:
boundary regularity,
capacity,
fractional \p-Laplacian,
irregular boundary point,
nonlocal nonlinear Dirichlet problem,
Perron solution,
semiregular,
Sobolev solution,
strongly irregular.
}

\medskip

\noindent {\small \emph{Mathematics Subject Classification} (2020): 
Primary:
35R11. 
Secondary:  
31C15, 
31C45, 
35J66. 
}



\section{Introduction}

The aim of this paper is to study 
irregular boundary points 
for the nonlocal nonlinear equation $\LL u =0$.
The operator $\LL$ is of the form
\begin{equation*}
\mathcal{L}u(x) = 2 \,\mathrm{p.v.} \int_{\R^n} |u(x)-u(y)|^{p-2} (u(x)-u(y)) k(x, y) \,dy,
\end{equation*}
where $1<p<\infty$, $n \ge 1$ and $k: \R^n \times \R^n \to [0, \infty]$ is a symmetric measurable kernel
that satisfies the ellipticity condition
\begin{equation} \label{eq-comp-(x,y)}
\frac{\Lambda^{-1}}{|x-y|^{n+sp}} \leq k(x, y) \leq \frac{\Lambda}{|x-y|^{n+sp}},
\end{equation}
with $0<s<1$ and $\La \ge1$.
Note that $\LL$ is the fractional \p-Laplacian $(-\Delta_p)^s$ when 
$k(x,y)=|x-y|^{-n-sp}$.

Nonlocal and fractional problems have been studied for a long time,
see e.g.\ \cite{AH,BH86}, but in recent years, starting perhaps
with the pioneering paper by Caffarelli--Silvestre~\cite{CafSil},
they have received
a lot more attention (see e.g.\ our list of references).
Fractional operators also have significant applications in various contexts,
such as the obstacle problem \cite{CF13,Sil07}, 
material science \cite{Bat06,BC99}, 
phase transitions \cite{CSM05,FV11,SV09}, 
soft thin films \cite{Kur06}, quasi-geostrophic dynamics \cite{CV10} 
and image processing \cite{GO08}.

We assume in the rest of the introduction,
except for Theorem~\ref{thm-remove},
that  $\Om \subset \Rn$ is a nonempty bounded open set.
In the \emph{Dirichlet problem} one seeks 
a solution of $\LL u=0$
in $\Om$ which takes on some
prescribed exterior Dirichlet data 
$g: \Omc \to \eR:=[-\infty,\infty]$. 
Due to the nonlocal nature of the problem the Dirichlet data
have to be prescribed on the whole complement $\Omc$ of $\Om$.
For this reason, the Dirichlet problem is 
also called the exterior value problem or the complement value problem, 
but we will call it the Dirichlet problem for short.

As  in the classical setting of harmonic functions one can in general
not solve the Dirichlet problem so that the prescribed boundary values 
are taken as limits on $\bdy \Om$,
even if $g$ is bounded and continuous.
Hence, the Dirichlet problem has to be understood in a relaxed sense.
In this paper we 
use Sobolev and Perron solutions of the Dirichlet problem.
We
consider Sobolev solutions $Hg$
for Sobolev data $g$ in the fractional Sobolev type space $\VspOm$
naturally associated with the operator $\LL$,
see Theorem~\ref{thm-DP} for the precise definition.
For the nonlinear nonlocal problems $\LL u =0$ as above,
the Dirichlet problem was solved in the Sobolev sense 
by Di Castro--Kuusi--Palatucci~\cite{DCKP16}.

A boundary point $x_0 \in \bdy \Om$ is \emph{regular} 
  if 
  \[
    \lim_{\Om \ni x \to x_0} H g(x)=g(x_0)
    \quad \text{for every } g \in \VspOm \cap C(\R^n).
  \]  
Kim--Lee--Lee~\cite{KLL23} obtained the Wiener criterion for regular
boundary points.
Several
other characterizations of regular boundary points
were given in our earlier paper~\cite{BBK1}, 
see also Lindgren--Lindqvist~\cite{LL17}.

Boundary regularity can be paraphrased in the following way:
$x_0$ is 
regular if the following two conditions hold:
\begin{enumerate}
\renewcommand{\theenumi}{\textup{(\Roman{enumi})}}%
\renewcommand{\labelenumi}{\theenumi}%
\item \label{aa}
For every $g \in \VspOm \cap C(\R^n)$, the limit 
$    \lim_{\Om \ni x \to x_0} H g(x)$ exists.
\item \label{ab}
For every $g \in \VspOm \cap C(\R^n)$, there is a sequence 
$\{y_j\}_{j=1}^\infty$ such that
\begin{equation} \label{eq-strong-irr}
\Om \ni y_j \to x_0 
\quad \text{and} \quad H g(y_j) \to g(x_0),
\qquad \text{as } j \to \infty.
\end{equation}
\end{enumerate}

An irregular (i.e.\ nonregular) boundary point $x_0 \in \bdy \Om$
thus fails at least one of the two conditions~\ref{aa} and~\ref{ab}.
We say that an irregular point is \emph{semiregular} if 
\ref{aa} holds and that it is
\emph{strongly irregular} if  \ref{ab} holds.
Perhaps surprisingly it can  never happen that both
\ref{aa} and \ref{ab} fail.
This fact is part of the following main theorem in this paper.
As far as we know, our results are new also for the classical linear case $p=2$.

\begin{thm} \label{thm-trich}
\textup{(Trichotomy)}
  Let $x_0 \in \bdy \Om$.
  Then $x_0$ is either regular, semiregular or strongly irregular.
\end{thm}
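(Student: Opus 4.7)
The plan is to prove the following equivalent dichotomy: if condition~\ref{ab} fails for some $g_0 \in \VspOm \cap C(\Rn)$, then condition~\ref{aa} holds for every $g \in \VspOm \cap C(\Rn)$. Since $x_0$ is regular precisely when both~\ref{aa} and~\ref{ab} hold, this immediately rules out the fourth ``doubly irregular'' category and gives the trichotomy.

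From the failure of~\ref{ab} for $g_0$ there exist $\eps, r>0$ with $|Hg_0(y)-g_0(x_0)| \ge \eps$ for every $y \in \Om\cap B(x_0,r)$. By continuity of $Hg_0$ the sign of $Hg_0-g_0(x_0)$ is locally constant on each component of $\Om\cap B(x_0,r)$, so by replacing $g_0$ with $2g_0(x_0)-g_0$ where needed (the identity $\LL(c-u)=-\LL u$ for constants $c$ gives $H(2g_0(x_0)-g_0)=2g_0(x_0)-Hg_0$) we reduce to the case $Hg_0 \ge g_0(x_0)+\eps$ on every component of $\Om\cap B(x_0,r)$ accumulating at $x_0$. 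The function $v := Hg_0 - g_0(x_0)$ is then an $\LL$-solution in $\Om$, uniformly $\ge \eps$ inside $\Om$ near $x_0$, and with $v(x)\to 0$ as $x\to x_0$ through $\Omc$ by continuity of $g_0$; it will serve as a discontinuous barrier attached to $x_0$.

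The Kellogg property from \cite{BBK1} gives $\csp(\{x_0\}) = 0$ (the case $sp>n$ being trivial since then every boundary point is regular), and Theorem~\ref{thm-remove} then lets us treat $\{x_0\}$ as removable for $\Vsp$-solutions. Fix an arbitrary $g \in \VspOm \cap C(\Rn)$ and suppose, for contradiction, that $a := \liminf_{\Om \ni x \to x_0} Hg(x) < b := \limsup_{\Om \ni x \to x_0} Hg(x)$; choose $c \in (a,b)$. For each small $\delta>0$ we solve a modified Dirichlet problem to obtain $\LL$-sub- and supersolutions $w_\delta^\pm$ whose exterior data agrees with $g$ away from $x_0$ and with $c \mp \delta v$ near $x_0$. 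The comparison principle for $\LL$, combined with the lower bound $v\ge\eps$ inside $\Om$ near $x_0$, then sandwiches $Hg$ so that $\limsup_{\Om \ni x \to x_0}|Hg(x)-c| = O(\delta)$; letting $\delta\to 0$ forces $Hg(x)\to c$ at $x_0$, contradicting $a<b$.

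The hardest step is the comparison argument above. Because $\LL$ is nonlinear for $p\neq 2$ and nonlocal, neither sums nor scalings of solutions are solutions, so the barriers $w_\delta^\pm$ must be built via Perron's method or as solutions of suitable obstacle problems, and the nonlocal tails at $x_0$ have to be controlled carefully. Here the removability result of Theorem~\ref{thm-remove} is indispensable, as it is what legitimizes comparing Sobolev solutions with barriers that have a genuine jump across the capacity-zero set $\{x_0\}$; the component structure of $\Om$ near $x_0$ is handled by working component by component when necessary.
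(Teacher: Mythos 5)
Your proposal takes a genuinely different route from the paper, and unfortunately it has gaps that I do not see how to close.

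The paper's proof is a clean capacity dichotomy: either $\Csp(B(x_0,r)\setm\Om)=0$ for some $r>0$, in which case the removability theorem applied to $E=B(x_0,r)\setm\Om$ inside $G=\Om\cup B(x_0,r)$ shows every $Hg$ extends continuously across $x_0$ (giving \ref{aa}), or $\Csp(B(x_0,r)\setm\Om)>0$ for all $r$, in which case the Kellogg property and the exterior-ball criterion produce regular points accumulating at $x_0$, and approaching through those points gives \ref{ab}. Neither case needs a barrier or the failure of \ref{ab} as a hypothesis. Your plan instead tries to prove $\neg\ref{ab}\imp\ref{aa}$ directly by turning the ``gap'' function $v=Hg_0-g_0(x_0)$ into a barrier and then squeezing $Hg$ to a limit.

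The most serious gap is in the use of Theorem~\ref{thm-remove}. You invoke removability of $\{x_0\}$, but that theorem requires $E\subset\Om$ to be \emph{relatively closed inside an open set} containing it, and $\{x_0\}\subset\bdy\Om$ is not in $\Om$. To ``remove'' $x_0$ you would need $\Om\cup\{x_0\}$ to be open, i.e.\ $x_0$ isolated in $\Omc$, which is exactly the easy punctured-ball case. The substance of the paper's argument is establishing that the whole set $B(x_0,r)\setm\Om$ has zero capacity (not just $\{x_0\}$), and your proposal never establishes this, so the removability theorem is not applicable in the way you use it. Note also that $\Csp(\{x_0\})=0$ follows from $sp\le n$ via Lemma~\ref{lem-sp<=n}, not from the Kellogg property per se (although the conclusion is the same once one knows $x_0$ is irregular).

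Two further issues. First, the reduction to $Hg_0\ge g_0(x_0)+\eps$ on all components of $\Om\cap B(x_0,r)$ does not work: the sign of $Hg_0-g_0(x_0)$ can differ from component to component, and replacing $g_0$ by $2g_0(x_0)-g_0$ flips the sign on every component simultaneously, so it cannot align them. Second, the comparison step is only a plan: for $p\ne 2$ the function $c\mp\delta v$ is neither a (super/sub)solution nor admissible boundary data making the comparison principle (Theorem~\ref{thm-comparison}) applicable, since that principle requires $(g_1-g_2)_\limplus\in\Vspo(\Om)$ rather than pointwise inequalities near $x_0$, and the nonlocal tail of $v$ away from $x_0$ contributes to the energy in a way the sketch does not control. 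You acknowledge these difficulties, but they are not incidental; as written, the barrier step does not go through, and I do not see a fix that avoids first proving the capacity-zero property of $B(x_0,r)\setm\Om$ — at which point the paper's much shorter removability argument already finishes the proof.
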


As a direct consequence of the proof we obtain the following
result.
We refer the reader to Section~\ref{sect-cap}
for the definition of the Sobolev capacity $\Csp$.

\begin{prop} \label{prop-S-largest}
The set $S$ of semiregular points  is the largest relatively open subset
of $\bdy \Om \setm \bdy \clOm$ with zero $\Csp$ capacity.
It is also the largest subset of $\Omc$ with zero $\Csp$ capacity
such that $\Om \cup S$ is open.
Moreover,
\begin{align} \label{eq-S}
	S &= \{x\in\bdy \Om :
		\text{there is $r>0$ such that $\Csp(B(x,r)\setm \Om)=0$}\} \\
	 &= \{x\in\bdy \Om \setm \bdy \clOm:
		\text{there is $r>0$ such that $\Csp(B(x,r)\cap\bdy\Omega)=0$}\}.
\nonumber
\end{align}
\end{prop}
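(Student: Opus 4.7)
The plan is to reduce Proposition~\ref{prop-S-largest} to the identification
\[
S = T := \{x \in \bdy\Om : \Csp(B(x,r)\setm\Om) = 0 \text{ for some } r > 0\},
\]
which I expect to fall out of the proof of Theorem~\ref{thm-trich} rather than only its statement. Given $S = T$, the remaining assertions of the proposition are elementary consequences of capacity theory and point-set topology.

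For $T \subset S$, the key tool is the removability result Theorem~\ref{thm-remove}. Given $x_0 \in T$ with witness radius $r$, the set $E := B(x_0,r)\setm\Om$ has zero $\Csp$-capacity and in particular empty interior; this forces $E \subset \bdy\Om$ (otherwise an open ball inside $\Om^c \cap B(x_0,r) \subset E$ would give $\Csp(E) > 0$) and makes $\Om' := \Om \cup E$ open with $x_0$ an interior point. By Theorem~\ref{thm-remove} every Sobolev solution $Hg$ on $\Om$ extends to a solution $\tilde Hg$ on $\Om'$, so by interior continuity $\lim_{\Om \ni x \to x_0} Hg(x) = \tilde Hg(x_0)$ exists for every $g \in \VspOm \cap C(\R^n)$. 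To verify irregularity I pick a continuous $g$ with $g(x_0) = 1$ and $g \equiv 0$ outside $B(x_0,r)$; since $(\Om')^c = \Om^c \setm B(x_0,r)$, the extended boundary data vanishes, so $\tilde Hg \equiv 0$ and the limit $0$ differs from $g(x_0) = 1$.

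The converse inclusion $S \subset T$ is the nontrivial direction and the main obstacle. Direct appeal to the Kellogg property of~\cite{BBK1} only yields $\Csp(S) = 0$, which is weaker than the ball-wise condition defining $T$. To sharpen it I would argue by contradiction: if some $x_0 \in S$ lay outside $T$, so that every ball $B(x_0,\rho)$ met $\Om^c$ in a set of positive capacity, then by Kellogg applied inside $B(x_0,\rho)$ such a ball would also contain regular boundary points, and combining boundary data constructed near such a regular point with the comparison principle for Sobolev solutions should contradict the existence of $\lim_{\Om \ni x \to x_0} Hg(x)$ for a suitable $g$, forcing $x_0 \in T$.

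From $S = T$ the remaining assertions are mostly formal. The second characterization in~\eqref{eq-S} follows because a capacity-zero set has empty interior: $\Csp(B(x,r)\setm\Om) = 0$ implies $B(x,r)\cap\Om$ is dense in $B(x,r)$, hence $B(x,r)\subset\clOm$, which places $x$ in $\bdy\Om\setm\bdy\clOm$ and reduces $B(x,r)\setm\Om$ to $B(x,r)\cap\bdy\Om$. The bound $\Csp(S) = 0$ follows by Lindel\"of covering of $S$ by witness balls and countable subadditivity. For openness of $\Om \cup S$, each $y \in B(x_0,r)\setm\Om$ lies in $\bdy\Om$ and inherits, by monotonicity of capacity, the same zero-capacity witness, so $y \in S$ and $B(x_0,r) \subset \Om \cup S$. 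For maximality among relatively open subsets of $\bdy\Om\setm\bdy\clOm$ of zero capacity, given such $A$ and $x_0 \in A$, I choose $r$ with $B(x_0,r) \subset \clOm$ and $B(x_0,r) \cap (\bdy\Om\setm\bdy\clOm) \subset A$, so $B(x_0,r) \setm\Om \subset A$ has zero capacity and $x_0 \in S$; the second maximality claim is analogous, with openness of $\Om \cup A$ directly providing $B(x_0,r)\setm\Om \subset A$.
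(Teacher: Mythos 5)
Your overall plan---identify $S$ with the ball condition $T$ and then deduce the remaining assertions by elementary capacity and point-set arguments---matches the paper's; the paper derives $S=T$ directly from Cases 1 and 2 in the proof of Theorem~\ref{thm-trich}, exactly as you anticipate. Your elementary deductions (the second identity in \eqref{eq-S}, $\Csp(S)=0$ by countable covering and subadditivity, openness of $\Om\cup S$, and the two maximality statements) are correct and essentially coincide with the paper's.

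The genuine gap is in your sketch of $S\subset T$, the hard direction. You assert that when $\Csp(B(x_0,\rho)\setm\Om)>0$ for all $\rho$, the Kellogg property produces regular boundary points in each $B(x_0,\rho)$; but this does not follow, since $\Csp(B(x_0,\rho)\setm\Om)>0$ does not imply $\Csp(B(x_0,\rho)\cap\bdy\Om)>0$---the positive capacity may sit entirely on $B(x_0,\rho)\setm\clOm$, where the Kellogg property says nothing. The paper treats this alternative separately via an exterior-ball argument and the Wiener criterion. You also leave the crucial final step---a datum $g$ for which $\lim_{\Om\ni x\to x_0}Hg(x)$ fails to exist---entirely as a promissory note (``should contradict''). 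The paper avoids this construction altogether in the proof of Theorem~\ref{thm-trich}: in Case 2 it establishes condition \ref{ab}, i.e.\ that some sequence $y_j\to x_0$ has $Hg(y_j)\to g(x_0)$, and semiregularity is then excluded by the disjointness of the three classes rather than by exhibiting a divergent $Hg$. That is the shorter route and the one you should use. (A direct construction of a $g$ with non-existent limit does appear later, in the proof of Theorem~\ref{thm-semi-d}\ref{d-notreg}$\imp$\ref{d-liminf}, but it needs disjointly supported bump functions, the fractional Poincar\'e inequality \eqref{eq-frac-PI}, and the strengthened regularity characterization of Theorem~\ref{thm-main-reg}\ref{a-contx0}, none of which your sketch invokes.)

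Your argument for $T\subset S$ via removability is sound, but you should take $g$ supported in a strictly smaller ball $B(x_0,r')\Subset B(x_0,r)\subset\Om'$; only then is $g\in\Vspo(\Om')$ automatic and $H_{\Om'}g\equiv 0$ immediate from uniqueness. The paper instead concludes irregularity directly from the Wiener criterion, observing that $\csp(\itoverline{B(x_0,\rho)}\setm\Om,B(x_0,2\rho))=0$ for $\rho<r$ makes the Wiener integral \eqref{eq-Wiener-w-csp} finite. Either route works; the Wiener argument is shorter and avoids the $\Vspo$ bookkeeping.
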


A direct consequence is that semiregularity is a local property of $\Om$,
and since regularity is a local property (by the Wiener criterion), so
is also strong irregularity.

It also follows that $S$ is removable in the sense
of Theorems~\ref{thm-remove} and~\ref{thm-remove-bdd},
with $E$ and $\Om \setm E$ therein replaced by  $S$ and $\Om$.
It follows from Remark~\ref{rmk-remove-sharp} that $S$ is the
largest such removable set, at least when the right-hand side $f$ is $0$
in Theorem~\ref{thm-remove}.

If $sp>n$, then all boundary points are regular (see Corollary~\ref{cor-sp>n=>reg}).
On the contrary, when $sp \le n$, the punctured ball (together with 
Proposition~\ref{prop-S-largest}) shows that there are semiregular boundary points.
Example~\ref{ex-strong-irr} 
exhibits
strongly irregular boundary points when $sp \le n$.

In the definition~\ref{ab} of strongly irregular points, 
the sequence $\{y_j\}_{j=1}^\infty$ is allowed to depend on $g$.
However, it is perhaps a bit surprising
 that there is a universal sequence $\{y_j\}_{j=1}^\infty$
such that \eqref{eq-strong-irr} holds 
for all \emph{bounded} $g \in \VspOm \cap C(\R^n)$,
see Theorem~\ref{thm-univ-seq}.
When $p=2$, this is true also for
\emph{unbounded} $g \in \VstOm \cap C(\R^n)$,
see Theorem~\ref{thm-str-irr-linear}.
(In the nonlinear case we do not know 
if this holds for unbounded $g$.)

Other characterizations
of strongly irregular  and semiregular boundary points
are proved in Sections~\ref{sect-trich} and~\ref{sect-strong-irr}.
In particular, we show that (semi)regularity 
and strong irregularity
can equivalently be defined using Perron solutions and 
can conveniently be tested using only
one function in the following way.
Theorem~\ref{thm-str-irr-intro} follows from
Theorems~\ref{thm-main-reg}, \ref{thm-semi-d} and~\ref{thm-str-irr}.
Part~\ref{dx0-reg} is from 
our earlier paper~\cite[Theorem~1.7]{BBK1}.

\begin{thm}\label{thm-str-irr-intro}
 Let $x_0 \in \bdy \Om$  and $d_{x_0}=\min\{1,|x-x_0|\}$.
 Then 
 \begin{enumerate}
\item \label{dx0-reg}
$x_0$ is regular if and only if 
\begin{equation*}
    \lim_{\Om \ni x \to x_0} H d_{x_0}(x) =0.
\end{equation*}
\item 
$x_0$ is strongly irregular if and only if 
the limit
\begin{equation*}
    \lim_{\Om \ni x \to x_0} H d_{x_0}(x) 
    \quad \text{does not exist}.
\end{equation*}
\item $x_0$ is semiregular if and only if 
\begin{equation*}
    \liminf_{\Om \ni x \to x_0} H d_{x_0}(x) >0.
\end{equation*}
\end{enumerate}
Moreover, the set of strongly irregular points is exactly the set $\itoverline{R} \setm R$,
where $R$ is the set of all regular boundary points.
 \end{thm}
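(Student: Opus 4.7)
The strategy is to exploit Theorem~\ref{thm-trich} together with the nonnegativity of $Hd_{x_0}$. Since $d_{x_0}$ is Lipschitz and bounded, it lies in $\VspOm\cap C(\Rn)$, and $d_{x_0}\ge 0$ gives $Hd_{x_0}\ge 0$ in $\Om$ by comparison; moreover $d_{x_0}(x_0)=0$. The possible asymptotic behaviours of the nonnegative function $Hd_{x_0}$ at $x_0$ therefore split exclusively into three cases: the limit exists and equals $0$, the limit exists and is positive, or the limit does not exist. Since the trichotomy partitions $\bdy\Om$ into three disjoint classes, it suffices to verify the forward implication in each of the three parts of the theorem; the converses then drop out by elimination.

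Part~\ref{dx0-reg} is taken from~\cite{BBK1}. For part~(c), if $x_0$ is semiregular then condition~\ref{aa} applied to $g=d_{x_0}$ makes $L:=\lim_{\Om\ni x\to x_0}Hd_{x_0}(x)$ exist; since $L=0$ would force regularity via~\ref{dx0-reg}, we must have $L>0$. For part~(b), if $x_0$ is strongly irregular then~\ref{ab} supplies a sequence $\Om\ni y_j\to x_0$ with $Hd_{x_0}(y_j)\to d_{x_0}(x_0)=0$, so $\liminf Hd_{x_0}=0$; existence of a full limit would then again trigger~\ref{dx0-reg} and contradict strong irregularity.

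For the ``moreover'' statement, I first upgrade the first equality in~\eqref{eq-S} to the assertion that \emph{$S$ is relatively open in the whole of $\bdy\Om$}: if $\Csp(B(x_0,r)\setm\Om)=0$ and $y\in\bdy\Om\cap B(x_0,r/2)$, then $B(y,r/2)\subset B(x_0,r)$ forces $\Csp(B(y,r/2)\setm\Om)=0$, so $y\in S$. This immediately gives one direction: if $x_0\in\itoverline R\setm R$ then $x_0$ is irregular and cannot be semiregular either (otherwise a neighbourhood of $x_0$ in $\bdy\Om$ would lie entirely in $S$, preventing regular points from approaching $x_0$), so by the trichotomy $x_0$ is strongly irregular.

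The reverse inclusion is the main technical obstacle. Suppose $x_0$ is strongly irregular and, for contradiction, that $B(x_0,r_0)\cap R=\emptyset$ for some $r_0>0$. Every boundary point in this ball is then irregular, and the Kellogg property yields $\Csp(B(x_0,r_0)\cap\bdy\Om)=0$. If $x_0\in\bdy\Om\setm\bdy\clOm$, then shrinking $r_0$ gives $B(x_0,r_0)\subset\clOm$, so $B(x_0,r_0)\setm\Om=B(x_0,r_0)\cap\bdy\Om$ has zero capacity and~\eqref{eq-S} declares $x_0\in S$, a contradiction. Otherwise $x_0\in\bdy\clOm$ and $\operatorname{int}(\Omc)$ accumulates at $x_0$; choosing $y\in\operatorname{int}(\Omc)\cap B(x_0,r_0/3)$ and letting $z\in\bdy\Om$ minimise $|z-y|$, the ball $B(y,|y-z|)\subset\Omc$ provides an exterior ball at $z$, which by the Wiener criterion of~\cite{KLL23} makes $z$ regular, while $|z-x_0|\le 2|y-x_0|<r_0$ contradicts $B(x_0,r_0)\cap R=\emptyset$.
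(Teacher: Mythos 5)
Your proof is correct and rests on the same core ingredients the paper uses (Theorem~\ref{thm-trich}, Proposition~\ref{prop-S-largest}, the Kellogg property, the Wiener criterion via exterior balls, and the characterization of regularity by $d_{x_0}$ from~\cite{BBK1}), but you organize them more directly than the paper does. The paper obtains Theorem~\ref{thm-str-irr-intro} as a corollary of the longer characterization theorems~\ref{thm-main-reg}, \ref{thm-semi-d} and~\ref{thm-str-irr}, whose proofs cycle through auxiliary conditions (for instance Theorem~\ref{thm-semi-d}\ref{d-notreg}, about all $g\in\VspOm$, which is more than is needed here). Your elimination argument --- that $Hd_{x_0}\ge 0$ has exactly one of the three mutually exclusive behaviours (zero limit, positive limit, no limit), so the three forward implications automatically yield the converses --- is a clean shortcut not spelled out in the paper, and your treatment of the ``moreover'' part is self-contained: the relative openness of $S$ in $\bdy\Om$ (a harmless reformulation of Proposition~\ref{prop-S-largest}, since $\bdy\Om\setm\bdy\clOm$ is itself relatively open in $\bdy\Om$) handles one direction, and for the other you reproduce the exterior-ball-plus-Kellogg construction from Case~2 of the proof of Theorem~\ref{thm-trich} rather than quoting the equivalence chain \ref{d-liminf}$\Rightarrow$\ref{d-R}$\Rightarrow$\ref{d-semi} in Theorem~\ref{thm-semi-d}. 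One minor wording issue: the three stated conditions in the theorem ($\lim=0$, no limit, $\liminf>0$) are not themselves mutually exclusive, so the blanket ``converses drop out by elimination'' is slightly loose; what makes the argument go through is the strengthened forward conclusion you actually prove in part~(b), namely that strong irregularity forces $\liminf Hd_{x_0}=0$, which is what rules out $\liminf>0$ for strongly irregular points. Both routes use the same key lemmas; yours is shorter because it proves only what this particular statement requires.
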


There are two main ingredients needed to prove Theorem~\ref{thm-trich}:
The Kellogg property (Theorem~\ref{thm-kellogg})
from our earlier paper~\cite{BBK1}, 
which says that the set
of irregular boundary points has $\Csp$-capacity zero,
and the following new removability result.

\begin{thm} \label{thm-remove}
Assume that $\Om$ is a nonempty open set, which may be unbounded.
Let $E\subset\Om$ be a relatively closed set with $\Csp(E)=0$.
Assume that $u\in \Vsp(\Omega \setminus E)$ is a 
solution\/ \textup(resp.\ supersolution\/\textup)
of $\LL u=f$ in $\Om\setm E$ with $f\in \Vsp_0(\Om)^*$.
Then $u$ is a 
\textup(super\/\textup)solution
of $\LL u=f$ in $\Om$.
\end{thm}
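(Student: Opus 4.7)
\textbf{Plan for the proof of Theorem~\ref{thm-remove}.}
The argument has the classical flavour of capacity-based removability results, adapted to the nonlocal nonlinear setting. Since $\Csp(E)=0$ implies that $E$ has Lebesgue measure zero, the function $u$ on $\Om\setm E$ is defined almost everywhere in $\Om$ and, by redefining it on $E$ in any measurable way (for instance by $0$), one obtains a representative whose Gagliardo double-integral and nonlocal tail contributions are unchanged. Consequently $u \in \Vsp(\Om)$, and it only remains to verify the variational (in)equality
\begin{equation*}
\E(u,\phi) \;=\; \langle f, \phi\rangle
\qquad \text{(resp.\ } \ge \text{)}
\end{equation*}
for every test function $\phi\in\Vspo(\Om)$ (nonnegative in the supersolution case), where $\E$ denotes the $(s,p)$-energy form associated to $\LL$.

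The next step is to exploit $\Csp(E)=0$ to produce cutoffs. By the definition of capacity (see Section~\ref{sect-cap}), one can choose $\eta_j\in\Vspo(\Om)$ with $0\le\eta_j\le1$, each equal to $1$ on an open neighbourhood of $E$, and satisfying $\|\eta_j\|_{\Vsp}\to 0$ (and after passing to a subsequence $\eta_j\to 0$ a.e.). For a given test function $\phi$ consider
\begin{equation*}
\phi_j := \phi\,(1-\eta_j).
\end{equation*}
Then $\phi_j$ vanishes on $\Omc$ and on a neighbourhood of $E$, so $\phi_j\in\Vspo(\Om\setm E)$; moreover $\phi_j\ge 0$ whenever $\phi\ge 0$. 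Hence $\phi_j$ is admissible in the equation for $u$ on $\Om\setm E$, yielding
$\E(u,\phi_j)=\langle f,\phi_j\rangle$ (or $\ge$ for supersolutions). Writing $\phi_j=\phi-\phi\eta_j$ and using the linearity of $\E$ in its second argument, the goal reduces to showing
\begin{equation*}
\E(u,\phi\eta_j)\to 0
\qquad\text{and}\qquad
\langle f,\phi\eta_j\rangle\to 0
\qquad\text{as } j\to\infty.
\end{equation*}

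By density one may first reduce to bounded $\phi$ (truncation plus approximation by $\Lipc$ functions). The convergence of the dual pairing then follows once we show $\phi\eta_j\to 0$ in $\Vspo(\Om)$. For the energy term, Hölder's inequality applied to the double integral defining $\E$ gives
\begin{equation*}
|\E(u,\phi\eta_j)| \;\le\; C\,[u]_{\Wsp(\R^n)}^{p-1}\,[\phi\eta_j]_{\Wsp(\R^n)}\,,
\end{equation*}
and together with the $L^\infty$-bound on $u$ times an $L^{p-1}$-tail term, this reduces matters to showing $\|\phi\eta_j\|_{\Vsp}\to 0$. The latter rests on the fractional Leibniz-type estimate
\begin{equation*}
|\phi(x)\eta_j(x)-\phi(y)\eta_j(y)|
\;\le\; \|\phi\|_\infty\,|\eta_j(x)-\eta_j(y)|+|\eta_j(y)|\,|\phi(x)-\phi(y)|,
\end{equation*}
which upon taking the $\Wsp$-seminorm gives $[\phi\eta_j]_{\Wsp}\le \|\phi\|_\infty[\eta_j]_{\Wsp}+$ a remainder of the form $\int\!\!\int |\eta_j(y)|^p|\phi(x)-\phi(y)|^p k(x,y)\,dx\,dy$. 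The first term vanishes because $\eta_j\to 0$ in $\Vsp$; the second vanishes by dominated convergence since $\eta_j\to 0$ a.e.\ and the integrand is bounded by the $\Wsp$-energy of $\phi$.

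The main technical obstacle is precisely this last step: producing a product-rule bound for $\Vsp$ that is uniform in $j$ and letting the two contributions (one from the seminorm of $\eta_j$, one from the measure-concentration of $\eta_j$ near $E$) be controlled separately, while simultaneously handling the nonlocal tail and the nonlinear factor $|u(x)-u(y)|^{p-2}$. Once these estimates are in place, passing to the limit in $\E(u,\phi_j)=\langle f,\phi_j\rangle$ yields the desired identity (resp.\ inequality) for $\phi$, completing the proof for both solutions and supersolutions.
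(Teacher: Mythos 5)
Your proposal follows essentially the same route as the paper: truncate the test function $\phi$ by $1-\eta_j$, where $\eta_j$ are small-capacity cutoffs equal to $1$ near $E$ with $\|\eta_j\|_{\Wsp}\to 0$ and $\eta_j\to 0$ a.e., feed $\phi_j=\phi(1-\eta_j)$ into the weak formulation on $\Om\setm E$, and pass to the limit using a fractional Leibniz-type splitting of $[\phi\eta_j]$ together with dominated convergence. This is exactly the paper's argument, and the paper also restricts attention to $\phi\in C^\infty_c(\Om)$, making your extra reduction to bounded $\phi$ unnecessary.

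There is, however, a genuine slip in the energy estimate. You write
\[
|\E(u,\phi\eta_j)| \le C\,[u]_{\Wsp(\Rn)}^{p-1}\,[\phi\eta_j]_{\Wsp(\Rn)},
\]
but $u\in\Vsp(\Om\setm E)=\Vsp(\Om)$ does \emph{not} imply $[u]_{\Wsp(\Rn)}<\infty$: the $\Vsp(\Om)$ seminorm only controls pairs $(x,y)\in\Om\times\Rn$, not $(x,y)\in\Omc\times\Omc$. The subsequent appeal to ``the $L^\infty$-bound on $u$ times an $L^{p-1}$-tail term'' to repair this does not help, because $u$ is \emph{not} assumed bounded. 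The correct fix, which the paper uses, is to note that $\phi\eta_j\equiv 0$ outside $\Om$, so the integrand of $\E(u,\phi\eta_j)$ vanishes on $\Omc\times\Omc$; by symmetry and H\"older one then gets
\[
|\E(u,\phi\eta_j)| \le 2\La\,[u]_{\Vsp(\Om)}^{p-1}\,[\phi\eta_j]_{\Wsp(\Rn)},
\]
with the finite seminorm $[u]_{\Vsp(\Om)}$ in place of $[u]_{\Wsp(\Rn)}$, and no boundedness of $u$ or tail estimate is needed. One further small point: when $E$ (or $\Om$) is unbounded it is cleanest to take the cutoffs relative to the compact set $E\cap\supp\phi$ rather than to $E$ itself, as the paper does; Proposition~\ref{prop-Wsp-Csp} would let you cut off near all of $E$, but the compact version is more transparent and directly yields cutoffs in $C^\infty_c(\Rn)$.
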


In order to prove Theorem~\ref{thm-trich}
it is enough to have the removability result for solutions
of $\LL u=0$.
We deduce it in this more general form for supersolutions and with
right-hand side since this may be of independent interest
and the proof is essentially the same.
In Remark~\ref{rmk-remove-sharp} we show that this result is sharp
at least when~$\Om$ is bounded and the right-hand side is $f\equiv 0$.

For linear local equations
in an abstract setting, the trichotomy was developed
in detail by Luke\v{s}--Mal\'y~\cite{lukesmaly}.
For the local nonlinear \p-Laplace equation $\Delta_pu=0$,
the trichotomy as 
in Theorem~\ref{thm-trich} was proved by Bj\"orn~\cite{ABclass}.
Removability results as in Theorem~\ref{thm-remove}
for such equations are a classical topic and 
can be found already in 
Serrin's pioneering paper~\cite[Chapter~II]{Ser} from 1964.
See also Heinonen--Kilpel\"ainen--Martio~\cite[Theorems~7.35 and~7.36]{HeKiMa}
for removability results for weighted degenerate and singular nonlinear equations.

The following is the outline of the paper.
In Section~\ref{sec-sobolev} we recall some function spaces
and the notion of (weak) solutions of $\LL u=f$
with right-hand sides $f$. 
In Section~\ref{sect-cap}, 
we also recall the definition of the (Sobolev) capacity $\Csp$
(as well as the condenser capacity $\csp$ 
which we need in our analysis), 
prove its countable subadditivity and
review several known properties.

In Section~\ref{sect-rem} we turn to removable singularities
and prove Theorem~\ref{thm-remove} and discuss
its sharpness in Remark~\ref{rmk-remove-sharp}.

In Section~\ref{sect-DP}, we summarize the results
on the Dirichlet problem and regular boundary points,
which provide tools for the developments in the sequel.
In Section~\ref{sect-trich} we turn to the main topic of the paper,
the trichotomy classification mentioned above. 
Theorem~\ref{thm-trich} is proved here
and many characterizations of semiregular boundary points are given.
In Section~\ref{sect-strong-irr} we characterize
strongly irregular boundary
points in several ways.

We end the paper, in Section~\ref{sect-different-sp},
with deducing the following result
on   how semiregularity varies with $s$ and $p$.
Here $s=1$ is included, which relates the results  for nonlocal equations
to those for local equations.

\begin{prop} \label{prop-semireg-inclusion-intro}
Consider $0 < s_j\le1 < p_j$, $j=1,2$.
Then the
implication
\begin{equation*}
\text{$x_0$ is semiregular for $(s_1,p_1)$}
\imp
\text{$x_0$ is semiregular for $(s_2,p_2)$}
\end{equation*}
holds, for all bounded open sets $\Om$ with $x_0 \in \bdy \Om$,
if and only if
any of the following  mutually disjoint cases holds\/\textup{:}
\begin{enumerate}
\item
  $s_1 p_1 > n$,
\item
  $s_2 p_2 = s_1 p_1\le n$ and $p_1 \le p_2$,
\item
  $s_2 p_2 < s_1 p_1 \le n$.
\end{enumerate}  
\end{prop}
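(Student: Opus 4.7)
The plan is to convert semiregularity into a capacity statement via Proposition~\ref{prop-S-largest}: $x_0$ is semiregular for $(s,p)$ if and only if there exists $r>0$ with $C_{s,p}(B(x_0,r)\setm\Om)=0$. The desired implication therefore reduces to a question about the two capacities alone---namely, when is every $C_{s_1,p_1}$-null set also $C_{s_2,p_2}$-null (locally near $x_0$)?

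For the \emph{if} direction, I would treat the three cases separately. Case~(a), $s_1p_1>n$, is vacuous since Corollary~\ref{cor-sp>n=>reg} guarantees that no boundary point is irregular, hence none is semiregular, for $(s_1,p_1)$. Case~(c), $s_2p_2<s_1p_1\le n$, should follow from a direct capacity comparison: starting from an admissible $V^{s_1,p_1}$ test function for $C_{s_1,p_1}(E)$ with small norm, one bounds its $V^{s_2,p_2}$-norm by H\"older's inequality in the Gagliardo double integral, exploiting the compactness of the support to absorb the exponent change. Case~(b), $s_1p_1=s_2p_2\le n$ with $p_1\le p_2$, is the most delicate, as no direct embedding between the $V^{s_i,p_i}$-spaces is available at fixed scaling; I would instead appeal to the Adams--Hedberg \cite{AH} characterization of zero $(s,p)$-capacity at the critical Hausdorff dimension $n-s_1p_1$, which yields the stated asymmetric implication under $p_1\le p_2$.

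For the \emph{only if} direction I argue by contraposition: if none of~(a),~(b),~(c) holds, then either $s_1p_1\le n$ and $s_2p_2>s_1p_1$, or $s_1p_1=s_2p_2\le n$ and $p_1>p_2$. In each subcase, the plan is to exhibit a compact set $E\ni x_0$ with $C_{s_1,p_1}(E)=0<C_{s_2,p_2}(E)$ and take $\Om=B(x_0,R)\setm E$; Proposition~\ref{prop-S-largest} then gives $x_0$ semiregular for $(s_1,p_1)$ but not for $(s_2,p_2)$. For the first subcase a lower-dimensional smooth submanifold through $x_0$---or simply $E=\{x_0\}$ when $s_1p_1\le n<s_2p_2$---does the job via standard Hausdorff--capacity comparisons. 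The second subcase requires a self-similar Cantor-type construction at the critical dimension $n-s_1p_1$ whose $C_{s,p}$-capacity vanishes for the larger $p_1$ but is positive for the smaller $p_2$.

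The main obstacle will be case~(b) in both directions, since the two capacities share the same critical dimension and cannot be separated by standard function-space embeddings. Both the positive implication and the sharpness example depend on the fine $p$-dependence of capacities at critical scaling, and I expect to lean on potential-theoretic tools from Adams--Hedberg together with Cantor-set capacity estimates matched to the prescribed asymmetry between $p_1$ and $p_2$.
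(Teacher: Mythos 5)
Your reduction of semiregularity to a capacity statement via Proposition~\ref{prop-S-largest} is the right starting point, and you correctly identify both the vacuity of case~(a) and the role of Adams--Hedberg machinery for case~(b). However, there are three genuine gaps.

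First, for case~(c) you propose a direct capacity comparison by H\"older's inequality in the Gagliardo double integral. This does not work: the function-space embedding $W^{s_1,p_1}\hookrightarrow W^{s_2,p_2}$ (even locally) requires $s_1-n/p_1\ge s_2-n/p_2$, which is \emph{not} implied by $s_2p_2<s_1p_1$. Concretely, with $n=2$, $(s_1,p_1)=(1/2,2)$ and $(s_2,p_2)=(1/5,4)$ one has $s_2p_2=4/5<1=s_1p_1$ yet $s_1-n/p_1=-1/2<-3/10=s_2-n/p_2$, so there is no Sobolev embedding to exploit. The inclusion of null sets under $s_2p_2<s_1p_1$ is a genuinely finer statement than a norm embedding, and the paper obtains it (uniformly for cases~(b) and~(c)) from the Adams--Meyers theorem (Theorem~\ref{thm-Adams-Meyers}).

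Second, for the converse you construct $E\ni x_0$ with $C_{s_1,p_1}(E)=0<C_{s_2,p_2}(E)$ and claim this makes $x_0$ non-semiregular for $(s_2,p_2)$. But $C_{s_2,p_2}(E)>0$ alone does not give $C_{s_2,p_2}(E\cap B(x_0,r))>0$ for all small $r$, which is what Proposition~\ref{prop-S-largest} actually requires; the positive capacity could sit far from $x_0$, or $x_0$ could be an isolated point of $E$. Your self-similar Cantor construction can plausibly be arranged to have uniformly positive local capacity, but you would need to verify this. The paper sidesteps the issue entirely: it takes the compact set $K$ supplied by Adams--Meyers, forms $\Om=B\setm K$, observes every point of $K$ is semiregular for $(s_1,p_1)$, and then invokes the Kellogg property (Theorem~\ref{thm-kellogg}) to produce \emph{some} $x\in K$ that is regular for $(s_2,p_2)$; by translation invariance this defeats the implication. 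This is both shorter and avoids any local-capacity estimate.

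Third, the statement allows $s_j=1$, for which Proposition~\ref{prop-S-largest} does not apply (it is proved under $0<s<1$); the local case needs Proposition~\ref{prop-semi-s=1} from Bj\"orn's earlier work, together with the Kellogg property and the fact that points have zero $C_{1,p}$-capacity iff $p\le n$. Your proposal does not address this endpoint.
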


\begin{ack}
A.~B. and J.~B. were supported by the Swedish Research Council,
  grants 2020-04011 and 2022-04048, respectively.
MK was supported 
by the National Research Foundation of Korea (NRF) grant
funded by the Korean government (MSIT) (RS-2023-00252297).
\end{ack}

\section{Nonlocal equations and Sobolev spaces}\label{sec-sobolev}

\emph{Throughout the paper, we assume that  $1<p<\infty$, 
that $\Om \subset \Rn$
is a nonempty  open set, $n \ge 1$, and that $k$
 is a symmetric measurable kernel satisfying \eqref{eq-comp-(x,y)}.
In Sections~\ref{sect-cap}
and \ref{sect-DP}--\ref{sect-different-sp},
$\Om$ will also be assumed to be bounded.
Except for Section~\ref{sect-different-sp}, we 
assume that
$0<s<1$ throughout the paper.
}

\medskip

In order to consider weak solutions of the equation
\begin{equation}\label{eq-Lu=0}
\LL u=0 \quad\text{in}~\Om,
\end{equation}
we need 
suitable function spaces.

For a measurable function $u: \Om \to \eR:=[-\infty,\infty]$ (which is
finite a.e.) we consider   
the fractional seminorm  
\begin{equation*}
  [u]_{\Wsp(\Om)}=
\biggl(  \int_{\Omega}\int_{\Omega} \frac{|u(x)-u(y)|^p}{|x-y|^{n+s p}} 
      \, dy\, dx\biggr)^{1/p}. 
\end{equation*}
The fractional Sobolev space $\Wsp(\Omega)$ consists of the functions $u$ such that
the norm  
\begin{equation*}
\|u\|_{\Wsp(\Om)}^p:=\|u\|_{L^p(\Om)}^p+[u]_{\Wsp(\Om)}^p < \infty.
\end{equation*}
The above spaces and (semi)norms go under various names, 
such as fractional Sobolev, 
Gagliardo--Nirenberg, Sobolev--Slobodetski\u{\i} and Besov.

By $W^{s, p}_{\mathrm{loc}}(\Omega)$ we denote the space of functions that belong to $W^{s, p}(G)$ 
for every open $G \Subset \Omega$. 
As usual, by $E \Subset \Om$ we mean that $\itoverline{E}$
is a compact subset of $\Om$.
We refer the reader to Di Nezza--Palatucci--Valdinoci~\cite{DNPV12} for properties of these spaces.

As \eqref{eq-Lu=0} is a nonlocal equation, we 
need a function space that captures integrability of
functions in the whole of $\Rn$.
The \emph{tail space} $L^{p-1}_{sp}(\R^n)$ is given by
\begin{equation*}
L^{p-1}_{sp}(\R^n) = \biggl\lbrace u \text{ measurable}: 
  \int_{\R^n} \frac{|u(y)|^{p-1}}{(1+|y|)^{n+sp}} \,dy < \infty \biggr\rbrace,
\end{equation*}
see Kassmann~\cite{Kass11}  (for $p=2$) and Di Castro--Kuusi--Palatucci~\cite{DCKP16}.
For measurable functions $u, v: \R^n \to \eR$ 
we define the quantity
\begin{equation*}
\E(u,v)=\int_{\R^n} \int_{\R^n} |u(x)-u(y)|^{p-2} (u(x)-u(y))(v(x)-v(y)) k(x, y) \,dy\,dx,
\end{equation*}
provided that it is finite. Note that $\E(u, v)$ is well defined for 
$u \in \Wsp_{\mathrm{loc}}(\Om) \cap L^{p-1}_{sp}(\R^n)$ and $v \in C_c^\infty(\Om)$
and that it is linear in the second argument.
Here and later, $C_c^\infty(\Om)$ denotes
the space of $C^\infty$ functions with compact support in $\Om$.
Similarly, $\Lipc(\Om)$ denotes the space of Lipschitz 
functions with compact support in $\Om$.
The \emph{support} of a function $u$ is 
$\supp u =\overline{\{x:u(x)\ne0\}}$.
As usual, we let $u_\limplus=\max\{u,0\}$ and $u_\limminus=\max\{-u,0\}$.

We use the following definition of solutions and supersolutions of \eqref{eq-Lu=0}, 
which is standard by now.

\begin{deff} \label{def-supersol}
Let $u \in W^{s, p}_{\mathrm{loc}}(\Om) \cap L^{p-1}_{sp}(\R^n)$
and let $f$ be a measurable function in $\Omega$. 
Then $u$ is a (weak) \emph{solution} (resp.\ \emph{sub/supersolution}) 
of $\LL u=f$ in $\Om$ 
if $\langle f, \varphi \rangle:= \int_{\Omega}f(x) \varphi(x) \,dx$ is well defined and
\begin{equation} \label{eq-weaksoln}
\E(u, \varphi) \ge  \langle f, \varphi \rangle
\end{equation}
for all  $\varphi \in C_c^{\infty}(\Omega)$
(resp.\ for all nonpositive/nonnegative $\varphi \in C_c^{\infty}(\Omega)$).

If $u \in C(\Om)$ is a 
solution of $\LL u =0$ in $\Om$, 
then $u$ is \emph{$\LL$-harmonic} in $\Om$.
\end{deff}

Here we have included equations $\LL u =f$ with right-hand sides $f$.
In Section~\ref{sect-rem} we will deduce a removability result
for such equations, but in the rest of the paper we only consider equations
$\LL u =0$, with $0$ in the right-hand side.

If $u$ is a solution 
of $\LL u =0$, then there is an $\LL$-harmonic function
$v$ such that $v=u$ a.e.,
see e.g.\ Di Castro--Kuusi--Palatucci~\cite[Theorem~1.4]{DCKP16}.
While their definition of 
solutions to $\LL u =0$ differs slightly from ours, 
the same argument applies and shows that
every solution in our sense 
admits a representative that is
locally H\"older continuous in $\Omega$.
The notion of supersolutions to $\LL u =0$,
as defined by Korvenp\"a\"a--Kuusi--Palatucci~\cite{KKP17},
only requires that 
$u \in W^{s, p}_{\mathrm{loc}}(\Om) $ and 
$u_\limminus \in L^{p-1}_{sp}(\R^n)$.
However, \cite[Lemma~1]{KKP17} shows that their
definition is equivalent to the one adopted here.

It is convenient to deal with a larger class of test functions than $C_c^\infty(\Om)$.
For this purpose, we will use the spaces 
\begin{equation*}
\Vsp(\Omega) := \biggl\lbrace u: \R^n \to \eR : 
u|_{\Omega} \in L^p(\Omega) \text{ and }  
\frac{|u(x)-u(y)|}{|x-y|^{n/p+s}} \in L^p(\Omega \times \R^n) \biggr\rbrace,
\end{equation*}
equipped with the norm
\begin{align*}
\|u\|_{V^{s, p}(\Omega)} 
&:= \bigl( \|u\|_{L^p(\Omega)}^p + [u]_{V^{s, p}(\Omega)}^p \bigr)^{1/p} \\
&:= \biggl( \int_{\Omega} |u(x)|^p \,dx + \int_{\Omega} \int_{\R^n} \frac{|u(x)-u(y)|^p}{|x-y|^{n+sp}} \,dy \,dx \biggr)^{1/p},
\end{align*}
and
\begin{equation*}
V^{s, p}_0(\Omega) := \overline{C_c^{\infty}(\Omega)}^{V^{s, p}(\Omega)}.
\end{equation*}

With this definition, 
Proposition~2.2 in \cite{BBK1} shows that
a function $u\in \Wsploc(\Om) \cap L^{p-1}_{sp}(\R^n)$
is a (super)solution of $\LL u=f$ in $\Om$ if and only if \eqref{eq-weaksoln}
holds for all  (nonnegative) $\phi\in\VspOm$ with $\supp\phi\Subset\Om$.
By writing $\phi=\phi_\limplus - \phi_\limminus$ and using the
linearity of $\E(\cdot\,,\cdot)$ with respect to the second argument,
it is then easy to see that $u$ is a solution of $\LL u =f$
if and only if it is both a sub- and  supersolution of $\LL u =f$.

Note that $\Vsp(\R^n)=\Wsp(\R^n)$
and that
\[
\Lipc(\Om) \subset \Vspo(\Om) \subset
\Vsp(\Om) \subset
\Wsp(\Om) \cap L^{p-1}_{sp}(\R^n).
\]

See 
\cite[Remark~2.2]{KLL23}, \cite[Section~2]{BBK1}
and the references therein for further remarks 
on the spaces $\Vsp_0(\Om)$ and $\Vsp(\Om)$,
which were denoted by $\Wsp_0(\Om)$ and $\Vsp(\Om|\Rn)$
in~\cite{KLL23}.
 The space $V^{s,2}(\Om)$ was (with $p=2$) introduced by
 Servadei--Valdinoci~\cite{SV14} and independently by
 Felsinger--Kassmann--Voigt~\cite{FKV15}.

In this paper, there are many places where the functions need to be 
defined pointwise everywhere in a given set, and not just a.e. 
For convenience, we will therefore assume that all functions are defined
pointwise everywhere.
When saying that $u \in \VspoOm$ we will always assume that $u \equiv 0$ 
outside $\Om$. 
To handle complications with $\infty - \infty$ 
when letting $w=u-v$ 
we will say that $w(x)=0$ whenever 
$u(x)=v(x)$ even when
$u(x)= v(x)=\pm \infty$.

\begin{lem} \label{lem-Vsp-inclusion}
Let $G \subset \Omega$ be open sets. If $u \in V^{s, p}_0(G)$, 
then $u \in V^{s, p}_0(\Omega)$.
\end{lem}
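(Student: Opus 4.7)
My plan is to use the density definition directly. Since $u \in V^{s,p}_0(G)$, there exist $\phi_k \in C_c^\infty(G)$ with $\|\phi_k - u\|_{V^{s,p}(G)} \to 0$. Each $\phi_k$ has compact support in $G \subset \Omega$, so $\phi_k \in C_c^\infty(\Omega)$. I will show that $\phi_k \to u$ also in the $V^{s,p}(\Omega)$-norm, which places $u$ in $\overline{C_c^\infty(\Omega)}^{V^{s,p}(\Omega)} = V^{s,p}_0(\Omega)$.

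The key technical ingredient is the norm inequality: for every $\psi\colon\R^n \to \eR$ that vanishes on $\R^n \setminus G$,
\[
\|\psi\|_{V^{s,p}(\Omega)}^p \leq 2\,\|\psi\|_{V^{s,p}(G)}^p.
\]
I would prove it by splitting the outer integral in $[\psi]_{V^{s,p}(\Omega)}^p$ as $\int_G + \int_{\Omega \setminus G}$. The first piece is exactly $[\psi]_{V^{s,p}(G)}^p$. In the second, $\psi(x)=0$ for $x\in \Omega\setminus G$ and $\psi(y)=0$ for $y\in \R^n\setminus G$, so only $y\in G$ contributes, leaving $\int_{\Omega \setminus G}\int_G |\psi(y)|^p/|x-y|^{n+sp}\, dy\, dx$. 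Tonelli, a relabelling $x \leftrightarrow y$, and the inclusion $\Omega \setminus G \subset \R^n \setminus G$ bound this by
\[
\int_G \int_{\R^n \setminus G} \frac{|\psi(x)|^p}{|x-y|^{n+sp}}\, dy\, dx = \int_G \int_{\R^n \setminus G} \frac{|\psi(x)-\psi(y)|^p}{|x-y|^{n+sp}}\, dy\, dx \leq [\psi]_{V^{s,p}(G)}^p,
\]
where the middle equality again uses $\psi(y)=0$ off $G$. The $L^p$ contributions coincide since $\psi \equiv 0$ on $\Omega \setminus G$.

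Applying this estimate to $\psi = \phi_k - u$ finishes the argument: by the paper's convention that elements of $V^{s,p}_0(G)$ vanish outside $G$, this difference does vanish on $\R^n \setminus G$, so
\[
\|\phi_k - u\|_{V^{s,p}(\Omega)} \leq 2^{1/p}\,\|\phi_k - u\|_{V^{s,p}(G)} \to 0,
\]
and hence $u \in V^{s,p}_0(\Omega)$. I foresee no significant obstacle; the whole content is a direct manipulation of the defining double integrals, and the only point requiring attention is being consistent with the extension-by-zero convention for $V^{s,p}_0$-functions when passing from $G$ to $\Omega$.
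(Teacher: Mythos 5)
Your proof is correct and follows essentially the same route as the paper: approximate $u$ by $\phi_k \in C_c^\infty(G)\subset C_c^\infty(\Omega)$ and bound $\|\phi_k-u\|_{V^{s,p}(\Omega)}$ by a constant times $\|\phi_k-u\|_{V^{s,p}(G)}$, using that the difference vanishes off $G$. The only cosmetic difference is that you bound $\|\cdot\|_{V^{s,p}(\Omega)}$ directly by $2^{1/p}\|\cdot\|_{V^{s,p}(G)}$, while the paper inserts $\|\cdot\|_{W^{s,p}(\R^n)}$ as an intermediate step; the underlying decomposition of the double integral and the use of the extension-by-zero convention are identical.
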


\begin{proof}
Let $u_j \in C^\infty_c(G)$ be such that $u_j \to u$ in $V^{s, p}(G)$ as $j \to \infty$. Since $u_j \in C^\infty_c(\Omega)$ and
\begin{equation*}
\|u-u_j\|_{V^{s, p}(\Omega)} \leq \|u-u_j\|_{W^{s, p}(\Rn)} \leq 2\|u-u_j\|_{V^{s, p}(G)} \to 0
\quad \text{as }j \to \infty,
\end{equation*}
we conclude that $u \in V^{s, p}_0(\Omega)$.
\end{proof}

We shall also use the following well-known fractional Poincar\'e inequality
for functions $u\in \Lipc(\Om)$,
\begin{equation} \label{eq-frac-PI}
\|u\|_{W^{s,p}(\R^n)}^p \simle
(1+(\diam\Om)^{sp})[u]_{W^{s,p}(\R^n)}^p.
\end{equation}
It follows from the following simple argument 
for any $z\in\Rn$ with $\dist(z,\Om)=2\diam\Om$, 
\begin{equation*}
\int_{\Om}|u(x)|^p  \,  dx
\simle
(\diam\Om)^{sp} \int_{\Om}\int_{B(z,\diam\Om)} \frac{|u(x)-u(y)|^p}{|x-y|^{n+s p}}  \, dy\, dx.
\end{equation*}
See Maz{\cprime}ya--Shaposhnikova~\cite{MS} and Ponce~\cite{Ponce} 
for much more general  Poincar\'e inequalities.
Here and later, we write $a \simle b$ 
if there is an implicit comparison constant $C>0$ such that $a \le Cb$, 
and $a \simeq b$ if $a \simle b \simle a$.

\section{Capacities}\label{sect-cap}

\emph{In the rest of the paper, except for Section~\ref{sect-rem},
we assume that $\Om$ is a bounded nonempty open set.}

\medskip

There are two 
types of capacities used in this paper: 
the condenser capacity $\cpt$, which appears in the Wiener criterion,
and the Sobolev capacity $\Cpt$, which is convenient when dealing
with function spaces and in the Kellogg property, since it involves only
one set, unlike $\cpt$.
In contrast to Kim--Lee--Lee~\cite{KLL23}
we will need capacities also for noncompact sets.
We therefore make the following definitions.

\begin{deff} \label{deff-cpt}
The \emph{condenser capacity} of a compact set $K \Subset \Om$ is given by
\[
    \cpt(K,\Om) = \inf_u {[u]_{W^{s,p}(\R^n)}^p},
\]
where the infimum is taken over all  $u\in C_c^\infty(\Om)$
such that  $u \ge 1$  on $K$.
\end{deff}

\begin{deff}
The \emph{Sobolev capacity} of a compact set $K \Subset \Rn$ is defined by
\begin{equation}  \label{eq-def-norm-cap}
 \Cpt(K)   =\inf_u {\|u\|_{\Wsp(\Rn)}^p},
\end{equation}
where the infimum is taken over all $u\in C_c^\infty(\Rn)$ such that 
$u \ge 1$ on $K$.
\end{deff}

Both capacities are then extended first to open and then to arbitrary sets 
in the usual way as follows:
For open sets $G$,
\begin{equation}   \label{eq-cap-G}
\begin{aligned}
\cpt(G,\Om) & = \sup_{\substack{K \text{ compact}\\ K \Subset G}} \cpt(K,\Om), && \text{if $G \subset \Om$}, \\
\Cpt(G) &= \sup_{\substack{K \text{ compact}\\ K \Subset G}} \Cpt(K), && \text{if $G \subset \Rn$}.
\end{aligned}
\end{equation}
Similarly, for arbitrary  sets~$E$,
\begin{equation}  \label{eq-cap-E}
\begin{aligned}
\cpt(E,\Om) &= \inf_{\substack{G \text{ open}\\ E \subset G \subset \Om}} \cpt(G,\Om), && \text{if $E \subset \Om$}, \\
\Cpt(E) &= \inf_{\substack{G \text{ open}\\ E \subset G}} \Cpt(G), && \text{if $E \subset \Rn$}.
\end{aligned}
\end{equation}

It is worth noticing that~\eqref{eq-cap-G} and~\eqref{eq-cap-E} do not change the 
definition of capacity for compact sets, 
see \cite[Section~5]{BBK1}.

We will need some results
from~\cite[Section~5]{BBK1}, see therein for a more
detailed discussion.
The following result
shows that 
the two capacities $\cpt$ and $\Cpt$
have the same zero sets.

\begin{lem} \label{lem-cp-Cp}
\textup{(\cite[Proposition~5.4]{BBK1})}
Let $E  \Subset \Om$.
Then  
$\Cpt(E)=0$ if and only if  $\cpt(E,\Om)=0$.
\end{lem}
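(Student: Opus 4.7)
The plan is to first prove the equivalence for compact sets, where both capacities are given directly as infima over $C_c^\infty$ test functions, and then bootstrap to arbitrary $E\Subset\Om$ via the sup/inf definitions \eqref{eq-cap-G}--\eqref{eq-cap-E}. So assume first that $K\Subset\Om$ is compact. One direction is immediate from Poincar\'e: if $\cpt(K,\Om)=0$, take admissible $u_j\in C_c^\infty(\Om)$ with $u_j\ge1$ on $K$ and $[u_j]_{W^{s,p}(\R^n)}^p\to0$; since $\supp u_j\subset\Om$ is bounded, \eqref{eq-frac-PI} gives $\|u_j\|_{W^{s,p}(\R^n)}^p\lesssim (1+(\diam\Om)^{sp})[u_j]_{W^{s,p}(\R^n)}^p\to0$, and as $u_j\in C_c^\infty(\R^n)$ the same functions witness $\Cpt(K)=0$.

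For the reverse direction, if $\Cpt(K)=0$, fix once and for all a cutoff $\eta\in C_c^\infty(\Om)$ with $\eta\equiv1$ on a neighborhood of $K$ (possible since $K\Subset\Om$), and take admissible $u_j\in C_c^\infty(\R^n)$ with $\|u_j\|_{W^{s,p}(\R^n)}^p\to0$. Then $\eta u_j\in C_c^\infty(\Om)$ with $\eta u_j\ge1$ on $K$, and the key step is the Leibniz-type estimate
\[
[\eta u]_{W^{s,p}(\R^n)}^p\le C(\eta)\,\|u\|_{W^{s,p}(\R^n)}^p.
\]
This is the main (and only really nontrivial) obstacle. Writing $(\eta u)(x)-(\eta u)(y)=\eta(x)(u(x)-u(y))+u(y)(\eta(x)-\eta(y))$ and using $|a+b|^p\le 2^{p-1}(|a|^p+|b|^p)$, the first summand yields $\|\eta\|_\infty^p[u]_{W^{s,p}(\R^n)}^p$ directly. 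For the second, I would split the $x$-integral at $|x-y|=1$: on $\{|x-y|<1\}$ the Lipschitz bound $|\eta(x)-\eta(y)|^p\le L^p|x-y|^p$ produces an integrable radial factor $|x-y|^{p-n-sp}$ because $s<1$; on $\{|x-y|\ge1\}$ the bound $|\eta(x)-\eta(y)|^p\le (2\|\eta\|_\infty)^p$ is integrated against $|x-y|^{-n-sp}$, also finite. Both pieces contribute a constant multiple of $\|u\|_{L^p(\R^n)}^p$, finishing the estimate. Applied to $u_j$ this gives $[\eta u_j]_{W^{s,p}(\R^n)}^p\to0$, so $\cpt(K,\Om)=0$.

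To extend from compact to arbitrary $E\Subset\Om$, I would choose an open $V$ with $E\subset V\Subset\Om$ and a single cutoff $\eta\in C_c^\infty(\Om)$ with $\eta\equiv1$ on $V$. Then the two quantitative statements from the compact case, namely $\Cpt(K)\lesssim_\Om \cpt(K,\Om)$ (via Poincar\'e) and $\cpt(K,\Om)\le C(\eta)\Cpt(K)$ (via the fixed $\eta$), hold uniformly for all compact $K\Subset V$. If $\Cpt(E)=0$, for any $\eps>0$ pick by \eqref{eq-cap-E} an open $G\supset E$ in $\Rn$ with $\Cpt(G)<\eps$; replacing $G$ by $G\cap V$ keeps $G\subset V\subset\Om$ and only decreases $\Cpt(G)$. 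Taking the supremum over compact $K\Subset G$ in the compact-case bound yields $\cpt(G,\Om)\le C(\eta)\Cpt(G)<C(\eta)\eps$, so $\cpt(E,\Om)=0$. The converse is symmetric: from $\cpt(E,\Om)=0$ select open $G\subset\Om$ with $E\subset G$ and $\cpt(G,\Om)<\eps$, and take suprema in the Poincar\'e-based bound to conclude $\Cpt(E)\le\Cpt(G)\lesssim_\Om\cpt(G,\Om)<C_\Om\eps$.
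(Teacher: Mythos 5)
Your proof is correct. Since the paper defers the proof to~\cite[Proposition~5.4]{BBK1}, there is no in-text argument to compare against, but your strategy is the expected one and is sound: the fractional Poincar\'e inequality~\eqref{eq-frac-PI} gives $\Cpt(K)\lesssim_\Om\cpt(K,\Om)$ on compacta, a Leibniz-type cutoff estimate (whose integrability analysis over $\{|x-y|<1\}$ and $\{|x-y|\ge1\}$ you carry out correctly, using $s<1$ and $sp>0$) gives $\cpt(K,\Om)\le C(\eta)\Cpt(K)$ on compacta inside a fixed $V\Subset\Om$, and both comparisons lift cleanly through the $\sup$-over-compacta and $\inf$-over-opens extensions in~\eqref{eq-cap-G}--\eqref{eq-cap-E}, with the only care needed being (as you note) to intersect the approximating open set with $V$ so that a single cutoff $\eta$ serves throughout.
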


\begin{lem} \label{lem-sp<=n}
\textup{(\cite[Lemma~5.5]{BBK1})}
Let $x_0 \in \Om$.
Then the following are equivalent\/\textup:
\begin{enumerate}
\item
$sp \le n$,
\item
$\Csp(\{x_0\})=0$, 
\item
$\csp(\{x_0\},\Om)=0$.
\end{enumerate}
\end{lem}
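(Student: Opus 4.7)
The plan is to prove the equivalences by combining Lemma~\ref{lem-cp-Cp} with two direct computations. The implication (b)$\iff$(c) is immediate from Lemma~\ref{lem-cp-Cp} applied to the compact set $\{x_0\}\Subset\Om$, which is a compact subset of $\Om$ since $\Om$ is open and $x_0\in\Om$. So the real task is to establish (a)$\iff$(b).

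For (a)$\Rightarrow$(b), I fix $\phi\in C_c^\infty(\Rn)$ with $\phi(0)=1$ and $\supp\phi\subset B(0,1)$, and split into two cases. When $sp<n$, the scaled bump $u_\eps(x)=\phi((x-x_0)/\eps)$ is admissible for $\Csp(\{x_0\})$; the change of variables $x-x_0=\eps\tilde x$, $y-x_0=\eps\tilde y$ yields $\|u_\eps\|_{L^p(\Rn)}^p=\eps^n\|\phi\|_{L^p}^p$ and $[u_\eps]_{W^{s,p}(\Rn)}^p=\eps^{n-sp}[\phi]_{W^{s,p}}^p$, both of which vanish as $\eps\to0$. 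When $sp=n$ the Gagliardo seminorm is scale-invariant, so I would replace $u_\eps$ by the (Lipschitz, compactly supported) logarithmic cut-off
\begin{equation*}
w_\eps(x)=\min\biggl\{1,\frac{(\log(1/|x-x_0|))_\limplus}{\log(1/\eps)}\biggr\},\quad 0<\eps<\tfrac12,
\end{equation*}
which equals $1$ on $B(x_0,\eps)$ and vanishes outside $B(x_0,1)$. The $L^p$-bound is trivial, while the Gagliardo seminorm is estimated by splitting the double integral according to whether $x,y$ lies in $B(x_0,\eps)$, in the transition annulus, or outside $B(x_0,1)$, and using that $|\nabla w_\eps(x)|\simle 1/(|x-x_0|\log(1/\eps))$ on the annulus; polar coordinates then give $[w_\eps]_{W^{s,p}(\Rn)}^p\simle (\log(1/\eps))^{-(p-1)}\to0$. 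Mollifying each $w_\eps$ and renormalizing produces admissible $C_c^\infty$ functions of arbitrarily small $W^{s,p}$-norm, so $\Csp(\{x_0\})=0$.

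For the converse $\neg$(a)$\Rightarrow\neg$(b), suppose $sp>n$. Then the fractional Morrey embedding $W^{s,p}(\Rn)\hookrightarrow C^{0,s-n/p}(\Rn)$ (see Di Nezza--Palatucci--Valdinoci~\cite{DNPV12}) gives a constant $C$ with $|u(x_0)|\le C\|u\|_{W^{s,p}(\Rn)}$ for every $u\in C_c^\infty(\Rn)$. Any admissible test function in the definition of $\Csp(\{x_0\})$ satisfies $u(x_0)\ge1$, hence $\|u\|_{W^{s,p}(\Rn)}\ge 1/C$, and $\Csp(\{x_0\})\ge C^{-p}>0$.

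The main obstacle is the borderline case $sp=n$: the subcritical scaling and the supercritical embedding are both immediate, but in the critical case the scaling argument stalls, and one must exploit the logarithmic margin. Verifying the decay $[w_\eps]_{W^{s,p}(\Rn)}^p\simle(\log(1/\eps))^{-(p-1)}$ requires a careful region-by-region decomposition of the Gagliardo double integral, together with the pointwise gradient bound above and a nontrivial polar-coordinate computation, and is the most delicate part of the argument.
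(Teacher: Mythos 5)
The paper does not prove this lemma here; it simply quotes \cite[Lemma~5.5]{BBK1}, and in light of Remark~\ref{rmk-Bsp-Csp} the fact ultimately rests on the classical Bessel-capacity results in Adams--Hedberg~\cite{AH}. Your proposal is a direct, self-contained alternative, which is a genuinely different route. The reduction (b)$\iff$(c) via Lemma~\ref{lem-cp-Cp} applied to $\{x_0\}\Subset\Om$ is exactly right; the scaling computation for $sp<n$ is correct; and the argument for $sp>n$ via the Morrey-type embedding (which gives $|u(x_0)|\le C\|u\|_{W^{s,p}(\Rn)}$ for every admissible $u$, hence $\Csp(\{x_0\})\ge C^{-p}>0$) is also correct. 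A small simplification: you can skip the mollification step entirely by invoking Proposition~\ref{prop-Wsp-Csp}, since $w_\eps\in W^{s,p}(\Rn)$ already satisfies $0\le w_\eps\le1$ and $w_\eps=1$ on the open neighbourhood $B(x_0,\eps)$ of $x_0$.

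The one place your outline undersells the work is the critical case $sp=n$, $0<s<1$. Your phrasing ``using that $|\nabla w_\eps(x)|\simle1/(|x-x_0|\log(1/\eps))$ on the annulus; polar coordinates then give $[w_\eps]_{W^{s,p}(\Rn)}^p\simle(\log(1/\eps))^{-(p-1)}$'' reads as if one could bound the Gagliardo seminorm by a single radial integral of the gradient, as in the classical $s=1$, $p=n$ computation. For $s<1$ we have $p=n/s>n$, so $\|\nabla w_\eps\|_{L^p(\Rn)}^p\approx\eps^{n-p}(\log(1/\eps))^{-p}$ blows up as $\eps\to0$, and the crude interpolation $[u]_{W^{s,p}}^p\simle\|u\|_{L^p}^{p(1-s)}\|\nabla u\|_{L^p}^{sp}+\|u\|_{L^p}^p$ fails to give decay. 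The estimate $[w_\eps]_{W^{s,p}(\Rn)}^p\simle(\log(1/\eps))^{-(p-1)}$ is nevertheless correct, but its verification requires a genuine two-variable decomposition: cover the transition annulus by dyadic shells $A_j=\{2^j\eps\le|x-x_0|<2^{j+1}\eps\}$, $0\le j\simle\log(1/\eps)$; for pairs $(x,y)$ in comparable shells use the local Lipschitz bound and integrate $|h|^{p-n-sp}=|h|^{p-2n}$ (convergent near $0$ since $p>n$) to get a contribution $\approx(\log(1/\eps))^{-p}$ per shell; for pairs in far-apart shells $A_j$, $A_k$ use the crucial cancellation $|w_\eps(x)-w_\eps(y)|\approx|k-j|/\log(1/\eps)$, the size $|x-y|\approx 2^{\max(j,k)}\eps$, and $sp=n$ to get geometric decay $2^{-n|k-j|}$ in the shell separation, again summing to $(\log(1/\eps))^{1-p}$ after summing over $\simle\log(1/\eps)$ shells. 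This region-by-region bookkeeping is exactly where the $(x,y)$-double integral cannot be collapsed to a gradient integral, and is the real content of your critical case. With that caveat spelled out, the proposal is correct.
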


\begin{lem}\label{lem-zero-cap}
\textup{(\cite[Lemma~5.6]{BBK1})}
If $E\subset \R^n$ is measurable, then
$\Cpt(E) \ge |E|$, where $|\cdot|$ denotes Lebesgue measure.
\end{lem}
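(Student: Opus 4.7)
The plan is to prove the inequality $\Cpt(E) \ge |E|$ in three stages corresponding to the three-step definition of $\Cpt$: first for compact sets, then for open sets via \eqref{eq-cap-G}, then for arbitrary sets via \eqref{eq-cap-E}.

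For the compact case, I would take any compact $K\Subset\R^n$ and any admissible test function $u\in C_c^\infty(\R^n)$ with $u\ge 1$ on $K$. Since
\[
\|u\|_{W^{s,p}(\R^n)}^p = \|u\|_{L^p(\R^n)}^p + [u]_{W^{s,p}(\R^n)}^p \ge \int_{\R^n}|u|^p\,dx \ge \int_K 1\,dx = |K|,
\]
taking the infimum over all admissible $u$ in \eqref{eq-def-norm-cap} yields $\Cpt(K) \ge |K|$.

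Next, for an open set $G\subset\R^n$, I would use the inner regularity of Lebesgue measure: $|G| = \sup\{|K| : K \Subset G,\ K \text{ compact}\}$. Combining this with the first step and the definition in \eqref{eq-cap-G} gives
\[
\Cpt(G) = \sup_{K\Subset G} \Cpt(K) \ge \sup_{K\Subset G} |K| = |G|.
\]
Finally, for an arbitrary measurable $E\subset\R^n$, outer regularity of Lebesgue measure yields $|E| = \inf\{|G| : G\supset E,\ G \text{ open}\}$, so by \eqref{eq-cap-E},
\[
\Cpt(E) = \inf_{G\supset E} \Cpt(G) \ge \inf_{G\supset E}|G| = |E|.
\]

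There is no real obstacle here; the proof is essentially a direct unwinding of the definitions combined with the standard regularity properties of Lebesgue measure. The only point to be slightly careful about is that in \eqref{eq-cap-G} the supremum is over compact $K\Subset G$ (strict inclusion with compactness), which is exactly the form of inner regularity one uses for Lebesgue measure on open sets, so the two match up without trouble.
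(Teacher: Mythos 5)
Your proof is correct. The paper does not present its own proof of this lemma but simply cites it from the earlier paper [BBK1]; your argument — compare the $L^p$ part of the norm with $|K|$ for admissible test functions, then pass through the three-step definition of $\Cpt$ using inner and outer regularity of Lebesgue measure — is the natural and standard way to establish it, and every step is sound.
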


The following result is  
convenient when calculating capacities.

\begin{prop} \label{prop-Wsp-Csp}
\textup{(\cite[Lemma~5.7]{BBK1})}
Let $E \subset\Rn$.
Then
\begin{equation*} 
   \Cpt(E) = \inf_{u} {\|u\|^p_{\Wsp(\Rn)}},
\end{equation*}
where the infimum is taken over all $u \in \Wsp(\Rn)$ such that
$0 \le u \le 1$ everywhere and $u=1$ 
in  an open set containing $E$.
\end{prop}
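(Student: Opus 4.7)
The plan is to denote the right-hand side of the identity by $\widetilde C(E)$ and show $\widetilde C(E)=\Cpt(E)$ in three stages: compact sets, open sets, and arbitrary sets. A key preliminary observation is that $\widetilde C$ is outer regular: any competitor $v$ for $\widetilde C(E)$, with $v\equiv 1$ on some open $U\supset E$, is automatically a competitor for $\widetilde C(U)$, so $\widetilde C(U)\le\widetilde C(E)$, and monotonicity gives the reverse.

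For a compact set $K$, the inequality $\widetilde C(K)\le \Cpt(K)$ follows by truncation: given $u\in C_c^\infty(\Rn)$ with $u\ge 1$ on $K$, the function $v_\eps:=\min\{(1+\eps)u_\limplus,1\}$ lies in $\Wsp(\Rn)$, satisfies $0\le v_\eps\le 1$, and by continuity of $u$ equals $1$ on the open set $\{(1+\eps)u>1\}$ containing $K$; since truncation and positive parts do not increase the $\Wsp$-norm, $\|v_\eps\|_{\Wsp(\Rn)}\le(1+\eps)\|u\|_{\Wsp(\Rn)}$, and $\eps\downarrow 0$ yields the bound after taking infima. Conversely, any competitor $v$ for $\widetilde C(K)$ with $v\equiv 1$ on an open $U\supset K$ is first multiplied by a smooth radial cutoff $\eta_R$ to obtain compact support (one checks $\eta_R v\to v$ in $\Wsp(\Rn)$ by a standard estimate on the fractional seminorm of $(1-\eta_R)v$) and then mollified at scale $\delta<\dist(K,\Rn\setm U)$; this produces $C_c^\infty(\Rn)$ approximations of $v$ that equal $1$ on $K$, yielding $\Cpt(K)\le\widetilde C(K)$.

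For an open set $G$, the inequality $\Cpt(G)\le\widetilde C(G)$ is immediate from Stage~1 and the definition $\Cpt(G)=\sup_{K\Subset G}\Cpt(K)$, since every competitor for $\widetilde C(G)$ is also a competitor for $\widetilde C(K)=\Cpt(K)$ whenever $K\Subset G$. The reverse inequality is the crux. I would choose an exhaustion $K_1\Subset K_2\Subset\cdots$ of $G$ and competitors $u_j\in C_c^\infty(\Rn)$ for $\Cpt(K_j)$, truncated so that $0\le u_j\le 1$, with $\|u_j\|_{\Wsp(\Rn)}^p\le\Cpt(G)+1/j$. Reflexivity of $\Wsp(\Rn)$ gives a weakly convergent subsequence $u_j\rightharpoonup u$, with $0\le u\le 1$ and $\|u\|_{\Wsp(\Rn)}^p\le\Cpt(G)$ by weak lower semicontinuity of the norm. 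The compact embedding $\Wsp\hookrightarrow L^p_\loc$ (on balls) provides, along a further subsequence, a.e.\ convergence; since each $x\in G$ eventually lies in some $K_j$ where $u_j\ge 1$, we obtain $u=1$ a.e.\ on $G$, and after redefining $u$ on a null set, $u\equiv 1$ pointwise on the open set $G$, which makes $u$ a valid competitor and yields $\widetilde C(G)\le\Cpt(G)$.

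For an arbitrary set $E$, outer regularity of $\widetilde C$ together with Stage~2 and the definition~\eqref{eq-cap-E} of $\Cpt(E)$ gives
\[
\widetilde C(E)=\inf_{G\supset E,\,G\text{ open}}\widetilde C(G)=\inf_{G\supset E,\,G\text{ open}}\Cpt(G)=\Cpt(E).
\]
The main obstacle is the upper bound in Stage~2: extracting from a sequence of $C_c^\infty$ competitors, each equal to $1$ only on a compact exhausting piece, a single $\Wsp$-function equal to $1$ on all of $G$ while preserving the bound on the norm. This is what forces the use of reflexivity of $\Wsp(\Rn)$, weak lower semicontinuity of the norm, and the compact embedding into $L^p_\loc$.
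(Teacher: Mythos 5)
The paper cites this as \cite[Lemma~5.7]{BBK1} without reproducing a proof, so there is no in-paper argument to compare against; I can only assess your proof on its own terms, and it is correct. The three-stage architecture (compact sets by truncation, cutoff and mollification; open sets by weak compactness, weak lower semicontinuity and the compact embedding $\Wsp\hookrightarrow L^p\loc$; arbitrary sets by the outer regularity of both quantities) is sound, and the individual estimates you invoke all check out: $\min\{(1+\eps)u_\limplus,1\}$ does not increase the $\Wsp$-norm because it is a $1$-Lipschitz, $0$-fixing composition; $\eta_Rv\to v$ in $\Wsp(\Rn)$ follows from $\int_{\Rn}\min\{|x-y|/R,1\}^p|x-y|^{-n-sp}\,dx\lesssim R^{-sp}$ (which uses $sp<p$, i.e.\ $s<1$); mollification is a contraction on $\Wsp(\Rn)$; reflexivity of $\Wsp(\Rn)$ and weak closedness of $\{0\le w\le1\ \text{a.e.}\}$ justify the weak-limit step. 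The one place that genuinely needs the care you give it is the passage from $u=1$ a.e.\ on $G$ to a pointwise admissible competitor: since the definition of $\widetilde C$ demands $0\le u\le1$ \emph{everywhere} and $u=1$ on an \emph{open} set, the redefinition $\tilde u:=1$ on $G$ and $\tilde u:=\min\{\max\{u,0\},1\}$ off $G$ (a modification on a null set, hence norm-preserving) is exactly what is needed, and you flag this correctly.
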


A simple consequence of Proposition~\ref{prop-Wsp-Csp} is that
$\Cpt$ is countably subadditive. 
This does not seem to be so easy to prove
directly from the three-step definition of the Sobolev capacity.
We  include a  short proof for the reader's convenience.

\begin{prop} \label{prop-Cpt-subadd}
If $E=\bigcup_{i=1}^\infty E_i \subset \R^n$, then 
\begin{equation} \label{eq-Cpt-subadd}
      \Cpt(E)
          \le \sum_{i=1}^\infty \Cpt(E_i).
\end{equation}

If also $E \subset \Om$, then 
\begin{equation} \label{eq-csp-subadd}
      \csp(E,\Om)
          \le \sum_{i=1}^\infty \csp(E_i,\Om).
\end{equation}
\end{prop}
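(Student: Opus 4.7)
My plan is to prove both inequalities in parallel, using Proposition~\ref{prop-Wsp-Csp} for the Sobolev case and an analogous characterization for the condenser case. The engine is the elementary pointwise bound
\[
\bigl|\max_i u_i(x) -\max_i u_i(y)\bigr|\le \max_i|u_i(x)-u_i(y)|
\]
for nonnegative $u_1,\ldots,u_N$, together with $(\max_i u_i)^p=\max_i u_i^p\le\sum_i u_i^p$. Consequently the full $W^{s,p}(\R^n)$ norm of a finite maximum of nonnegative functions is dominated by the sum of their individual norms.

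For \eqref{eq-Cpt-subadd}, I assume $\sum_i\Csp(E_i)<\infty$ and fix $\eps>0$. By Proposition~\ref{prop-Wsp-Csp} I pick $u_i\in W^{s,p}(\R^n)$ with $0\le u_i\le1$, $u_i\equiv1$ on an open set $G_i\supset E_i$, and $\|u_i\|_{W^{s,p}(\R^n)}^p\le\Csp(E_i)+\eps\,2^{-i}$. Set $v_N=\max(u_1,\ldots,u_N)$ and $v=\sup_i u_i=\lim_N v_N$. The observation above gives
\[
\|v_N\|_{W^{s,p}(\R^n)}^p\le\sum_{i=1}^N\|u_i\|_{W^{s,p}(\R^n)}^p\le\sum_i\Csp(E_i)+\eps.
\]
Monotone convergence (for the $L^p$ part) and Fatou's lemma (for the Gagliardo seminorm) show that $v\in W^{s,p}(\R^n)$ with the same bound. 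Since $0\le v\le1$ and $v\equiv1$ on the open set $\bigcup_i G_i\supset E$, a second application of Proposition~\ref{prop-Wsp-Csp} yields $\Csp(E)\le\|v\|_{W^{s,p}(\R^n)}^p$, and $\eps\to0$ finishes.

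For \eqref{eq-csp-subadd} I follow the same template, with test functions additionally required to vanish outside $\Om$. First reduce to the open case by picking open $U_i\subset\Om$ with $E_i\subset U_i$ and $\csp(U_i,\Om)<\csp(E_i,\Om)+\eps\,2^{-i}$; since $\csp$ on an open set is the supremum of $\csp$ on its compact subsets, it suffices to bound $\csp(K,\Om)$ for compact $K\Subset\bigcup_i U_i$, to which the same max/Fatou construction applies.

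The main obstacle is closing this last step: $\csp(K,\Om)$ is defined via $C_c^\infty(\Om)$ test functions, whereas the maximum $v=\sup_i u_i$ produced above is only in $W^{s,p}(\R^n)$ and merely vanishes outside $\Om$. I therefore need the $\csp$-analog of Proposition~\ref{prop-Wsp-Csp}, stating that $\csp(K,\Om)$ equals the infimum of $[u]_{W^{s,p}(\R^n)}^p$ over $u\in W^{s,p}(\R^n)$ with $0\le u\le1$, $u\equiv0$ outside $\Om$, and $u\equiv1$ on an open neighborhood of $K$. One direction follows immediately by truncating admissible $C_c^\infty(\Om)$ test functions; the other requires approximating such a $u$ by $C_c^\infty(\Om)$ functions via standard mollification within the open set where $u\equiv1$ near $K$ and multiplication by a smooth cutoff supported in $\Om$ and equal to $1$ on a neighborhood of $K$, with a Fatou-style passage to the limit. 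Apart from this routine technical point the argument is identical to the Sobolev case.
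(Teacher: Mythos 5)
Your argument is correct and follows essentially the same route as the paper: take near-optimal test functions $u_i$ from Proposition~\ref{prop-Wsp-Csp}, form $\sup_i u_i$, bound its $W^{s,p}$ norm by $\sum_i\|u_i\|_{W^{s,p}}^p$ via the pointwise estimate on maxima, and conclude; the paper does this in one step with $u=\sup_i u_i$ and the pointwise bound $|u(x)-u(y)|^p\le\sum_i|u_i(x)-u_i(y)|^p$, while you pass through finite maxima plus monotone convergence/Fatou, which is a cosmetic variation. The condenser-capacity analogue of Proposition~\ref{prop-Wsp-Csp} that you correctly identify as the missing ingredient is exactly what the paper invokes (it is Proposition~6.2 of \cite{BBK1}), so no reduction to compact subsets is actually needed --- you can apply that lemma directly to each $E_i$ and to $E$, just as in the Sobolev case.
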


\begin{proof}
We first consider \eqref{eq-Cpt-subadd}.
We may assume that the right-hand side is finite.
Let $\eps>0$.
By 
Proposition~\ref{prop-Wsp-Csp}, there is 
for each $i=1,2,\dots$\,, 
a function $u_i \in \Wsp(\Rn)$ such that 
$u_i = 1$  in a neighbourhood of $E_i$, $0 \le u \le 1$ everywhere
and 
\[
\|u_i\|_{W^{s, p}(\R^n)}^p < \Cpt(E_i)+ \frac{\eps}{2^i}.
\]
Let $u=\sup_i u_i$.
Then $u = 1$ in a neighbourhood of $E$.
Moreover, for $x,y \in \Rn$, 
\[
  |u(x)-u(y)|^p
  \le \sup_i |u_i(x)-u_i(y)|^p
  \le \sum_{i=1}^\infty |u_i(x)-u_i(y)|^p
\]
and similarly,
$|u(x)|^p =   \sup_i |u_i(x)|^p 
  \le \sum_{i=1}^\infty |u_i(x)|^p$.
Hence, by Proposition~\ref{prop-Wsp-Csp} again, 
\begin{align*}
      \Cpt(E) 
      & \le \|u\|_{W^{s,p}(\R^n)}^p
       \le \sum_{i=1}^\infty \|u_i\|_{W^{s,p}(\R^n)}^p \\
      & 
           < \sum_{i=1}^\infty \Bigl( \Cpt(E_i)+ \frac{\eps}{2^i}\Bigr) 
           =   \sum_{i=1}^\infty \Cpt(E_i) + \eps.
\end{align*}
Letting $\eps \to 0$ completes the proof of \eqref{eq-Cpt-subadd}.

The proof of \eqref{eq-csp-subadd}
is similar, but
using  \cite[Proposition~6.2]{BBK1} instead
of Proposition~\ref{prop-Wsp-Csp}.
\end{proof}

\section{Removable singularities}
\label{sect-rem}

\emph{In contrast to most of the paper, in this section
we assume that $\Om$ is a nonempty open set that may be unbounded.}

\medskip

\begin{proof}[Proof of Theorem~\ref{thm-remove}]
We first consider the case for supersolutions.
Let $0 \le \phi\in C^\infty_c(\Om)$ be a test function.
Since $\Csp(E)=0$ and $E\cap \supp\phi$ is compact, 
for each $j=1,2,\dots$\,, there are open sets $G_j$ and $G_j'$ such that
\[
E\cap \supp\phi \Subset G_j \Subset G_j' \quad \text{and} \quad  \Csp(G_j')<1/j.
\]
Hence $\Csp(\clG_j) <1/j$, and thus by \eqref{eq-def-norm-cap},
there  are
$\eta_j'\in C_c^\infty(\R^n)$ such that $\eta_j'\ge1$ on $\clG_j$
and $\|\eta_j'\|_{W^{s,p}(\R^n)}< 1/j$.
Thus there is a mollification $\eta_j \in C_c^\infty(\R^n)$
of $\min\{\eta_j',1\}_\limplus$ such that 
$\eta_j=1$ on
$E\cap \supp\phi$, $0\le \eta_j \le 1$ in $\Rn$
and 
$\|\eta_j\|_{W^{s,p}(\R^n)}< 1/j$.
After taking a subsequence if necessary,
we can assume that also $\eta_j\to0$ a.e.\ as $j\to\infty$.
Then $\phi_j:=(1-\eta_j)\phi\in C^\infty_c(\Om\setm E)$ and hence
\begin{equation}   \label{eq-weak-sol-phij}
\int_{\Rn}\int_{\Rn} |u(x)-u(y)|^{p-2} (u(x)-u(y))(\phi_j(x)-\phi_j(y)) k(x, y)
      \, dy\, dx \ge
\langle f, \phi_j \rangle.
\end{equation}
Note that $\phi-\phi_j=\phi\eta_j$ and
\[
  |(\phi\eta_j)(x)-(\phi\eta_j)(y)| \le |(\phi(x)-\phi(y))\eta_j(x)|
       + |\phi(y)(\eta_j(x)-\eta_j(y))|.
\]
Since $\phi$ is bounded, we therefore get that $\phi_j\to\phi$ in $L^p(\R^n)$ and
\begin{equation}   \label{eq-[]-to0}
  [\phi\eta_j]_{W^{s,p}(\R^n)}^p
  \le 
  2^p\biggl( \int_{\Rn}\eta_j(x)^p \int_{\Rn} \frac{|\phi(x)-\phi(y)|^p}{|x-y|^{n+sp}} 
      \, dy\, dx   
    +
 [\eta_j]_{W^{s,p}(\R^n)}^p  \max_{\R^n} |\phi|^p 
\biggr).
\end{equation}
The last term clearly tends to 0 as $j\to\infty$ by the choice of $\eta_j$.
Since 
$\eta_j\to0$ a.e.\ and
\[
\int_{\Rn} \frac{|\phi(x)-\phi(y)|^p}{|x-y|^{n+sp}}  \, dy \in L^1(\R^n),
\]
dominated convergence implies that also the integral term in~\eqref{eq-[]-to0}
tends to 0 as $j\to\infty$.
Hence
\begin{equation}   \label{eq-semi-to0}
[\phi-\phi_j]_{W^{s,p}(\R^n)}^p = [\phi\eta_j]_{W^{s,p}(\R^n)}^p \to 0,
\quad \text{as } j\to\infty.
\end{equation}
Since $f\in \Vsp_0(\Om)^*$ and $\phi-\phi_j\in \Vsp_0(\Om)$,
it follows that the right-hand side
in~\eqref{eq-weak-sol-phij} converges to
\(
\langle f, \varphi \rangle.
\)
As for the left-hand side in~\eqref{eq-weak-sol-phij}, an application of
H\"older's inequality with respect to $dy\,dx$, together with the fact that $\phi=0$  outside
$\Om$, shows that
\begin{align*}
&\biggl| \int_{\Rn}\int_{\Rn} |u(x)-u(y)|^{p-2} (u(x)-u(y))((\phi\eta_j)(x)-(\phi\eta_j)(y)) k(x, y)
                 \, dy\, dx \biggr| \\
&\quad \le 2\int_{\Om}\int_{\Rn} |u(x)-u(y)|^{p-1} |(\phi\eta_j)(x)-(\phi\eta_j)(y)| k(x, y) 
                 \, dy\, dx \\
  &\quad \le 2\La [u]_{\Vsp(\Om)}^{p-1} [\phi\eta_j]_{W^{s,p}(\R^n)} \to 0,
\quad \text{as } j\to\infty,
\end{align*}
by~\eqref{eq-semi-to0} and the fact that $u\in \Vsp(\Omega \setminus E) = \Vsp(\Omega)$ (since $|E|=0$, by \cite[Lemma~5.6]{BBK1}).
Hence
\[
  \E(u,\phi) = \lim_{j \to \infty} \E(u,\phi_j) 
\ge  \lim_{j \to \infty} \langle f,\phi_j\rangle
=    \langle f,\phi\rangle.
\]

Finally, assume that $u$ is a solution of $\LL u =f$ in $\Om \setm E$.
Then it is both a supersolution and a subsolution in $\Om \setm E$, and thus 
in $\Om$ by the above.
Hence, it is a solution of $\LL u =f$ in $\Om$.
\end{proof}

\begin{remark} \label{rmk-remove-sharp}
Theorem~\ref{thm-remove} is sharp, at least if $\Om$ is bounded and $f \equiv 0$. 
Indeed, assume that $\Om$ is bounded and let 
$E \subset \Om$ be a relatively closed set with $\Csp(E)>0$.
Then $E=\bigcup_{j=1}^\infty K_j$, where $K_j=\{x \in E : \dist(x,\Omc) \ge  1/j\}$.
It thus follows from 
the countable subadditivity of $\Csp$
(see Proposition~\ref{prop-Cpt-subadd})  
that 
there is $j$ such that the compact subset $K=K_j \Subset E$ has $\Csp(K)>0$.
We consider the capacitary $\LL$-potential 
$u$ of $K$ in $\Omega$, 
i.e.\ $u=H_{\Omega \setminus K}\psi$ 
(in the notation of Theorem~\ref{thm-DP} below), 
where $\psi \in C^\infty_c(\Omega)$ is such that $\psi=1$ on $K$. 
In particular, $u$ is a bounded solution of $\LL u =0$ in $\Om \setm E$
and a supersolution of $\LL u =0$ in $\Om$.

Suppose that $u$ is a solution of $\LL u=0$ in $\Omega$. 
We may assume that $u$ is  $\LL$-harmonic in $\Omega$. 
Lemma~\ref{lem-Vsp-inclusion} shows that $u-\psi \in V^{s,p}_0(\Omega)$. 
Then $u=H_\Omega\psi$. 
It now follows from the 
comparison principle (Theorem~\ref{thm-comparison}) 
that $u \equiv 0$ in $\Rn$. However, \cite[Lemma~2.16(iii)]{KLL23} shows that 
$\csp(K, \Omega) \simeq
\E(u, u) = 0$, which contradicts  the assumption 
$C_{s,p}(K)>0$ by Lemma~\ref{lem-cp-Cp}.
Hence $u$ cannot be
a solution of $\LL u=0$ in $\Om$,
which shows the sharpness for solutions.

Let now $v=-u$ which is also a solution of 
$\LL v=0$ in $\Om \setminus E$.
Since $u$ is a supersolution, but not a solution, it cannot
be a subsolution of $\LL u=0$ in $\Om$.
Hence $v=-u$ is a solution of $\LL v=0$ in $\Om \setm E$ 
but not a supersolution of $\LL v=0$ in $\Om$.

This also shows that Theorem~\ref{thm-remove-bdd} below
is sharp, at least when $\Om$ is bounded.
\end{remark}

To prove the trichotomy for Perron solutions we need
the following removability result for bounded $\LL$-harmonic functions.
It is a special case of the main result in 
Kim--Lee~\cite{KL-rem}.
Unlike in Theorem~\ref{thm-remove}, here $u$ is not assumed to
belong to $\Vsp(\Om\setm E)$, it only needs to be bounded.

\begin{thm} \label{thm-remove-bdd}
\textup{(\cite[Theorem~1.1]{KL-rem})}
Assume that $\Om$ is a nonempty open set, which may be unbounded.
Let $E\subset\Om$ be a relatively closed set with $\Csp(E)=0$.
Assume that $u$ is a bounded 
$\LL$-harmonic function 
in $\Om\setm E$.
Then $u$ is a solution
of $\LL u=0$ in $\Om$,
and there is an $\LL$-harmonic function $v$ in $\Om$
such that $v=u$ in $\Rn \setm E$ and
$v=u$ a.e.\ in $\Rn$.
\end{thm}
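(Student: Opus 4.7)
The plan is to reduce Theorem~\ref{thm-remove-bdd} to the previously proved Theorem~\ref{thm-remove} (applied with $f\equiv 0$) after upgrading the hypothesis ``bounded $\LL$-harmonic in $\Om\setm E$'' to the global integrability $u\in\Vsp(\Om'\setm E)$ for every bounded $\Om'\Subset\Om$. Since testing against $\phi\in C_c^\infty(\Om)$ only involves a compactly supported region, such a localization suffices to conclude that $u$ is a weak solution on all of $\Om$.

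First I would extend $u$ to $\Rn$: Lemma~\ref{lem-zero-cap} gives $|E|=0$, so defining $u$ to equal an arbitrary bounded value (say $0$) on $E$ yields $u\in L^\infty(\Rn)$. The tail inclusion $u\in L^{p-1}_{sp}(\Rn)$ is then immediate from the integrability of $(1+|y|)^{-n-sp}$.

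The main step is to show that, for any bounded $\Om'\Subset\Om$, one has $u\in\Vsp(\Om'\setm E)$. The $L^p$-part is trivial. The seminorm $\int_{\Om'\setm E}\int_{\Rn}|u(x)-u(y)|^p|x-y|^{-n-sp}\,dy\,dx$ splits naturally: on $\{|x-y|\ge 1\}$ it is bounded by $(2\|u\|_\infty)^p|\Om'|\int_{|z|\ge 1}|z|^{-n-sp}\,dz$, which is finite. The part on $\{|x-y|<1\}$ is the delicate piece; here I would apply the nonlocal Caccioppoli estimate for bounded $\LL$-harmonic functions on a Whitney-type cover of $\Om'\setm E$ and absorb the contribution of the neighbourhood of $E$ via the capacitary cutoffs $\eta_j\to 0$ in $\Wsp(\Rn)$ produced by $\Csp(E)=0$ (in the same spirit as the proof of Theorem~\ref{thm-remove}). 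Summing the Caccioppoli bounds over the Whitney balls, with boundedness of $u$ absorbing both the $L^p$ averages and the tails, then yields a finite energy bound. This uniform estimate across $E$ is the hard part and is the substantive content of \cite{KL-rem}.

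With $u\in\Vsp(\Om'\setm E)$ in hand, Theorem~\ref{thm-remove} applied with $f\equiv 0$ shows that $u$ is a solution of $\LL u=0$ in each such $\Om'$, hence in all of $\Om$ by the localization principle above. Finally, the continuity result cited immediately after Definition~\ref{def-supersol} furnishes a locally H\"older continuous $\LL$-harmonic representative $v$ of $u$ in $\Om$ with $v=u$ a.e.\ in $\Rn$. On $\Om\setm E$ both $u$ and $v$ are continuous, so they coincide pointwise there; outside $\Om$ one has $v=u$ by the extension. Thus $v=u$ on $\Rn\setm E$, completing the proof.
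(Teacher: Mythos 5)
This statement is not proved in the paper: Theorem~\ref{thm-remove-bdd} is simply quoted as \cite[Theorem~1.1]{KL-rem}, and the paper explicitly remarks that, unlike in Theorem~\ref{thm-remove}, here $u$ is \emph{not} assumed to lie in $\Vsp(\Om\setm E)$ but is only required to be bounded. So there is no internal proof to compare your attempt against.

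Your strategy --- first upgrading ``bounded $\LL$-harmonic in $\Om\setm E$'' to $u\in\Vsp(\Om'\setm E)$ for each bounded $\Om'\Subset\Om$, then invoking the $\Vsp$-removability Theorem~\ref{thm-remove} on $\Om'$ and localizing the test functions, and finally passing to the continuous $\LL$-harmonic representative --- is a sensible reduction, and the final continuity step giving $v=u$ on $\Rn\setm E$ is correct. But the crux, namely that a bounded $\LL$-harmonic function in $\Om\setm E$ automatically has locally finite $\Vsp$-energy across the null set $E$, is precisely the nontrivial content of \cite{KL-rem}, and you say so yourself. Your sketch of that step (Caccioppoli on a Whitney cover of $\Om'\setm E$, with the capacitary cutoffs $\eta_j$ absorbing the bad part) is not carried out, and it is not clear it can be made to close as stated: summing local Caccioppoli bounds $\sim r_i^{-sp}\|u\|_{L^p(B_{2r_i})}^p\lesssim\|u\|_\infty^p r_i^{n-sp}$ over Whitney balls near $E$ amounts to controlling a Minkowski-type content $\sum_i r_i^{n-sp}$ of a neighbourhood of $E$, which can diverge even when $\Csp(E)=0$. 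A workable argument almost certainly has to build the capacitary cutoff \emph{into} the Caccioppoli test function rather than sum over a Whitney decomposition, which is a genuinely different (and harder) estimate. As it stands, your proposal reduces Theorem~\ref{thm-remove-bdd} to its own hardest ingredient and leaves that ingredient to the external reference --- which is, in effect, what the paper itself does by citing \cite{KL-rem} directly.
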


\section{The Dirichlet problem and regular boundary points}
\label{sect-DP}

\emph{In the rest of the paper, 
we assume that $\Om$ is a bounded nonempty open set.}

\medskip

For functions $g \in \VspOm$ one can solve
the Dirichlet problem in the Sobolev sense. 
To make it precise we define the Sobolev solution $Hg$ as follows.
Its existence and uniqueness follow from 
Korvenp\"a\"a--Kuusi--Palatucci~\cite[Theorem~9]{KKP17} and
Kim--Lee~\cite[Theorem~4.9]{KL-orlicz}, cf.\ \cite[Theorem~3.3]{BBK1}.
Here and in the rest of the paper, 
we consider 
the equation $\LL u=0$ with zero right-hand side.

\begin{thm} \label{thm-DP}
\textup{(\cite[Theorem~4.9]{KL-orlicz} and \cite[Theorem~9]{KKP17})}
Let $g \in \VspOm$.
Then there is a unique function $Hg := H_\Om g:\R^n \to \eR$ with the following properties\/\textup{:}
\begin{enumerate}
\item
  $Hg$ is $\LL$-harmonic in $\Om$, i.e.\ a continuous solution of $\LL u=0$,
\item
  $Hg-g \in \VspoOm$, and in particular
  $Hg \equiv g$ outside $\Om$.
\end{enumerate}
\end{thm}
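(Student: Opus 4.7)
The plan is to construct $Hg$ by the direct method in the calculus of variations, using the natural nonlocal Dirichlet energy associated with $\LL$. Concretely, I would minimize
\[
J(v) := \frac{1}{p}\iint_{(\Rn\times\Rn)\setm(\Omc\times\Omc)} |v(x)-v(y)|^p\, k(x,y)\,dy\,dx
\]
over the affine admissible class $\mathcal{K} := \{v:\Rn\to\eR : v-g\in\VspoOm\}$. This class is nonempty (it contains $g$), and because every $v\in\mathcal{K}$ coincides with $g$ on $\Omc$, the ellipticity bound \eqref{eq-comp-(x,y)} shows that $J(v)$ is comparable to $[v]_{\Vsp(\Om)}^p$ up to an additive constant depending only on $g|_{\Omc}$; in particular $J(v)<\infty$ for $v\in\mathcal{K}$.

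For existence of a minimizer, I would take a minimizing sequence $\{v_j\}\subset\mathcal{K}$. Writing $w_j := v_j-g\in\VspoOm$, the energy bound gives uniform control of $[w_j]_{\Vsp(\Om)}$; the fractional Poincar\'e inequality \eqref{eq-frac-PI} (extended from $\Lipc(\Om)$ to $\VspoOm$ by density) then produces a matching uniform $L^p(\Om)$-bound, hence uniform boundedness in $\Wsp(\Rn)$. Reflexivity of $\Wsp(\Rn)$ together with the weak closedness of the convex norm-closed subset $\VspoOm$ (Mazur) gives, along a subsequence, $w_j\rightharpoonup w$ in $\VspoOm$. Setting $u:=g+w$, the convexity of $t\mapsto|t|^p$ yields the standard weak lower semicontinuity $J(u)\le\liminf_j J(v_j)$, so $u$ is a minimizer in $\mathcal{K}$.

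To extract the PDE, I would perturb by $\phi\in C_c^\infty(\Om)\subset\VspoOm$: since $u+t\phi\in\mathcal{K}$ for all $t\in\R$, the scalar function $t\mapsto J(u+t\phi)$ is differentiable at $t=0$ with vanishing derivative, which unpacks to $\E(u,\phi)=0$. Differentiation under the integral is justified by dominated convergence with the majorant $C(|u(x)-u(y)|+|\phi(x)-\phi(y)|)^{p-1}|\phi(x)-\phi(y)|\,k(x,y)$, integrable by H\"older. This is exactly the weak-solution identity of Definition~\ref{def-supersol} with $f\equiv 0$, so $u$ is a solution of $\LL u=0$ in $\Om$. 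The cited local H\"older regularity for such solutions (Di Castro--Kuusi--Palatucci~\cite{DCKP16}) provides a continuous representative on $\Om$; I take this representative as $Hg$, which preserves (b) since modifying on a null set does not affect the class of $Hg-g$ in $\VspoOm$.

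For uniqueness, convexity of $J$ upgrades the Euler--Lagrange equation to a sufficient condition for minimality: if $u\in\mathcal{K}$ satisfies (a), then for any $v\in\mathcal{K}$ one has $\phi:=v-u\in\VspoOm$ and by convexity $J(v)\ge J(u)+\E(u,\phi)=J(u)$. Since $p>1$, strict convexity of $t\mapsto|t|^p$ makes $J$ strictly convex on $\mathcal{K}$: if $v_1\ne v_2$ both lie in $\mathcal{K}$, they agree on $\Omc$ but differ on a subset of $\Om$ of positive measure, forcing $J((v_1+v_2)/2)<(J(v_1)+J(v_2))/2$. Hence the minimizer, and therefore the function satisfying (a) and (b), is unique. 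The main technical obstacle is the weak lower semicontinuity of the \emph{nonlocal} energy on $\mathcal{K}$, because the mixed interaction region $\Om\times\Omc$ couples the free interior values to the prescribed exterior data; however, the constraint $v-g\in\VspoOm$ localizes all variability inside $\Om$ and reduces the issue to weak lower semicontinuity of a convex integral functional in $w=v-g$, which is classical.
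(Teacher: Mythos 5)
Your proof is correct, and it follows the standard variational route that the cited references ([KL-orlicz, Theorem~4.9] and [KKP17, Theorem~9]) use for this existence and uniqueness result: minimize the nonlocal Dirichlet energy over the affine class $\{v : v - g \in V^{s,p}_0(\Om)\}$, obtain a minimizer by the direct method using the fractional Poincar\'e inequality \eqref{eq-frac-PI} together with reflexivity and weak lower semicontinuity of the convex functional, derive the Euler--Lagrange identity $\E(u,\phi)=0$, invoke interior H\"older regularity from~\cite{DCKP16} for the continuous representative, and get uniqueness from strict convexity of $t\mapsto|t|^p$ (using that two admissible competitors agree on $\Omc$, which has positive measure, to propagate strictness). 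The one place where you are slightly terse is in the uniqueness step: to conclude $J(v)\ge J(u)+\E(u,v-u)$ you need $\E(u,\phi)=0$ for all $\phi\in\VspoOm$, not only $\phi\in C_c^\infty(\Om)$; this follows because $\phi\mapsto\E(u,\phi)$ is a bounded linear functional on $\VspoOm$ (H\"older, as in your dominated-convergence majorant) and $C_c^\infty(\Om)$ is dense there by definition. With that remark filled in, the argument is complete and matches the approach behind the cited statement.
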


\begin{thm}  \label{thm-comparison}
\textup{(Comparison principle for $H$-solutions, \cite[Corollary~3.6]{BBK1})}
Let $g_1, g_2 \in \Vsp(\Om)$ be such that 
$(g_1-g_2)_\limplus \in \Vsp_0(\Om)$.
Then $Hg_1 \le Hg_2$ in $\R^n$.
\end{thm}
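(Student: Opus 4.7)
The plan is to adapt the classical monotonicity-based comparison argument for nonlinear operators of $p$-Laplace type. I would set $w := Hg_1 - Hg_2$ (with the convention that $Hg_i \equiv g_i$ on $\Omc$, since $Hg_i - g_i \in \VspoOm$) and use $\phi := w_\limplus$ as the test function. Outside $\Om$, the hypothesis $(g_1 - g_2)_\limplus \in \VspoOm$ forces $g_1 \le g_2$ pointwise on $\Omc$, so $w \le 0$ and $\phi \equiv 0$ there, giving the desired inequality on $\Omc$ for free.

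Next, I would verify that $\phi \in \VspoOm$, so that it is admissible in both weak formulations. Using the decomposition
$$w = \bigl((Hg_1 - g_1) - (Hg_2 - g_2)\bigr) + (g_1 - g_2),$$
the first summand lies in $\VspoOm$ by Theorem~\ref{thm-DP}, and $(g_1 - g_2)_\limplus \in \VspoOm$ by assumption. Appealing to the stability of $\VspoOm$ under the positive-part operation (a consequence of the Lipschitz action of $t \mapsto t_\limplus$ on Gagliardo integrals combined with smooth approximation), one concludes $\phi \in \VspoOm$. Testing both $\E(Hg_1,\cdot) = 0$ and $\E(Hg_2,\cdot) = 0$ against $\phi$ and subtracting then produces the integral identity
$$\int_{\R^n}\int_{\R^n} \bigl(J_p(Hg_1(x) - Hg_1(y)) - J_p(Hg_2(x) - Hg_2(y))\bigr)(\phi(x) - \phi(y))\,k(x,y)\,dy\,dx = 0,$$
with $J_p(t) := |t|^{p-2}t$.

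A case analysis on the signs of $w(x)$ and $w(y)$, using the strict monotonicity of $J_p$ and positivity of $k$ from \eqref{eq-comp-(x,y)}, shows that the integrand above is pointwise nonnegative and vanishes only when $w(x) = w(y)$ or $\phi(x) = \phi(y) = 0$. Since the integral is zero, the integrand must vanish a.e. This forces $w$ to be essentially constant on $\{w > 0\}$; combined with $\phi \equiv 0$ on $\Omc$ and the finiteness of the Gagliardo seminorm of $\phi$ (which precludes any nontrivial jump across $\bdy\Om$), I would conclude $\phi \equiv 0$ a.e.\ in $\R^n$, i.e., $Hg_1 \le Hg_2$ a.e. Continuity of $Hg_i$ inside $\Om$ then upgrades this to the pointwise inequality there.

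The main obstacle is the lattice step $\phi \in \VspoOm$: unlike in the local setting $W^{1,p}_0(\Om)$, closure of $\VspoOm$ under the positive-part operation is not automatic and requires a dedicated smooth approximation argument handling the cross-boundary Gagliardo integrals, together with a density/convergence argument to extend $\E(Hg_i,\cdot) = 0$ from test functions in $C_c^\infty(\Om)$ to the element $\phi \in \VspoOm$. Once this is in place, the rest is the standard nonlocal analogue of the $p$-Laplace comparison principle.
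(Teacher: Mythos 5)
Your overall strategy (test with $\phi := (Hg_1-Hg_2)_\limplus$, subtract the two weak formulations, and use the strict monotonicity of $t\mapsto |t|^{p-2}t$ to force the integrand to vanish a.e., whence $\phi$ is a.e.\ constant and, vanishing on $\Omc$, identically zero) is the right one and matches the standard line of attack. The monotonicity case analysis and the final step are sound, up to a slight misstatement: what one gets is that $\phi(x)=\phi(y)$ for a.e.\ $(x,y)\in\R^n\times\R^n$, i.e.\ $\phi$ is a.e.\ constant on all of $\R^n$, not merely ``essentially constant on $\{w>0\}$''; the conclusion $\phi\equiv 0$ then follows because $\phi=0$ on $\Omc$.

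However, there is a genuine gap in the crucial step $\phi\in\VspoOm$, and it is not the gap you flag. Writing $w=v+h$ with $v=(Hg_1-g_1)-(Hg_2-g_2)\in\VspoOm$ and $h=g_1-g_2$ with $h_\limplus\in\VspoOm$, you claim that closure of $\VspoOm$ under $t\mapsto t_\limplus$ lets one ``conclude $\phi\in\VspoOm$''. This is a non sequitur: the positive part does not distribute over sums, so $w_\limplus=(v+h)_\limplus$ is neither $v_\limplus+h_\limplus$ nor $(v+h_\limplus)_\limplus$. Lattice-closure of $\VspoOm$ only gives you that $\psi:=(v+h_\limplus)_\limplus\in\VspoOm$ and the pointwise bound $0\le w_\limplus\le\psi$ (since $h\le h_\limplus$). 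To pass from this sandwiching to the membership $w_\limplus\in\VspoOm$ one needs an additional, and distinctly nontrivial, order-ideal (``solidness'') property of $\VspoOm$: if $u\in\Vsp(\Om)$ vanishes outside $\Om$ and $0\le u\le\psi$ for some $\psi\in\VspoOm$, then $u\in\VspoOm$. This is a separate structural lemma about the fractional zero space, not a corollary of the Lipschitz action of the positive part on Gagliardo seminorms, and it is the real technical heart of the proof of this comparison principle (the paper delegates it to \cite[Corollary~3.6]{BBK1} precisely because the underlying lemma is established there). In addition, as you do note, extending the weak identity $\E(Hg_i,\cdot)=0$ from compactly supported test functions to $\phi\in\VspoOm$ requires a density/continuity argument using $Hg_i\in\Vsp(\Om)$; that part is routine once one has the right formulation. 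As written, your proposal asserts $\phi\in\VspoOm$ without a valid derivation, so the proof is incomplete at its central step.
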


\begin{deff}
  A boundary point $x_0 \in \bdy \Om$ is \emph{regular} (with respect to $\LL$
and $\Om$)
  if 
  \[
    \lim_{\Om \ni x \to x_0} H g(x)=g(x_0)
    \quad \text{for every } g \in \VspOm \cap C(\R^n).
  \]  
  Otherwise we say that $x_0$ is \emph{irregular}.
\end{deff}  

This definition was used by Kim--Lee--Lee~\cite{KLL23}.
It
was called Sobolev regularity in
Bj\"orn--Bj\"orn--Kim~\cite{BBK1},
but one of the main results in~\cite[Theorem~1.7]{BBK1}
showed that it is equivalent to regularity for 
Perron solutions. 
We therefore call it just regular here.

In order to define Perron solutions we need $\LL$-superharmonic functions,
which we next define.
We follow Korvenp\"a\"a--Kuusi--Palatucci~\cite[Definition~1]{KKP17},
see also~\cite[Section~8]{BBK1}.

\begin{deff} \label{def:superharmonic}
A measurable function $u: \R^n \to \eR$ is \emph{$\LL$-superharmonic}  
in $\Omega$ if it satisfies the following properties:
\begin{enumerate}
\item
$u < \infty$ almost everywhere in $\R^{n}$ and $u>-\infty$ everywhere in $\Omega$,
\item
$u$ is lower semicontinuous  in $\Omega$,
\item
for each open set $G \Subset \Omega$ and each solution $v \in C(\clG)$ of $\mathcal{L}v=0$ 
in $G$ satisfying $v_\limplus \in L^{\infty}(\R^{n})$ and $v \le u$ on  $\Gc$,
it holds that $v \le u$ in $G$,
\item
$u_\limminus \in L^{p-1}_{sp}(\R^{n})$.
\end{enumerate}
A function $u$ is  \emph{$\LL$-subharmonic} in $\Omega$ if $-u$ is $\LL$-superharmonic in $\Omega$.
\end{deff}

We are now  ready to define the Perron solutions to the Dirichlet problem. 
We follow the definition in our paper~\cite[Definition~1.1]{BBK1}.
For bounded continuous $g$ it coincides with the definition in the earlier
paper by Lindgren--Lindqvist~\cite[Section~4]{LL17}
(who however only considered the fractional \p-Laplacian with kernel
$k(x,y)=|x-y|^{-n-sp}$).
For $g \in L^{p-1}_{sp}(\R^n)$,
it also (essentially) coincides
with the definition in 
Korvenp\"a\"a--Kuusi--Palatucci~\cite[Definition~2]{KKP17},
by Theorem~9.2 in~\cite{BBK1}.

\begin{deff} \label{def-Perron}
Let $g:\Omc \to \eR$. 
The \emph{upper class}  $\mathcal{U}_{g}=\mathcal{U}_{g}(\Om)$ 
of $g$ consists of all functions 
$u: \R^{n} \to (-\infty,\infty]$ such that
\begin{enumerate}
\renewcommand{\theenumi}{\textup{(\roman{enumi})}}%
\renewcommand{\labelenumi}{\theenumi}%
\item
$u$ is $\LL$-superharmonic in $\Omega$,
\item \label{P-b}
$u$ is bounded from below in $\Om$,
\item \label{P-c}
$\liminf_{\Om \ni y \to x} u(y) \geq g(x)$ for all $x \in \partial \Omega$,
\item \label{P-d}
$u \ge g$ in $\Omc$.
\end{enumerate}

The \emph{upper Perron solution} of $g$ is defined by
\[ 
    \uP g (x)= \uP_\Om g (x) = \inf_{u \in \UU_g}  u(x), \quad x \in \R^n,
\]
and the \emph{lower Perron solution} of $g$ by 
$\lP g = - \uP (-g)$.
(As usual, $\inf \emptyset = \infty$.)

We usually omit $\Om$ from the notation.
When $\lP g = \uP g$  we denote the common solution by $Pg$ and say that
$g$ is \emph{resolutive}.
\end{deff}

In this paper, we are primarily interested in Perron solutions
for bounded functions.
The following result summarizes the results on Perron solutions 
needed in this paper.
(Note that bounded measurable functions belong to the tail space
$L^{p-1}_{sp}(\Rn)$.)

\begin{thm} \label{thm-Perron}
\textup{(\cite[Theorems~1.2, 1.4 and~9.4]{BBK1})}
Let $g:\Rn \to \R$ 
be a bounded measurable function
which is continuous at every $x\in\bdy\Om$.
Then $g$ is resolutive.

If also $g \in \Vsp(\Om)$, then $Pg=Hg$.
\end{thm}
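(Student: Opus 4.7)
I plan to prove Theorem~\ref{thm-Perron} in three stages of increasing generality in $g$, each reducing to the previous by approximation. The two key ingredients are the Kellogg property (Theorem~\ref{thm-kellogg} from~\cite{BBK1}), which says the set $I\subset\bdy\Om$ of irregular boundary points has $\Csp(I)=0$, and the removability results of Section~\ref{sect-rem}.

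\emph{Stage 1.} For $g\in\VspOm\cap C(\R^n)$ I would show $\uP g=\lP g=Hg$. By Theorem~\ref{thm-DP}, $Hg$ is $\LL$-harmonic in $\Om$, equals $g$ outside $\Om$, and by definition of regularity attains $g$ continuously at every regular $x_0\in\bdy\Om$; by the Kellogg property only the negligible set $I$ can fail. To prove $\uP g\le Hg$ I would, for each $\eps>0$, construct a bounded nonnegative $\LL$-superharmonic barrier $v_\eps$ with $\liminf_{\Om\ni x\to x_0}v_\eps(x)\ge 2\|g\|_{L^\infty}$ for every $x_0\in I$ and with $\Vsp$-seminorm less than $\eps$; such $v_\eps$ exists because $\Csp(I)=0$ allows small-norm capacitary potentials of $I$. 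Then $Hg+v_\eps\in\UU_g$, so $\uP g\le Hg+v_\eps$; letting $\eps\to 0$ along a subsequence with $v_\eps\to 0$ except on a capacity-zero set, and invoking Theorem~\ref{thm-remove-bdd} to absorb that residual, yields $\uP g\le Hg$. The argument applied to $-g$ gives $\lP g\ge Hg$, hence equality.

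\emph{Stage 2.} For bounded $g\in C(\R^n)$, I would approximate $g$ uniformly on $\Omc$ and in the $L^{p-1}_{sp}$-tail by $g_j\in\VspOm\cap C(\R^n)$, e.g.\ by multiplying by smooth cutoffs and mollifying; Lipschitz functions with compact support lie in $\VspOm$ by~\eqref{eq-frac-PI}. If $\|g-g_j\|_{L^\infty(\Omc)}\le\delta_j\to 0$, then monotonicity of the Perron classes together with Stage~1 sandwich $\lP g$ and $\uP g$ between $Pg_j\pm\delta_j$, giving resolutivity. When additionally $g\in\Vsp(\Om)$, the comparison principle (Theorem~\ref{thm-comparison}) together with uniqueness in Theorem~\ref{thm-DP} shows that $Hg_j\to Hg$ in $\Om$, identifying $Pg$ with $Hg$.

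\emph{Stage 3.} Given bounded measurable $g$ continuous at every $x\in\bdy\Om$, approximate $g$ by bounded $g_j\in C(\R^n)$ that converge to $g$ uniformly on $\bdy\Om$ and in the tail sense $\int_{\R^n}|g-g_j|^{p-1}(1+|y|)^{-n-sp}\,dy\to 0$; such $g_j$ exist by standard mollification techniques that respect the continuity on $\bdy\Om$. Apply Stage~2 to each $g_j$ and use stability of $\LL$-superharmonic functions under tail-type $L^{p-1}_{sp}$-convergence to pass to the limit and deduce resolutivity of $g$, with $Pg=Hg$ when $g\in\Vsp(\Om)$.

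The principal obstacle is Stage~1: $Hg$ does not a priori lie in $\UU_g$ because of possible failure at irregular boundary points, so the Sobolev and Perron frameworks must be reconciled across the capacity-zero set $I$. The Kellogg property together with the removability theorem supplies the quantitative bridge, but constructing the small-norm barrier $v_\eps$ with the right one-sided boundary behaviour along $I$ requires careful capacity-theoretic estimates, and this is where the main technical effort lies.
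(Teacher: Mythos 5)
The crux of your Stage 1 fails for $p\neq 2$ because of the nonlinearity. You propose to put $Hg+v_\eps$ in the upper class $\UU_g$, where $Hg$ is $\LL$-harmonic and $v_\eps$ is an $\LL$-superharmonic barrier. Membership in $\UU_g$ requires $Hg+v_\eps$ to be $\LL$-superharmonic, but the sum of an $\LL$-harmonic function and an $\LL$-superharmonic function is \emph{not} in general $\LL$-superharmonic when $p\neq 2$: the defining comparison in Definition~\ref{def:superharmonic}(c) does not pass to sums because $v-Hg$ is not a solution of a nonlinear equation when $v$ is. (The operator $\LL$ is translation-invariant only under adding \emph{constants}.) This additive-barrier technique is a staple of linear potential theory and works when $p=2$, but it breaks at exactly the step you call "the main technical effort". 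Since Stages 2 and 3 reduce to Stage 1, the whole argument collapses for the nonlinear case. Also note that this paper does not prove Theorem~\ref{thm-Perron}; it is cited verbatim from~\cite{BBK1} (Theorems~1.2, 1.4 and~9.4 there), where the resolutivity is established by a genuinely nonlinear route, not by barriers.

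Your secondary appeal to Theorem~\ref{thm-remove-bdd} "to absorb the residual" is also unclear: that theorem removes a relatively closed set of zero capacity from the domain of an $\LL$-harmonic function, and it is not apparent what removable-singularity statement would be applied to the family $\{v_\eps\}$ or to the residual set where $v_\eps$ fails to vanish. Finally, in Stage 3 the passage from $g\in C(\R^n)$ to $g$ merely measurable and continuous on $\bdy\Om$ hinges on an unproved "stability of $\LL$-superharmonic functions under $L^{p-1}_{sp}$-tail convergence"; Perron solutions are sensitive to the behaviour of $g$ throughout $\Omc$ via the nonlocal kernel, so uniform convergence on $\bdy\Om$ plus tail-norm control does not obviously force $\uP g_j\to\uP g$, and this would itself require a careful argument.
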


The main result in Kim--Lee--Lee~\cite{KLL23} is
the Wiener criterion for boundary regularity;
see also the correction for $sp>n$ in 
\cite[Remark~1.5]{KLL25}.
Several other characterizations 
of regular boundary points
were given in \cite{BBK1}.
The following result summarizes the characterizations needed in this paper.
In \ref{a-contx0} we slightly improve upon the corresponding
characterization 
from Theorem~4.5 in~\cite{BBK1}.
This will be important later on.

\begin{thm}\label{thm-main-reg}
\textup{(\cite[Theorem~1.1 and its proof]{KLL23} and~\cite[Theorems~1.7 and~4.5]{BBK1})}
  Let $x_0 \in \bdy \Om$.
Then the following are equivalent\/\textup{:}
\begin{enumerate}
\item \label{a-reg}
  $x_0$ is regular for $\Om$,
\item \label{a-Wiener}
\begin{equation} \label{eq-Wiener-w-csp}
  \int_0^1 \biggl(
  \frac{\csp(\itoverline{B(x_0,\rho)}\setm \Om,B(x_0,2\rho))}{\rho^{n-sp}}
          \biggr)^{1/(p-1)} \, \frac{d\rho}{\rho} = \infty,
\end{equation}
\item \label{a-d}
\[
    \lim_{\Om \ni x \to x_0} Hd_{x_0}(x)=0,
\quad \text{where }   d_{x_0}(x):=\min\{1,|x-x_0|\},
\]  
\item \label{a-contx0} 
  \[
    \lim_{\Om \ni x \to x_0} Hg(x)=g(x_0)
  \]  
for every 
$g \in \VspOm$ that is 
continuous at $x_0$,
\item \label{a-contx0-P}
  \begin{equation} \label{eq-a-contx0-P}
    \lim_{\Om \ni x \to x_0} \uP g(x)=g(x_0)
  \end{equation}
for every bounded $ g:\Omc\to \R$ that 
is continuous at $x_0$.
\end{enumerate}  
\end{thm}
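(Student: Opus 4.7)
Most of the equivalences in Theorem~\ref{thm-main-reg} are already available in the literature. Specifically, (a) $\Leftrightarrow$ (b) is the Wiener criterion of Kim--Lee--Lee~\cite{KLL23}, while (a) $\Leftrightarrow$ (c) and (a) $\Leftrightarrow$ (e) are stated in \cite[Theorem~1.7]{BBK1}. Moreover, the weaker version of (d) with test functions $g\in\VspOm\cap C(\R^n)$ is \cite[Theorem~4.5]{BBK1} and is equivalent to (a). So the only new content is the strengthening in (d): allowing $g$ to be continuous only at $x_0$. Since continuity on all of $\R^n$ is stronger than continuity at $x_0$, the direction (d) $\Rightarrow$ (a) is automatic, and the entire task reduces to showing (a) $\Rightarrow$ (d).

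For this direction, assume $x_0$ is regular and $g\in\VspOm$ is continuous at $x_0$. After subtracting the constant $g(x_0)$ (which lies in $\VspOm$ since $\Om$ is bounded, and shifts $Hg$ by the same constant as $\LL$ annihilates constants), we may assume $g(x_0)=0$. Fix $\eps>0$ and choose $\delta\in(0,1)$ with $|g|<\eps$ on $B(x_0,\delta)$. The strategy is a sandwich argument: construct $\bar g_\eps,\,\lP g_\eps \in \VspOm\cap C(\R^n)$ with
\[
\lP g_\eps \le g \le \bar g_\eps \text{ on } \Omc,
\qquad |\bar g_\eps(x_0)|,\,|\lP g_\eps(x_0)| \le C\eps,
\]
then apply the comparison principle (Theorem~\ref{thm-comparison}) and the already-known equivalence of~(a) with the weaker version of~(d) to the functions $\bar g_\eps$ and $\lP g_\eps$. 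The squeeze $\limsup_{x\to x_0}Hg(x)\le C\eps$ and $\liminf_{x\to x_0}Hg(x)\ge -C\eps$ then yields (d) upon sending $\eps\to 0$.

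The main technical obstacle is the construction of the continuous envelopes, because $g$ may be unbounded and wildly discontinuous away from $x_0$, so neither global truncation nor naive mollification directly produces a continuous majorant in $\VspOm$. To circumvent this I would not insist on pointwise $\ge g$ everywhere, but only on $\Omc$, and build the envelope $\bar g_\eps$ as
\[
\bar g_\eps = \eps\,\eta + \phi,
\]
where $\eta\in\Lipc(B(x_0,\delta))$ equals $1$ near $x_0$, and $\phi\in\VspOm\cap C(\R^n)$ is a continuous majorant of $(g-\eps)_\limplus$ (which vanishes on $B(x_0,\delta)$). A candidate for $\phi$ can be produced by mollifying a slightly enlarged envelope of $(g-\eps)_\limplus$: since $(g-\eps)_\limplus \in \VspOm$ and vanishes on $B(x_0,\delta/2)$, one can bound it pointwise by a continuous function in $\VspOm$ using sup--convolution on compact sets together with the tail control from $g\in L^{p-1}_{sp}(\R^n)$. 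The envelope $\lP g_\eps$ is constructed symmetrically.

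The alternative, probably cleaner approach is to invoke~(e) directly: truncate $g$ to a bounded $g_M$, apply~(e) to $g_M$ (which is bounded and continuous at $x_0$ with $g_M(x_0)=g(x_0)=0$), and compare $Hg$ with $\uP g_M$ via comparison and the resolutivity statement in Theorem~\ref{thm-Perron}. The tail integrability coming from $g\in\VspOm$ ensures that the error introduced by truncation is controlled uniformly on $\Omc$ as $M\to\infty$. Whichever route is taken, the crux is that continuity of $g$ at $x_0$ is precisely what is needed to push the already-established characterization~\ref{a-contx0} from $g\in\VspOm\cap C(\R^n)$ to the larger class.
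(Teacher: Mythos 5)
You correctly isolate the only new content: upgrading the class of test functions in~\ref{a-contx0} from $\VspOm\cap C(\Rn)$ (as in \cite[Theorem~4.5]{BBK1}) to $g\in\VspOm$ merely continuous at $x_0$, and you rightly note that \ref{a-contx0}$\Rightarrow$\ref{a-reg} is automatic. The paper's proof of the remaining implication is, however, completely different and much shorter than what you propose: it simply inspects the proof of the sufficiency part of the Wiener criterion in \cite{KLL23} and observes that global continuity of $g$ is invoked only once (on p.~1979), at a point where continuity at $x_0$ alone suffices, so that \ref{a-Wiener}$\Rightarrow$\ref{a-contx0} follows verbatim. Your proposal instead attempts a self-contained deduction from the known equivalences, and both of your variants contain genuine gaps.

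In the sandwich variant, the decomposition $\bar g_\eps=\eps\,\eta+\phi$ does not dominate $g$ on $\Omc$: on the part of $B(x_0,\delta)$ where $\eta$ has already dropped to $0$ but $g\in(0,\eps)$, you would have $\bar g_\eps=\phi$, and $\phi$ is only required to majorize $(g-\eps)_\limplus$, which vanishes there. Replacing $\eps\eta$ by the constant $\eps$ fixes that arithmetic, but the deeper issue remains: you would need to exhibit a continuous $\phi\in\VspOm\cap C(\Rn)$ with $\phi\ge(g-\eps)_\limplus$ on $\Omc$. Producing a continuous $\Vsp$-majorant of an arbitrary nonnegative $\Vsp$ function (unbounded and wildly discontinuous away from $x_0$) is not at all routine; sup-convolution and mollification do not obviously give pointwise domination while staying in $\Vsp$, and you do not carry this out. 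In the truncation variant, the claim that ``the error introduced by truncation is controlled uniformly on $\Omc$ as $M\to\infty$'' is false: if $g$ is unbounded then $\sup_{\Omc}|g-g_M|=\infty$ for every $M$. What does go to zero is $\|g-g_M\|_{\Vsp(\Om)}$, but because $H$ is nonlinear this does not give pointwise control of $Hg-Hg_M$; the paper itself resorts to the additional hypothesis $p=2$ exactly for such a truncation step (Theorem~\ref{thm-str-irr-linear}), precisely because $H(g_1+g_2)=Hg_1+Hg_2$ is unavailable for $p\ne2$. Moreover, $g_M$ inherits the discontinuities of $g$ away from $x_0$, so applying~\ref{a-contx0-P} gives information about $\uP g_M$, which cannot be transferred to $Hg_M$ (let alone $Hg$) via Theorem~\ref{thm-Perron} unless $g_M$ is continuous on all of $\bdy\Om$. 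In short, neither of your routes is complete as written, and the obstructions are serious; the intended argument is the inspection of the \cite{KLL23} proof.
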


\begin{proof}
\ref{a-reg}\eqv\ref{a-Wiener}\eqv\ref{a-d}\eqv\ref{a-contx0-P}.
This follows from Theorem~1.1 in~\cite{KLL23} and 
Theorem~1.7 in \cite{BBK1}.

\ref{a-contx0}\imp\ref{a-reg} 
This is trivial.

\ref{a-Wiener}\imp\ref{a-contx0}
In \cite{KLL23}, it is assumed
that $g \in C(\Rn) \cap \Vsp(\Om)$,
but the proof of the sufficiency of the Wiener criterion
(i.e.\ \ref{a-Wiener}\imp\ref{a-reg})
only uses
continuity in the second sentence on p.~1979,
and there it is enough to know that $g$ is continuous at $x_0$.
Hence \ref{a-Wiener}\imp\ref{a-contx0}.
\end{proof}

\begin{remark} \label{rmk-bdd-P-cannot-be-dropped}
The boundedness assumption in \ref{a-contx0-P} cannot be dropped.
To see this consider, e.g., $g(x)=e^{|x|} \in C(\Rn)$.
It follows from Theorem~14 in 
Korvenp\"a\"a--Kuusi--Palatucci~\cite{KKP17}
that every $\LL$-superharmonic belongs to $L^{p-1}_{sp}(\Rn)$,
and since $0 \le g \notin L^{p-1}_{sp}(\Rn)$, the upper class
$\UU_g(\Om) = \emptyset$ (for every $\Om$) and thus
$\uP g  \equiv \infty$
and \eqref{eq-a-contx0-P} fails.
This is regardless of if $x_0$ is regular or not.
Hence the boundedness assumption in \ref{a-contx0-P} cannot be dropped.

Similarly, 
the boundedness assumptions in 
Theorem~\ref{thm-semi-d}\ref{d-semi-P}, \ref{d-semi-P-gen},
Theorem~\ref{thm-str-irr}\ref{st-bdd-P}
and in \eqref{eq-univ-P}   
of Theorem~\ref{thm-univ-seq}
cannot be dropped.
\end{remark}

The following Kellogg property
will be crucial when proving Theorem~\ref{thm-trich}.

\begin{thm}\label{thm-kellogg}
\textup{(Kellogg property, \cite[Theorem~1.5]{BBK1})}
Let $I_\Om\subset\bdy\Om$ be the set of irregular boundary points for 
$\LL u=0$  in $\Om$.
  Then  $\Csp(I_\Om)=0$.
\end{thm}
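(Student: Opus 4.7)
The plan is a proof by contradiction using the characterization of regular boundary points via the distance test function from Theorem~\ref{thm-main-reg}\ref{a-d}. That characterization says $x_0\in\bdy\Om$ is regular if and only if $\lim_{\Om\ni x\to x_0} Hd_{x_0}(x)=0$, where $d_{x_0}(x)=\min\{1,|x-x_0|\}$. Since $d_{x_0}\ge 0$ on $\Omc$, the comparison principle (Theorem~\ref{thm-comparison}) gives $Hd_{x_0}\ge 0$ in $\Om$, so for any irregular $x_0$ one must have $\limsup_{\Om\ni x\to x_0} Hd_{x_0}(x)>0$. Writing $I_\Om=\bigcup_{k=1}^\infty I_k$ with $I_k:=\{x_0\in\bdy\Om:\limsup_{\Om\ni x\to x_0}Hd_{x_0}(x)>1/k\}$ and invoking the countable subadditivity of $\Csp$ (Proposition~\ref{prop-Cpt-subadd}), it suffices to prove $\Csp(I_k)=0$ for each fixed $k$. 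Assuming the contrary, the outer-regular definition in~\eqref{eq-cap-E} together with a standard Choquet-type capacitability argument lets me pass to a compact subset $K\subset I_k$ with $\Csp(K)>0$.

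Next I would construct a barrier from the capacitary potential of $K$. Fix a bounded open set $\Om'\Supset\clOm$ and $\psi\in C_c^\infty(\Om')$ with $\psi\equiv 1$ in a neighbourhood of $K$. Let $u:=H_{\Om'\setm K}\psi$, which by Theorem~\ref{thm-DP} is the unique Sobolev solution of $\LL u=0$ in $\Om'\setm K$ with exterior datum $\psi$; then $0\le u\le 1$ in $\Rn$, $u\equiv 1$ on $K$, and $u\equiv 0$ outside $\Om'$. A quasi-continuity argument, applied to the Sobolev representative of $u$ and combined with Lemma~\ref{lem-cp-Cp} relating $\Csp$ and $\csp$, shows that $\liminf_{x\to x_0,\,x\in\Om'\setm K}u(x)=1$ for $\Csp$-q.e. $x_0\in K$. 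Since $\Csp(K)>0$, such a point $x_0\in K\subset I_k$ exists, and by restricting to approach sequences inside $\Om$ (which fills $\Om'\setm K$ locally near $x_0\in\bdy\Om$), one gets $\limsup_{\Om\ni x\to x_0}u(x)=1$.

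The final step is to compare $Hd_{x_0}$ with the barrier $1-u$ at this $x_0$ and derive a contradiction with $x_0\in I_k$. The heuristic is that $1-u$ is a nonnegative $\LL$-subharmonic function in $\Om'\setm K$ that decays to $0$ along $\Om$ as $x\to x_0$, so it forces $Hd_{x_0}$ to decay as well, e.g. via a comparison of the form $Hd_{x_0}(x)\le \eps + C(1-u(x))$ in a suitable one-sided sense, for every $\eps>0$. This contradicts $\limsup Hd_{x_0}>1/k$ once $\eps$ is chosen small. The main obstacle is precisely carrying out this comparison rigorously: in the nonlinear nonlocal setting, sums and constants do not combine with $\LL$-solutions as cleanly as in the linear case, so one must either work with minima of supersolutions (which are again supersolutions) or build an explicit supersolution majorant that absorbs both the tail of $u$ outside $\Om'$ and the boundary data $d_{x_0}$. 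The removability result (Theorem~\ref{thm-remove}) is also needed to identify the equations satisfied on $\Om'\setm K$ with those on $\Om'$ modulo the capacity-zero exceptional set, and to control the nonlocal tail contribution that distinguishes this proof sharply from its local counterpart.
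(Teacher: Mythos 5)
This theorem is not proved in the present paper: it is quoted verbatim from the authors' earlier work \cite[Theorem~1.5]{BBK1}, so there is no ``paper's own proof'' here to compare your attempt against. Evaluated on its own merits, your sketch contains genuine gaps at three points. First, passing from $\Csp(I_k)>0$ to a compact $K\subset I_k$ with $\Csp(K)>0$ requires a Choquet capacitability theorem (inner regularity for Borel/Suslin sets), which is not established by the paper's three-step outer definition \eqref{eq-cap-G}--\eqref{eq-cap-E}; you invoke it as ``a standard Choquet-type capacitability argument'' without justification. Second, quasi-continuity of $u$ is not enough to conclude $\liminf_{\Om'\setm K\ni x\to x_0}u(x)=1$ for q.e.\ $x_0\in K$: quasi-continuity gives continuity relative to $\Rn\setm G$ for some small open $G$, but the approach region $\Om'\setm K$ can lie entirely inside $G$ near $x_0$. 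What is actually needed is that the lower semicontinuous $\LL$-superharmonic representative $\tilde u$ of the capacitary potential satisfies $\tilde u=1$ q.e.\ on $K$, after which lower semicontinuity gives $\liminf_{x\to x_0}\tilde u(x)\ge\tilde u(x_0)=1$ along any approach region. This is a substantive piece of nonlinear nonlocal potential theory (roughly: the fine-limit behaviour of the capacitary potential), and it is in the same circle of ideas as the Kellogg property itself; your sketch does not supply it.

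Third, and most seriously, the final comparison $Hd_{x_0}\le\eps+C(1-u)$ cannot work as written in the nonlocal setting. The comparison principle (Theorem~\ref{thm-comparison}) requires the competitors to be ordered on \emph{all} of $\Omc$, not just near $x_0$. But $u\equiv 1$ on $K\subset\Omc$, so $1-u\equiv 0$ there; since $d_{x_0}$ is of size $\approx 1$ on the parts of $K$ far from $x_0$, the inequality $\eps+C(1-u)\ge d_{x_0}$ fails on $\Omc$ for any fixed $C$ once $\eps<1$. One must instead build a barrier whose exterior data genuinely dominate $d_{x_0}$ on the whole complement while still tending to $0$ along $\Om$ at $x_0$ — for instance, work with $\max\{d_{x_0},\,\eps+\lambda(1-u)\}$ and exploit that pointwise maxima of $\LL$-subsolutions are $\LL$-subsolutions, or follow the Wiener-integral route (Theorem~\ref{thm-main-reg}\ref{a-Wiener}) and show that q.e.\ $x_0\in\bdy\Om$ satisfies \eqref{eq-Wiener-w-csp}, which is the Hedberg--Wolff strategy. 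You correctly flag this step as the main obstacle, but it is precisely where the proof lives, and the sketch does not overcome it.
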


The following result is a direct
consequence of 
the Kellogg property (Theorem~\ref{thm-kellogg}) and
Lemma~\ref{lem-sp<=n}.
It also follows directly from the Wiener
criterion 
in Kim--Lee--Lee~\cite{KLL23}, \cite{KLL25}.

\begin{cor} \label{cor-sp>n=>reg}
If $sp>n$, then every boundary point $x_0 \in \bdy \Om$ is
regular.
\end{cor}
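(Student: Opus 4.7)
The plan is to combine the two cited facts to reach a direct contradiction under the assumption that some boundary point is irregular. First, the Kellogg property (Theorem~\ref{thm-kellogg}) says that the whole set $I_\Omega \subset \bdy\Om$ of irregular boundary points satisfies $C_{s,p}(I_\Omega) = 0$. Second, Lemma~\ref{lem-sp<=n} tells us that, since the Sobolev capacity $C_{s,p}$ is defined without any reference to the domain, the equivalence $sp \le n \Leftrightarrow C_{s,p}(\{x_0\}) = 0$ holds for every singleton in $\R^n$. Hence, under the standing assumption $sp > n$, one has $C_{s,p}(\{x_0\}) > 0$ for every $x_0 \in \R^n$.

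I would then argue by contradiction: suppose some $x_0 \in \bdy\Om$ is irregular, i.e.\ $x_0 \in I_\Om$. Monotonicity of $C_{s,p}$ follows immediately from the three-step definition \eqref{eq-cap-G}--\eqref{eq-cap-E} (infima over larger families of open supersets only decrease, while each subset of an open set is itself open-coverable by that set), so
\[
    0 \;<\; C_{s,p}(\{x_0\}) \;\le\; C_{s,p}(I_\Om) \;=\; 0,
\]
a contradiction. Therefore $I_\Om = \emptyset$, i.e.\ every boundary point of $\Om$ is regular.

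There is essentially no obstacle; the entire content is packaged into Theorem~\ref{thm-kellogg} and Lemma~\ref{lem-sp<=n}, and the only trivial observation to add is the monotonicity of $C_{s,p}$. As an alternative, one could invoke the Wiener criterion \eqref{eq-Wiener-w-csp} directly: when $sp > n$, standard lower bounds for the condenser capacity force $\csp(\itoverline{B(x_0,\rho)} \setm \Om, B(x_0, 2\rho)) \simeq \rho^{n-sp}$ for small $\rho$ at any $x_0 \in \bdy\Om$, making the Wiener integral trivially divergent; but this route is strictly longer and less transparent than the Kellogg-based argument above.
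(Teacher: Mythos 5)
Your proof is correct and follows exactly the route the paper indicates: combine the Kellogg property (Theorem~\ref{thm-kellogg}) with Lemma~\ref{lem-sp<=n} and monotonicity of $C_{s,p}$ to conclude that $I_\Om = \emptyset$ when $sp>n$. The alternative via the Wiener criterion that you sketch is also acknowledged by the paper as a second route, so both observations match the authors' intent.
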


When $sp \le n$, the centre of a punctured ball is a semiregular boundary point
by Proposition~\ref{prop-S-largest}, while the following example 
provides a strongly irregular boundary point.

\begin{example} \label{ex-strong-irr}
Assume that $sp  \le n$.
Let
$x_j=(\tfrac34 \cdot 2^{-j},0,\dots,0)$, $j=1,2\dots$\,.
By Lemma~\ref{lem-sp<=n}, points have zero capacity and thus
by \eqref{eq-cap-E} there are $0<r_j< \tfrac18 \cdot 2^{-j}$ such that
$\csp(B(x_j,2r_j),B(0,2^{-j}))<2^{-(j+1)^2}$, $j=1,2\dots$\,.
Let 
\[
    \Om =B(0,1) \setm \biggl( \{0\} \cup \bigcup_{j=1}^\infty K_j   \biggr),
\quad \text{where }
K_j=\itoverline{B(x_j,r_j)},\ j=1,2,\dots.
\]
Then boundary points $x \in \bdy \Om \setm \{0\}$
are regular by the Wiener criterion (Theorem~\ref{thm-main-reg}\ref{a-Wiener}).

Next, we will use the Wiener criterion to show that 
$0 \in \bdy\Omega$ is irregular.
Let $B_\rho=B(0,\rho)$.
If $2^{-k-1} \le \rho<2^{-k}$, then
by Proposition~\ref{prop-Cpt-subadd},
\begin{align*}
   \csp(\clB_\rho\setm \Om, B_{2\rho})
  &   \le \sum_{j=k}^\infty \csp(K_j, B_{2\rho})
   < \sum_{j=k}^\infty \csp(K_j, B_{2^{-k}}) \\
  & < \sum_{j=k}^\infty 2^{-(j+1)^2}
   < 2^{-k^2}.
\end{align*}
Hence
\begin{align*}
 \sum_{k=0}^\infty 
  \int_{2^{-k-1}}^{2^{-k}} \biggl(
  \frac{\csp(\clB_\rho\setm \Om,B_{2\rho})}{\rho^{n-sp}}
          \biggr)^{1/(p-1)} \, \frac{d\rho}{\rho} 
\le 
 \sum_{k=0}^\infty \biggl(  \frac{2^{-k^2}}{2^{-k(n-sp)}}          \biggr)^{1/(p-1)} 
< \infty,
\end{align*}
and thus $0$ is irregular, 
by the 
Wiener criterion (Theorem~\ref{thm-main-reg}\ref{a-Wiener}).
On the other hand, $0$ is not semiregular, by Proposition~\ref{prop-S-largest},
and it thus must be strongly irregular by the trichotomy (Theorem~\ref{thm-trich}).
This also follows from Theorem~\ref{thm-str-irr}\ref{st-R}.
\end{example}

\section{Semiregular  points}
\label{sect-trich}

Recall from the introduction that an irregular boundary point 
$x_0 \in \bdy \Om$ is semiregular if 
\begin{enumerate}
\renewcommand{\theenumi}{\textup{(\Roman{enumi})}}%
\renewcommand{\labelenumi}{\theenumi}%
\item \label{aa-sec}
For every $g \in \VspOm \cap C(\R^n)$ the limit 
$    \lim_{\Om \ni x \to x_0} H g(x)$ exists.
\end{enumerate}

\begin{proof}[Proof of Theorem~\ref{thm-trich}]
The three classes of boundary points are clearly mutually disjoint.
We consider two complementary cases.

\medskip

\emph{Case} 1.
\emph{There is $r>0$ such that $\Csp(B \setm \Om )=0$,
where $B=B(x_0,r)$.}
Let $g \in \Vsp(\Om) \cap C(\R^n)$ and $E = B \setm \Om$.
Then $G:=\Om \cup E=\Om \cup B$ is open and $E$ is relatively closed in $G$. 
By the removability Theorem~\ref{thm-remove}, 
$Hg:=H_{\Om}g$
is a solution of $\LL u=0$ in $G$.
Hence, there is an $\LL$-harmonic (and thus continuous)
function $u$ in $G$ such that $u=Hg$ a.e.
Since $Hg$ is continuous in $\Om$, we 
have $u \equiv Hg$ in $\Om$ and thus
\[
    \lim_{\Om \ni x \to x_0} Hg(x)
  =   \lim_{\Om \ni x \to x_0} u(x)
  = u(x_0),
\]
i.e.\ 
\ref{aa-sec}
holds and 
$x_0$ is either regular or semiregular.
However, it follows from Lemma~\ref{lem-cp-Cp} that
$\csp(\itoverline{B(x_0,\rho)} \setm \Om,B(x_0,2\rho))=0$ whenever $\rho<r$.
Thus the Wiener integral~\eqref{eq-Wiener-w-csp} is finite and 
by the Wiener criterion (see Theorem~\ref{thm-main-reg}), $x_0$ is irregular,
so it must be semiregular.

\medskip

\emph{Case} 2.
\emph{The capacity $\Cpt(B(x_0,r) \setm  \Om)>0$ for all $r>0$.}
For every $j=1,2,\dots$\,, we thus have
$\Cpt(B_j \setm \Om)>0$, where $B_j=B(x_0,1/j)$.
We shall find  regular (not necessarily distinct) 
boundary points $x_j \in  B_j \cap \bdy \Om$, $j=1,2,\dots$\,.
If $\Cpt(B_j \cap \bdy \Om)>0$, then it follows from
the Kellogg property (Theorem~\ref{thm-kellogg})
that there is a regular boundary point $x_j \in B_j \cap \bdy \Om$.

On the other hand,
if $\Cpt(B_j \cap \bdy \Om)=0$, then $G_j:=B_j \setm \clOm \ne \emptyset$.
Let $z \in \bdy G_j \cap B_j \subset \bdy \Om$ and 
$0 < t < \tfrac12  (1/j - |z-x_0|)$.
Then there is $z' \in G_j \cap B(z,t)$.
Let $x_j$ be a closest point in $\bdy \Om$ to $z'$.
Since $|x_j -z'| \le |z - z'| < t$, 
we see that  $x_j \in B_j$.
Moreover, $B(z',|x_j -z'|) \cap\Om=\emptyset$ and hence
there is   
an exterior ball 
at $x_j$ (with respect to $\Om$).  Thus,
by the Wiener criterion, $x_j$ is regular.

Let $g \in \VspOm \cap C(\R^n)$.
Since $x_j$ is regular, we can find $y_j \in B(x_j,1/j) \cap \Om$
so that $|H g(y_j)-g(x_j)|<1/j$.
It follows directly that $y_j \to x_0$ and $H g(y_j) \to g(x_0)$, 
as $j \to \infty$,
i.e.\ \eqref{eq-strong-irr} holds, and thus $x_0$ is either
regular or strongly irregular.
\end{proof}  

\begin{proof}[Proof of Proposition~\ref{prop-S-largest}]
The first identity in~\eqref{eq-S} follows directly
from the proof of Theorem~\ref{thm-trich},
because the semiregular points are exactly those appearing in 
Case~1.
The second identity in \eqref{eq-S} now follows 
since nonempty open sets have positive capacity, by Lemma~\ref{lem-zero-cap}.
This also shows that every strictly larger relatively open subset of 
$\bdy \Om \setm \bdy \clOm$ has positive capacity. 
Since also 
\[
    S=\bigcup  \{ B(y,r) \setm \Om :
y\in \Q^{n}, 0<r\in\Q \text{ and } \Csp(B(y,r) \setm \Om)=0 \},
\]
we see that $S$ is a countable union of sets of capacity zero,
and thus itself of capacity zero, by Proposition~\ref{prop-Cpt-subadd}.
It also follows from~\eqref{eq-S} that $\Om \cup S$ is open.

Let now $E \subset \Omc$ be a set with $\Csp(E)=0$ such that $\Om \cup E$
is open. 
Since nonempty open sets have positive capacity, by Lemma~\ref{lem-zero-cap},
we directly see that $E \subset \bdy \Om$.
As $\Om \cup E$ is open, \eqref{eq-S} implies that $E \subset S$   
and so $S$ is the largest such set.
\end{proof}

The following result characterizes semiregularity
in various ways.
In particular it shows that 
semiregularity can be tested using only the function $d_{x_0}$,
cf.\ Theorem~\ref{thm-main-reg}\ref{a-d}.
Note that $Hd_{x_0}=Pd_{x_0}$, by Theorem~\ref{thm-Perron},
and thus $Hd_{x_0}$ can be replaced by $Pd_{x_0}$ in condition~\ref{d-liminf} below.

\begin{thm}\label{thm-semi-d}
  Let $x_0 \in \bdy \Om$ and $d_{x_0}=\min\{1,|x-x_0|\}$.
Then the following are equivalent\/\textup{:}
\begin{enumerate}
\item \label{d-semi}
  $x_0$ is semiregular,
\item \label{d-notreg}
  $sp \le n$  and moreover
\begin{equation} \label{eq-semireg-bdd}
    \lim_{\Om \ni x \to x_0} H g(x) 
    \quad \text{exists and is finite for every  } g \in \VspOm,
\end{equation}
\item \label{d-liminf}
   \[
    \liminf_{\Om \ni x \to x_0} H d_{x_0}(x) >0,
  \]
\item \label{d-R}
$x_0 \notin \overline{\{x \in \bdy \Om : x \text{ is regular}\}}$,
\item \label{d-semi-P}
  $x_0$ is semiregular for Perron solutions, i.e.\
$x_0$ is irregular and 
\begin{equation} \label{eq-d-semi-P}
\lim_{\Om \ni x \to x_0} Pg(x) 
\quad \text{exists for every bounded $g \in C(\Omc)$,} 
\end{equation}
\item \label{d-semi-P-gen}
  $sp \le n$  and moreover
\begin{equation}    \label{lim-Pg-ex}
\lim_{\Om \ni x \to x_0} \uP g(x) 
\quad \text{exists for every bounded measurable $g:\Omc \to \R$.}
\end{equation}
\end{enumerate}
\end{thm}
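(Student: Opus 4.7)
The plan is to prove the equivalences by showing (a)$\Rightarrow$(b), (c), (e), (f) via removability, closing the loop with (c)$\Rightarrow$(d)$\Rightarrow$(a), and then separately handling (b)$\Rightarrow$(c), (e)$\Rightarrow$(c) and (f)$\Rightarrow$(a). The central tool is Proposition~\ref{prop-S-largest}, which characterises semiregularity as $\Csp(B(x_0,r)\setm\Om)=0$ for some $r>0$; combined with the removability Theorems~\ref{thm-remove} and~\ref{thm-remove-bdd} and the Wiener-criterion consequences in Theorem~\ref{thm-main-reg}, this drives almost everything. Assuming (a), pick such an $r$ and set $E:=B(x_0,r)\setm\Om$; since $\{x_0\}\subset E$ and $\Csp(E)=0$, Lemma~\ref{lem-sp<=n} gives $sp\le n$. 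With $G:=\Om\cup B(x_0,r)$, $E$ is relatively closed in $G$ and $G\setm E=\Om$, so for any $g\in\VspOm$ the Sobolev solution $Hg\in\Vsp(G\setm E)$ solves $\LL u=0$ in $\Om$, and Theorem~\ref{thm-remove} lifts this to a solution in $G$. Its continuous $\LL$-harmonic representative $v\in C(G)$ agrees with $Hg$ on $\Om$, giving $\lim_{x\to x_0}Hg(x)=v(x_0)$, which is (b). Specialising to $d_{x_0}\in\VspOm\cap C(\Rn)$ gives (c): the limit must be positive, else Theorem~\ref{thm-main-reg}\ref{a-d} would make $x_0$ regular, contradicting semiregularity. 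Applying Theorem~\ref{thm-remove-bdd} to the bounded $\LL$-harmonic function $\uP g$ in place of $Hg$ gives (f); (e) follows from (f) by extending $g\in C(\Omc)$ continuously to $\Rn$ and using Theorem~\ref{thm-Perron} to identify $Pg$ with $\uP g$, noting that $x_0$ is irregular by (a).

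For (c)$\Rightarrow$(d), suppose regular $x_j\in\bdy\Om$ tend to $x_0$. Theorem~\ref{thm-main-reg}\ref{a-contx0} applied to $d_{x_0}$ at each $x_j$ produces $y_j\in\Om$ near $x_j$ with $Hd_{x_0}(y_j)\to d_{x_0}(x_j)\to 0$, contradicting $\liminf Hd_{x_0}>0$. For (d)$\Rightarrow$(a), I rerun Case~2 of the proof of Theorem~\ref{thm-trich} backwards: if $\Csp(B(x_0,r)\setm\Om)>0$ for every $r$, that construction produces regular boundary points tending to $x_0$, violating (d); hence some ball satisfies $\Csp(B(x_0,r)\setm\Om)=0$, and Proposition~\ref{prop-S-largest} together with (d) (which excludes the possibility that $x_0$ itself is regular) forces $x_0$ to be semiregular. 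Finally (e)$\Rightarrow$(c) is immediate: $x_0$ is irregular by (e), and applying (e) to the continuous bounded $d_{x_0}|_{\Omc}$ (resolutive with $Pd_{x_0}=Hd_{x_0}$ by Theorem~\ref{thm-Perron}) gives a finite limit, which irregularity promotes to being positive.

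The hard part will be closing (b)$\Rightarrow$(c) and (f)$\Rightarrow$(a) by ruling out that $x_0$ is regular. For (b), I plan to construct an unbounded $g\in\VspOm$ whose Sobolev solution blows up at a regular $x_0$: since $sp\le n$, Lemma~\ref{lem-sp<=n} gives $\Csp(\{x_0\})=0$, so Proposition~\ref{prop-Wsp-Csp} yields $u_j\in\Wsp(\Rn)$ equal to $1$ on nested neighbourhoods of $x_0$ with $\|u_j\|_{\Wsp(\Rn)}^p<2^{-j}$. Setting $g:=\sum_j u_j\in\VspOm$ ensures $g(x)\to\infty$ as $x\to x_0$. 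The truncations $g_N:=\min\{g,N\}\in\VspOm$ are continuous at $x_0$ with value $N$, so assuming regularity of $x_0$, Theorem~\ref{thm-main-reg}\ref{a-contx0} yields $\lim Hg_N=N$; the comparison principle (Theorem~\ref{thm-comparison}) gives $Hg\ge Hg_N$ in $\Om$, so $\lim Hg=+\infty$, contradicting the finiteness in (b). The analogous step (f)$\Rightarrow$(a) is the most delicate: for regular $x_0$ one must exhibit a bounded discontinuous datum (such as the indicator of $\Omc$ on one side of a hyperplane through $x_0$) whose upper Perron solution has direction-dependent limits as $\Om\ni x\to x_0$, thereby violating (f). Verifying this direction-dependence in the nonlocal $(s,p)$-setting, presumably via barrier arguments exploiting the anisotropic averaging of the kernel, is the most intricate part of the proof.
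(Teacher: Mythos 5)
Your overall route through (a), (b), (c), (d), (e) matches the paper's, and your argument for (b)$\Rightarrow$(c) is a correct and genuinely different variant: where the paper builds a \emph{bounded} function $g=\sum_j(-1)^j u_j$ oscillating between $\pm1$ near regular boundary points $x_j\to x_0$ (which requires Lemma~\ref{lem-ROm}), you instead use $\Csp(\{x_0\})=0$ to build an \emph{unbounded} $g=\sum_j u_j\in\Vsp(\R^n)$ with $g\to\infty$ at $x_0$, and force $Hg\to\infty$ via the truncations $g_N=\min\{g,N\}$, Theorem~\ref{thm-main-reg}\ref{a-contx0} and the comparison principle. That works and sidesteps Lemma~\ref{lem-ROm} in this particular implication, at the cost of needing the right-continuity of $g_N$ at $x_0$ and the monotone comparison $Hg\ge Hg_N$ (both of which hold). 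Your handling of (a)$\Rightarrow$(b), (a)$\Rightarrow$(c), (a)$\Rightarrow$(e), (a)$\Rightarrow$(f), (c)$\Rightarrow$(d), (d)$\Rightarrow$(a), and (e)$\Rightarrow$(c) is essentially what the paper does.

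The genuine gap is (f)$\Rightarrow$(a). You acknowledge it yourself: the half-space indicator idea is left unverified, and I don't think it can be pushed through in general. The hypothetical regular $x_0$ could have almost arbitrary geometry; there is no reason the upper Perron solution of a hyperplane-indicator should develop direction-dependent limits at every regular boundary point of every $\Om$, and the needed barrier estimates are not available off the shelf. Worse, your unbounded-$g$ trick from (b)$\Rightarrow$(c) does not transfer: for unbounded data, $\uP g$ may simply be $+\infty$ near $x_0$ (cf.\ Remark~\ref{rmk-bdd-P-cannot-be-dropped}), which is perfectly consistent with (f), since (f) only constrains \emph{bounded} $g$. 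So you need a \emph{bounded} counterexample. The paper achieves this by taking the other exit: instead of (f)$\Rightarrow$(a), it proves (f)$\Rightarrow$(e) and lets the cycle (e)$\Rightarrow$(c)$\Rightarrow$(d)$\Rightarrow$(a) close the loop. The convergence part of (e) is immediate from (f) via resolutivity (Theorem~\ref{thm-Perron}); the irregularity of $x_0$ is shown by contradiction using Lemma~\ref{lem-ROm} to produce regular $x_j\to x_0$, $x_j\ne x_0$, and the bounded oscillating $g=\sum_j(-1)^ju_j$ with disjointly supported $u_j\in\Lipc(B(x_j,\tfrac14|x_j|))$; Theorem~\ref{thm-main-reg}\ref{a-contx0-P} gives $\uP g\to(-1)^j$ near $x_j$, so $\uP g$ oscillates between $\pm1$ near $x_0$, contradicting (f). You should replace your (f)$\Rightarrow$(a) with this (f)$\Rightarrow$(e) argument; note that you will then still need Lemma~\ref{lem-ROm} after all, so your avoidance of it in (b)$\Rightarrow$(c) is a local simplification rather than a global one.
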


Note that, by Corollary~\ref{cor-sp>n=>reg},
\[
\text{
\ref{d-semi} \imp\ $x_0$ is irregular \imp\ $sp \le n$.
}
\]
From this it follows
that 
the condition $sp \le n$ 
 in 
\ref{d-notreg} and
\ref{d-semi-P-gen}
can equivalently be replaced by requiring that $x_0$ is irregular.
However, the following example shows that
the condition $sp \le n$
cannot be dropped from 
\ref{d-notreg} nor 
\ref{d-semi-P-gen}.

\begin{example} \label{ex-punct-ball}
We are now going to show that the condition $sp \le n$
cannot be dropped from 
\ref{d-notreg} nor
 \ref{d-semi-P-gen}
in Theorem~\ref{thm-semi-d}.
Assume that $sp > n$ and let $\Om=B(0,1) \setm \{0\}$.
Then $x_0:=0$ is regular, by Corollary~\ref{cor-sp>n=>reg}.

\ref{d-notreg}
Let  $g \in \Vsp(\Om)$. 
As integrals 
do not see sets of measure zero, we directly 
get that $g \in \Vsp(B(0,1))$.  
Since $sp>n$, it follows from the fractional Sobolev inequality
(see Di Nezza--Palatucci--Valdinoci~\cite[Theorem~8.2]{DNPV12})
that there is a 
function $\gt =g$ a.e.\ in $\R^n$ such that $\gt \in C(B(0,1))$.
Hence it follows from Theorem~\ref{thm-main-reg}\ref{a-contx0} that
\[
    \lim_{\Om \ni x \to 0} H g(x) 
    =\lim_{\Om \ni x \to 0} H \gt(x) = \gt(0),
\]
i.e.\ \eqref{eq-semireg-bdd} holds even though \ref{d-semi} fails.

\ref{d-semi-P-gen} 
Let $g : \Omc \to \R$ be a bounded measurable function.
Since $x_0$ is isolated in $\Omc$, $g$ is continuous at $x_0$,
and thus, by Theorem~\ref{thm-main-reg}\ref{a-contx0-P},
\[
    \lim_{\Om \ni x \to x_0} \uP g(x)=g(x_0),
\]  
i.e.\ \eqref{lim-Pg-ex} holds even though \ref{d-semi} fails.
\end{example}

In order to prove Theorem~\ref{thm-semi-d} we will need
the following lemma.

\begin{lem} \label{lem-ROm}
Assume that $sp \le n$.
Then the set
\[
R:=\{x \in \bdy \Om: x \text{ is regular}\}
\]
does not have any isolated points.
\end{lem}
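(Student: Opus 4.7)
The plan is to fix $x_0\in R$ and an arbitrary $\epsilon>0$, and to produce a regular boundary point in $B(x_0,\epsilon)\setm \{x_0\}$. Since $x_0$ is regular, it is not semiregular, so the first characterization of $S$ in Proposition~\ref{prop-S-largest} forces $\Csp(B(x_0,\epsilon)\setm \Om)>0$. I would then split the argument into two sub-cases according to whether $\Csp(B(x_0,\epsilon)\cap \bdy \Om)$ is positive or zero.

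In the first sub-case, $\Csp(B(x_0,\epsilon)\cap \bdy \Om)>0$. I would invoke the Kellogg property (Theorem~\ref{thm-kellogg}) to conclude that the set $I_\Om$ of irregular boundary points has $\Csp(I_\Om)=0$, and Lemma~\ref{lem-sp<=n} (via $sp\le n$) to conclude $\Csp(\{x_0\})=0$. Countable subadditivity of $\Csp$ (Proposition~\ref{prop-Cpt-subadd}) then shows that $B(x_0,\epsilon)\cap \bdy \Om \cap R \setm \{x_0\}$ has positive capacity, hence is nonempty, immediately providing the desired regular point.

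In the second sub-case, $\Csp(B(x_0,\epsilon)\cap \bdy \Om)=0$. Since $B(x_0,\epsilon)\setm \Om = (B(x_0,\epsilon)\cap \bdy \Om) \cup G$ with $G:=B(x_0,\epsilon)\setm \clOm$, subadditivity forces $\Csp(G)>0$; by Lemma~\ref{lem-zero-cap} then $|G|>0$, so $G$ is a nonempty open set, and since $x_0\in\bdy\Om$ makes $\Om\cap B(x_0,\epsilon)$ nonempty and open, we also have $G\subsetneq B(x_0,\epsilon)$. I would next use a connectedness argument to locate a point $z\in \bdy G\cap B(x_0,\epsilon)\setm \{x_0\}$: if this set were empty, then $G$ would be clopen in $B(x_0,\epsilon)\setm \{x_0\}$, so the connectedness of this punctured ball (which holds for $n\ge 2$) would force $G=\emptyset$ or $G\supseteq B(x_0,\epsilon)\setm\{x_0\}$, both excluded. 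With such a $z$ in hand, I would pick $z'\in G$ with $|z-z'|$ small enough that the nearest $\bdy\Om$-projection $x'$ of $z'$ lies in $B(x_0,\epsilon)\setm\{x_0\}$; because $B(z',|z'-x'|)$ is then an exterior ball at $x'$ with respect to $\Om$, the Wiener criterion (Theorem~\ref{thm-main-reg}) yields regularity of $x'$.

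The main obstacle is the connectedness step in the second sub-case; once one has a boundary point of $G$ distinct from $x_0$, the exterior-ball construction parallels Case~2 in the proof of Theorem~\ref{thm-trich}, and the rest of the argument is routine subadditivity bookkeeping for capacities together with distance estimates to guarantee $x'\in B(x_0,\epsilon)\setm\{x_0\}$.
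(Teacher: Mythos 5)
Your proof is correct for $n \ge 2$, but it takes a genuinely different route from the paper's. The paper dichotomizes on whether $x_0 \in \bdy\clOm$: for $x_0 \in R\setm\bdy\clOm$ the argument is essentially your Case~1, while for $x_0\in\bdy\clOm$ the paper cites Remark~10.5 of \cite{BBK1} (density of $R$ in $\bdy\clOm$) together with the fact that $\bdy\clOm$ has no isolated points. You instead dichotomize on whether $\Csp(B(x_0,\epsilon)\cap\bdy\Om)$ is positive or zero, and in the zero-capacity case replace the density argument by a connectedness-plus-exterior-ball construction that closely parallels Case~2 in the proof of Theorem~\ref{thm-trich}. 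The upshot is that your argument is self-contained within the present paper: it does not import the density statement from~\cite{BBK1}, at the modest cost of the connectedness step.

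Two smaller points. First, the appeal to Lemma~\ref{lem-zero-cap} runs backwards: that lemma gives $\Csp(E)\ge |E|$, so it does not allow you to deduce $|G|>0$ from $\Csp(G)>0$. All you need there is that $\Csp(G)>0$ forces $G\ne\emptyset$, which is immediate (and $|G|>0$ then follows because $G$ is open, but you never actually use it). Second, the restriction $n\ge2$ that you flag for the connectedness step is not a defect peculiar to your argument: for $n=1$ the paper's route has the same problem, since $\bdy\clOm$ may have isolated points, and in fact for $\Om=(0,1)$ with $sp<1=n$ both endpoints are regular while no other boundary points exist, so $R=\{0,1\}$ has isolated points and the lemma as stated fails. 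This is a gap in the statement itself rather than in your proof.
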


The punctured ball in Example~\ref{ex-punct-ball}
shows that this is false for $sp >n$.

\begin{proof}
By Remark~10.5 in~\cite{BBK1}, 
$R$ is dense in $\bdy \clOm$, which
does not have any isolated points.
So consider $x_0 \in  R \setm \bdy \clOm$
and let $r>0$ be so small that
$B=B(x_0,r) \subset \clOm$.
Since $x_0$ is not semiregular and $\Csp(\{x_0\})=0$, by 
Lemma~\ref{lem-sp<=n}, we get that 
\[
   \Csp((B \setm \{x_0\}) \cap \bdy \Om)
   =\Csp(B  \cap \bdy \Om)
 >0,
\]
by 
Propositions~\ref{prop-S-largest} and~\ref{prop-Cpt-subadd}.
Hence, by the Kellogg property (Theorem~\ref{thm-kellogg})
there is a regular point in $(B \setm \{x_0\}) \cap \bdy \Om$.
Letting $r \to 0$ shows that $x_0$ is not isolated in~$R$.
\end{proof}

\begin{proof}[Proof of Theorem~\ref{thm-semi-d}]
\ref{d-semi}\imp\ref{d-notreg}
By Proposition~\ref{prop-S-largest}, there is a ball $B \ni x_0$ 
such that $\Csp(B \setm \Om)=0$.
In particular, $\Csp(\{x_0\})=0$, 
and hence $sp \le n$, by Lemma~\ref{lem-sp<=n}.

Since $Hg \in \Vsp(\Om)$ is $\LL$-harmonic in $\Om$
it follows from the removability
Theorem~\ref{thm-remove}
that $Hg$ is a solution 
of $\LL u=0$ in $\Om \cup B$.
Hence, there is an $\LL$-harmonic (and thus continuous)
function $u$ in $\Om \cup B$ such that $u=Hg$ a.e.
Since $Hg$ is continuous in $\Om$,
we have   
$u \equiv Hg$ in $\Om$ and thus
\[
    \lim_{\Om \ni x \to x_0} Hg(x)
  =   \lim_{\Om \ni x \to x_0} u(x)
  = u(x_0) 
\quad \text{exists and is finite}.
\]

\ref{d-notreg}\imp\ref{d-liminf}
First, we show that $x_0$ is irregular.
Assume for a contradiction that $x_0$ is regular.
Without loss of generality we can assume that $x_0=0$.
Since $sp \le n$, it follows from 
Lemma~\ref{lem-ROm} that there is a sequence 
$\{x_j\}_{j=1}^\infty$ of regular boundary points such that
$|x_{j+1}| \le \tfrac12 |x_j|$, $j=1,2,\dots$\,.
By Lemma~\ref{lem-sp<=n},
\[
   \csp(\{x_j\},B(x_j,\tfrac14 |x_j|))=0.
\] 
Hence it follows from the definition of the condenser capacity,
truncation and the 
fractional Poincar\'e inequality \eqref{eq-frac-PI}
that  there are $u_j \in \Lipc(B(x_j,\tfrac14 |x_j|))$ 
with $u_j(x_j)=1$, $0 \le u_j \le 1$ in $\R^n$ 
and $\|u_j\|_{\Wsp(\Rn)}< 2^{-j}$, $j=1,2,\dots$\,.

The functions $u_j$ have pairwise disjoint supports.
Hence 
\begin{equation}    \label{eq-def-g}
     g:= \sum_{j=1}^\infty (-1)^j u_j \in \Wsp(\Rn) = \Vsp(\Rn)
\end{equation}
is continuous at each $x_j$, $j=1,2,\dots$\,, and $|g| \le 1$.
Thus by 
Theorem~\ref{thm-main-reg}\ref{a-contx0},
\[
    \lim_{\Om \ni x \to x_j} H g(x) = g(x_j) = (-1)^j,
\quad j=1,2,\dots.
\]
Since $|g| \le 1$ it follows that
\[
 \liminf_{\Om \ni x \to x_0} H g(x) = -1 
\quad \text{and} \quad 
 \limsup_{\Om \ni x \to x_0} H g(x) =1,
\]
which contradicts \ref{d-notreg}.
Hence $x_0$ must be irregular.

By \ref{d-notreg}, the limit $a:=\lim_{\Om \ni x \to x_0} H d_{x_0}(x)$ 
exists, and it is clearly nonnegative.
If $a$ were zero, then $x_0$ would be regular by 
Theorem~\ref{thm-main-reg}\ref{a-d}, but this contradicts the 
already proved irregularity.
Hence $a>0$ and \ref{d-liminf} must hold.

$\neg$\ref{d-R} $\imp$ $\neg$\ref{d-liminf}
For each $j \ge 1$, the set
$B(x_0,1/j)\cap \bdy \Om$ contains a regular boundary point $x_j$.
(The points $x_j$ need not be distinct.)
Since $x_j$ is regular, we can find $y_j \in B(x_j,1/j) \cap \Om$ so that
\[
        \frac{1}{j} > |d_{x_0}(x_j)-H d_{x_0}(y_j)|.
\]
Then $y_j \to x_0$ and $H d_{x_0}(y_j) \to 0$, as $j \to \infty$,
which gives $\neg$\ref{d-liminf}.

\ref{d-R}\imp\ref{d-semi}
It follows from the proof of Theorem~\ref{thm-trich}
that we are not in Case~2 therein. 
Hence we must be in Case~1 and thus $x_0$ is semiregular.

\ref{d-semi}\imp\ref{d-semi-P-gen}
The proof of this implication is almost the same 
as the proof of \ref{d-semi}\imp\ref{d-notreg} above,
but appealing to the removability Theorem~\ref{thm-remove-bdd}
instead of Theorem~\ref{thm-remove}.

\ref{d-semi-P-gen}\imp\ref{d-semi-P}
Since every bounded  $g \in C(\Omc)$ is resolutive, by
Theorem~\ref{thm-Perron},
we see that \eqref{eq-d-semi-P} follows directly from 
\ref{d-semi-P-gen}.
It remains to show that $x_0$ is irregular,
which  can be shown as in the proof
of 
\ref{d-notreg}\imp\ref{d-liminf}
above,
but appealing to 
Theorem~\ref{thm-main-reg}\ref{a-contx0-P}
instead of Theorem~\ref{thm-main-reg}\ref{a-contx0}.
(In fact, this is easier as in this case there is no need to
show that the function $g$ in \eqref{eq-def-g}
belongs to $\Vsp(\Rn)$.)

\ref{d-semi-P}\imp\ref{d-liminf}
Since $Hd_{x_0}=Pd_{x_0}$, by Theorem~\ref{thm-Perron},
we see that the limit
\[
    a:=\lim_{\Om \ni x \to x_0} H d_{x_0}(x) 
    = \lim_{\Om \ni x \to x_0} P d_{x_0}(x) 
   \quad \text{exists}.
\]
Since $x_0$ is irregular, 
it follows from Theorem~\ref{thm-main-reg}\ref{a-d} that $a>0$.
\end{proof}

For the local equation $\Delta_p u=0$,
another characterization of semiregularity is that there is $r>0$ such that
the capacity of $B(x_0,r) \cap \bdy \Om$ is zero,
see  Bj\"orn~\cite[(2.1)]{ABclass}.  
Next we show that such a characterization holds if $sp >1$.
On the other hand, 
Example~\ref{ex-sp<=1} below shows that  this 
does not characterize semiregularity when $sp \le 1$.

\begin{prop} \label{prop-semi-sp>1}
Assume that $sp>1$.
Let $B$ be a ball such that $B \cap \Om \ne \emptyset$.
Then 
\begin{equation} \label{eq-sp>1}
\Csp(B \cap \bdy \Om)=0 \quad \text{if and only if} \quad
\Csp(B \setm \Om)=0.
\end{equation}

In particular,
$x_0 \in \bdy \Om$ is semiregular
if and only if
\begin{equation} \label{eq-sp>1-semi}
\Csp(B(x_0,r) \cap \bdy \Om)=0 
\quad \text{for some $r>0$}.
\end{equation}
\end{prop}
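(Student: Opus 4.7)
First I would dispense with the easy direction: since $\Om$ is open, $\bdy\Om\cap\Om = \emptyset$ and thus $B\cap\bdy\Om\subset B\setm\Om$. Monotonicity of $\Csp$ (implicit in \eqref{eq-cap-E}) then gives the implication $\Csp(B\setm\Om)=0 \imp \Csp(B\cap\bdy\Om)=0$ without using $sp>1$.

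For the converse I would exploit the disjoint decomposition
\[
B \setm \Om = (B \cap \bdy \Om) \cup (B \setm \itoverline{\Om}).
\]
The second piece is open, so by Lemma~\ref{lem-zero-cap} it has positive capacity whenever it is nonempty. Combined with the countable subadditivity from Proposition~\ref{prop-Cpt-subadd}, the hypothesis $\Csp(B\cap\bdy\Om)=0$ therefore reduces the task to showing $B\setm\itoverline{\Om}=\emptyset$, i.e.\ $B\subset\itoverline{\Om}$. I would argue this by contradiction: if $B\setm\itoverline{\Om}$ were nonempty, then together with $B\cap\Om$ (nonempty by assumption) it would give two disjoint, nonempty, relatively open subsets of the connected ball $B$ whose union equals $B\setm(B\cap\bdy\Om)$, thereby forcing $B\cap\bdy\Om$ to topologically separate $B$.

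The main obstacle is ruling out this separation when $sp>1$. For that I would invoke the classical Hausdorff-dimension estimate for $(s,p)$-capacity: every set $E$ with $\Csp(E)=0$ satisfies $\mathcal{H}^{n-sp+\eps}(E)=0$ for every $\eps>0$; see e.g.\ Adams--Hedberg~\cite[Theorem~5.1.13]{AH}. Since $sp>1$ gives $n-sp<n-1$, this yields $\mathcal{H}^{n-1}(B\cap\bdy\Om)=0$, and a standard Fubini/projection argument then shows that removing a set of vanishing $(n-1)$-dimensional Hausdorff measure from an open ball in $\Rn$ preserves connectedness. This contradicts the separation above and completes the proof of \eqref{eq-sp>1}. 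The semiregularity statement~\eqref{eq-sp>1-semi} then follows immediately by combining Proposition~\ref{prop-S-largest}, which says that $x_0$ is semiregular if and only if $\Csp(B(x_0,r)\setm\Om)=0$ for some $r>0$, with \eqref{eq-sp>1} applied to $B=B(x_0,r)$ (noting that $B(x_0,r)\cap\Om\ne\emptyset$ because $x_0\in\bdy\Om$).
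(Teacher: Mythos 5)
Your proof is correct and takes essentially the same route as the paper's. Both hinge on Adams--Hedberg~\cite[Theorem~5.1.13]{AH} together with the topological fact that a set separating the ball $B$ into two nonempty open pieces must carry positive $(n-1)$-dimensional Hausdorff measure; you simply phrase this fact as its contrapositive (a zero-$\mathcal{H}^{n-1}$-measure set cannot disconnect $B$) and run a proof by contradiction, while the paper argues forwards. The paper applies the Adams--Hedberg theorem directly with the gauge $h(t)=t^{n-1}$, which is slightly cleaner, whereas you use $h(t)=t^{n-sp+\eps}$ and then pass to $\mathcal{H}^{n-1}$ by choosing $\eps<sp-1$; both are valid. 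One detail you leave implicit, and which the paper explicitly flags via Remark~\ref{rmk-Bsp-Csp}: Theorem~5.1.13 in \cite{AH} is stated for the Bessel potential capacity, so one also needs the comparability of that capacity with the Sobolev capacity $\Csp$ used here.
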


\begin{proof}
If $B \subset \clOm$, then the equivalence in~\eqref{eq-sp>1} is trivial.
Otherwise, 
$B \cap\bdy \Om$ splits $B$ into the two nonempty open sets
$B \cap \Om$ and $B \setm \clOm$.
It follows that the $(n-1)$-dimensional Hausdorff measure $H^{n-1}(B \cap \bdy \Om)>0$.
Since $sp>1$,
Theorem~5.1.13 in Adams--Hedberg~\cite{AH} (with $h(t)=t^{n-1}$), 
together with Remark~\ref{rmk-Bsp-Csp} below,
then implies that 
\[
\Csp(B \setm \Om) \ge \Csp(B \cap  \bdy \Om)>0,
\]
which shows the equivalence in~\eqref{eq-sp>1} also in this case.

The last part now follows directly from Proposition~\ref{prop-S-largest}.
\end{proof}

\begin{example} \label{ex-sp<=1}
Assume that  $sp \le 1$.
  Let $x_0=0$, $B=B(0,1) \subset\R^n$
and $\Om=\{(x_1,\dots,x_n) \in B : x_n >0\}$.
Then 
it follows from Theorem~5.1.9 in Adams--Hedberg~\cite{AH}
(together with Remark~\ref{rmk-Bsp-Csp})
that 
$\Csp(B \cap \bdy \Om)=\Csp(B \cap \R^{n-1})=0$.
On the other hand, 
there is an exterior cone at $0$,
so it is a regular boundary point by the
Wiener criterion (Theorem~\ref{thm-main-reg}\ref{a-Wiener}).
Thus \eqref{eq-sp>1-semi} does not
characterize semiregularity when $sp \le 1$.
\end{example}

\section{Strongly irregular points}
\label{sect-strong-irr}

Recall from the introduction that an irregular boundary point 
$x_0 \in \bdy \Om$ is strongly irregular if 
\begin{Enumerate}
\renewcommand{\theenumi}{\textup{(\Roman{enumi})}}%
\renewcommand{\labelenumi}{\theenumi}%
\stepcounter{enumi}
\item \label{ab-sec}
For each $g \in \VspOm \cap C(\R^n)$, there is
a sequence $\{y_j\}_{j=1}^\infty$  such that
\begin{equation*}
\Om \ni y_j \to x_0 \text{ and } H g(y_j) \to g(x_0),
\quad \text{as } j \to \infty.
\end{equation*}
\end{Enumerate}

Also strongly irregular points can be characterized in various ways,
in particular, using only the function $d_{x_0}$.
Note that $Hd_{x_0}=Pd_{x_0}$, by Theorem~\ref{thm-Perron},
and thus $Hd_{x_0}$ can be replaced by $Pd_{x_0}$ in condition~\ref{st-liminf} below.

\begin{thm}\label{thm-str-irr}
  Let $x_0 \in \bdy \Om$ and $d_{x_0}=\min\{1,|x-x_0|\}$.
Then the following are equivalent\/\textup{:}
\begin{enumerate}
\item \label{st-str}
  $x_0$ is strongly irregular,
\item \label{st-bdd}
  there is 
a function $g \in \VspOm$
that is continuous at $x_0$ and
such that the limit
\begin{equation*} 
    \lim_{\Om \ni x \to x_0} H g(x) 
    \quad \text{does not exist},
\end{equation*}
\item \label{st-bdd-P}
  there is a bounded function $g:\Omc\to\R$ that is continuous at $x_0$ and
such that the limit
\begin{equation*} 
    \lim_{\Om \ni x \to x_0} \uP g(x) 
    \quad \text{does not exist},
\end{equation*}

\item \label{st-liminf}
   \[
    \limsup_{\Om \ni x \to x_0} H d_{x_0}(x) 
 >   \liminf_{\Om \ni x \to x_0} H d_{x_0}(x),
  \]
\item \label{st-R}
$x_0 \in \itoverline{R} \setm R$,
where $R=\{x \in \bdy \Om : x \text{ is regular}\}$.
\end{enumerate}
\end{thm}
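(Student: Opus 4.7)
The plan is to reduce every one of the four equivalences in the theorem to the trichotomy (Theorem~\ref{thm-trich}) together with the already established characterizations of regular points (Theorem~\ref{thm-main-reg}) and of semiregular points (Theorem~\ref{thm-semi-d}). Since ``strongly irregular'' unpacks, via the trichotomy, as ``irregular and not semiregular,'' each of the conditions \ref{st-bdd}--\ref{st-R} will turn out to be either the negation of the matching characterization of semiregularity or the failure of the matching characterization of regularity.

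First, the equivalence of \ref{st-str} and \ref{st-R} is immediate: $x_0$ is strongly irregular exactly when $x_0\notin R$ and $x_0$ is not semiregular, while by Theorem~\ref{thm-semi-d}\ref{d-R} the latter is equivalent to $x_0\in\itoverline{R}$. For the implication from \ref{st-str} to \ref{st-liminf} I would use that $d_{x_0}\ge 0$ forces $Hd_{x_0}\ge 0$ by the comparison principle (Theorem~\ref{thm-comparison}); since $x_0$ is not semiregular, Theorem~\ref{thm-semi-d}\ref{d-liminf} gives $\liminf_{\Om\ni x\to x_0} Hd_{x_0}(x)=0$, and since $x_0$ is not regular, Theorem~\ref{thm-main-reg}\ref{a-d} rules out the whole limit being $0$, so the $\limsup$ must be strictly positive. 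Conversely, \ref{st-liminf} implies that the limit fails to exist (hence $x_0$ is not regular by Theorem~\ref{thm-main-reg}\ref{a-d}); if $\liminf>0$, then Theorem~\ref{thm-semi-d}\ref{d-liminf} would make $x_0$ semiregular, which in turn would make $\lim Hd_{x_0}$ exist by Theorem~\ref{thm-semi-d}\ref{d-notreg}, a contradiction. Hence $\liminf=0<\limsup$, and the trichotomy delivers strong irregularity.

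For the equivalences of \ref{st-str} with \ref{st-bdd} and with \ref{st-bdd-P}, the forward directions in both cases use $g=d_{x_0}$ as the witness: $d_{x_0}$ is Lipschitz, bounded, and continuous on $\R^n$, and it lies in $\Vsp(\Om)$ (split the Gagliardo double integral into a near-diagonal piece, controlled by the Lipschitz bound since $p(1-s)>0$, and a far-field piece controlled by $|\Om|$). Moreover $Hd_{x_0}=Pd_{x_0}=\uP d_{x_0}$ by Theorem~\ref{thm-Perron}, so~\ref{st-liminf} supplies the required non-existence of the limit in both formulations at once. For the reverse directions I would contrapose: if $x_0$ were regular, Theorem~\ref{thm-main-reg}\ref{a-contx0} (respectively~\ref{a-contx0-P}) would make the limit exist and equal $g(x_0)$; if $x_0$ were semiregular, Theorem~\ref{thm-semi-d}\ref{d-notreg} (respectively~\ref{d-semi-P-gen}) would again make the limit exist. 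The trichotomy then leaves only strong irregularity.

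No genuine obstacle arises; the argument is essentially careful bookkeeping against Theorems~\ref{thm-main-reg}, \ref{thm-trich} and~\ref{thm-semi-d}. The one step worth double-checking is the membership $d_{x_0}\in\Vsp(\Om)$ (standard, and already implicit in the use of $Hd_{x_0}$ in Theorem~\ref{thm-main-reg}\ref{a-d}), and being honest in the Perron version by invoking the identity $Hd_{x_0}=Pd_{x_0}$ from Theorem~\ref{thm-Perron} so that $d_{x_0}$ may be used interchangeably for $H$ and $\uP$.
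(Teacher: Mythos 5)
Your proposal is correct and follows essentially the same route as the paper: both reduce everything to the trichotomy (Theorem~\ref{thm-trich}) plus the characterizations in Theorems~\ref{thm-main-reg} and~\ref{thm-semi-d}, using $g=d_{x_0}$ as the universal witness and $Hd_{x_0}=Pd_{x_0}$ from Theorem~\ref{thm-Perron} to bridge the Sobolev and Perron versions. The only cosmetic difference is that the paper proves $\neg\ref{st-liminf}\Rightarrow\neg\ref{st-str}$ by contraposition while you argue the contrapositive directly via $\liminf=0<\limsup$; the logic is identical.
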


\begin{proof}[Proof of Theorem~\ref{thm-str-irr}]
$\neg$\ref{st-liminf}\imp$\neg$\ref{st-str}
By assumption, the limit 
\[
c:= \lim_{\Om \ni x \to x_0} H d_{x_0}(x) \ge 0
\] 
exists.
If $c>0$, then $x_0$ is semiregular, by Theorem~\ref{thm-semi-d}\ref{d-liminf},
while $c=0$ implies that $x_0$ is regular, by Theorem~\ref{thm-main-reg}\ref{a-d}.

\ref{st-liminf}\imp\ref{st-bdd} 
This is trivial.

\ref{st-bdd}\imp\ref{st-str}
It follows from Theorem~\ref{thm-main-reg}\ref{a-contx0} 
that $x_0$ is irregular,
and from 
Theorem~\ref{thm-semi-d}\ref{d-notreg} that $x_0$ is not
semiregular.
Hence $x_0$ is strongly irregular, by Theorem~\ref{thm-trich}.

\ref{st-liminf}\imp\ref{st-bdd-P} 
Since 
$Pd_{x_0}=Hd_{x_0}$, by Theorem~\ref{thm-Perron},
this is immediate.

\ref{st-bdd-P}\imp\ref{st-str}
It follows from Theorem~\ref{thm-main-reg}\ref{a-contx0-P} 
that $x_0$ is irregular, and from 
Theorem~\ref{thm-semi-d}\ref{d-semi-P-gen} that $x_0$ is not
semiregular.
Hence $x_0$ is strongly irregular, by Theorem~\ref{thm-trich}.

\ref{st-str}\imp\ref{st-R}
By Theorem~\ref{thm-semi-d}\ref{d-R} and the irregularity of $x_0$, we
see that $x_0 \in \itoverline{R} \setm R$.

\ref{st-R}\imp\ref{st-str}
By Theorem~\ref{thm-semi-d}\ref{d-R}, $x_0$ is not semiregular,
and by assumption \ref{st-R} it is irregular. 
Hence $x_0$ must be strongly irregular, by 
Theorem~\ref{thm-trich}.
\end{proof}

A priori,  the sequence $\{y_j\}_{j=1}^\infty$ 
in the definition~\ref{ab-sec} of strongly irregular points
is allowed to depend on $g$.
However, 
we 
next show that there is a universal sequence $\{y_j\}_{j=1}^\infty$ 
suitable for all bounded functions.
The following result gives the precise statement.

\begin{thm} \label{thm-univ-seq}
Assume that $x_0 \in \bdy \Om$ is strongly irregular.
Then there is a 
sequence
$\{y_j\}_{j=1}^\infty$ such that
$\Om \ni y_j \to x_0$, as $j \to \infty$,  and
\begin{align}
 \lim_{j \to \infty} H g(y_j) &=g(x_0)
 \quad \text{for every bounded 
$g \in \VspOm$ that is continuous at $x_0$,} 
\label{eq-univ-H}
\\
 \lim_{j \to \infty} \uP g(y_j) &=g(x_0)
 \quad \text{for every bounded $g:\Om^c\to\R$
that is 
continuous at $x_0$.}
\label{eq-univ-P}
\end{align}
\end{thm}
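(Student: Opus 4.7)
The plan is to build the sequence $\{y_j\}$ from the fact that $x_0\in\itoverline{R}\setm R$, and then use a barrier comparison against $d_{x_0}$ to handle all admissible data uniformly.

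First, Theorem~\ref{thm-str-irr}\ref{st-R} supplies regular boundary points $x_j\in R$ with $|x_j-x_0|<1/j$. Since $x_j$ is regular and $d_{x_0}\in\VspOm\cap C(\Rn)$ is bounded Lipschitz, the definition of regularity (equivalently Theorem~\ref{thm-main-reg}\ref{a-contx0}) yields $\lim_{\Om\ni x\to x_j}Hd_{x_0}(x)=d_{x_0}(x_j)$. I would then pick $y_j\in\Om\cap B(x_j,1/j)$ with $|Hd_{x_0}(y_j)-d_{x_0}(x_j)|<1/j$, so that $y_j\to x_0$ and $Hd_{x_0}(y_j)\to 0$. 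Theorem~\ref{thm-Perron} additionally identifies $Pd_{x_0}=Hd_{x_0}$, so the same sequence forces $Pd_{x_0}(y_j)\to 0$.

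To prove \eqref{eq-univ-H}, fix a bounded $g\in\VspOm$ continuous at $x_0$, set $M=\sup_{\Rn}|g|$ and take $\eps>0$. Choose $\delta\in(0,1]$ with $|g-g(x_0)|<\eps$ on $B(x_0,\delta)$, set $C=2M/\delta$, and define
\[
\phi=g(x_0)+\eps+Cd_{x_0} \quad\text{and}\quad \psi=g(x_0)-\eps-Cd_{x_0}.
\]
A case split (inside vs.\ outside $B(x_0,\delta)$, where $d_{x_0}\ge\delta$) gives $\psi\le g\le\phi$ pointwise on $\Rn$, so $(g-\phi)_\limplus\equiv 0\equiv(\psi-g)_\limplus$ lie trivially in $\Vspo(\Om)$, and Theorem~\ref{thm-comparison} yields $H\psi\le Hg\le H\phi$. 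The key algebraic identity $H(c+Ch)=c+CHh$ for $c\in\R$, $C\ge 0$ and $h\in\VspOm$ comes from the $(p-1)$-homogeneity of $\E$ in its first argument (so $\LL(Ch)=C^{p-1}\LL h$), the fact that constants are killed by $\LL$, and the uniqueness in Theorem~\ref{thm-DP}. Applied to $\phi,\psi$ and combined with $Hd_{x_0}(y_j)\to 0$, this gives $g(x_0)-\eps\le\liminf_j Hg(y_j)\le\limsup_j Hg(y_j)\le g(x_0)+\eps$; letting $\eps\to 0$ proves \eqref{eq-univ-H}.

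For \eqref{eq-univ-P} I would rerun the same script with $H$ replaced by $\uP$ (and by $\lP$ for the lower bound). For bounded $g:\Omc\to\R$ continuous at $x_0$, the same $\phi,\psi$ restricted to $\Omc$ satisfy $\psi\le g\le\phi$; the monotonicity $\lP\psi\le\lP g\le\uP g\le\uP\phi$ is immediate from Definition~\ref{def-Perron} since any member of $\UU_\phi$ belongs to $\UU_g$, as is the scaling $\uP(c+Ch)=c+C\uP h$ for $C\ge 0$. Since $d_{x_0}$ is bounded and continuous on $\Rn$ and lies in $\VspOm$, Theorem~\ref{thm-Perron} identifies $\uP d_{x_0}=\lP d_{x_0}=Pd_{x_0}=Hd_{x_0}$, so $\uP\phi(y_j)\to g(x_0)+\eps$ and $\lP\psi(y_j)\to g(x_0)-\eps$. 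A squeeze and $\eps\to 0$ then deliver \eqref{eq-univ-P}. The main obstacle will be the clean bookkeeping of the affine/positive-homogeneity identities for both $H$ and $\uP$ (and verifying $\phi,\psi\in\VspOm$, which is routine from $d_{x_0}$ bounded Lipschitz, $\Om$ bounded and $s<1$); once these are in place, the single barrier statement $Hd_{x_0}(y_j)\to 0$ from the first step does all the real work.
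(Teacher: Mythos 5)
Your proof is correct and follows essentially the same barrier-against-$d_{x_0}$ strategy as the paper. Two small points of comparison. First, the paper obtains the sequence $\{y_j\}$ more directly: the definition of strong irregularity, applied to the single test function $d_{x_0}$ (for which $d_{x_0}(x_0)=0$), immediately produces $\Om\ni y_j\to x_0$ with $Hd_{x_0}(y_j)\to 0$; your detour via regular points $x_j\in R$ (Theorem~\ref{thm-str-irr}\ref{st-R}) lands in the same place but reproves part of the trichotomy. Second, the paper normalizes $g(x_0)=0$ and uses the one-sided majorant $g\le m\,d_{x_0}+\eps$ together with the identity $H(m\,d_{x_0}+\eps)=m\,Hd_{x_0}+\eps$, then applies the estimate to $-g$; your explicit sandwich $\psi\le g\le\phi$ with $\phi,\psi$ affine in $d_{x_0}$ is an equivalent packaging.

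One sentence in the Perron part should be tightened. The chain $\lP\psi\le\lP g\le\uP g\le\uP\phi$ invokes $\lP g\le\uP g$, which is \emph{not} a formal consequence of Definition~\ref{def-Perron} alone (it rests on a comparison principle between $\LL$-super- and $\LL$-subharmonic functions); moreover your stated justification, that $\UU_\phi\subset\UU_g$, only gives the last inequality. Fortunately you do not need $\lP g\le\uP g$: since $\psi\le g$ on $\Omc$ gives $\UU_g\subset\UU_\psi$, you have $\uP\psi\le\uP g\le\uP\phi$, and $\uP\psi=g(x_0)-\eps-C\,\lP d_{x_0}=g(x_0)-\eps-C\,Hd_{x_0}$ by Theorem~\ref{thm-Perron}, so the squeeze goes through entirely at the level of upper Perron solutions. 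This is also, in effect, what the paper does. With that small repair your argument is sound.
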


Observe that it follows from the proof that any sequence
$\{y_j\}_{j=1}^\infty$ suitable for $d_{x_0}$ is a universal sequence.

\begin{proof}
Since $x_0$ is strongly irregular there is sequence 
$\{y_j\}_{j=1}^\infty$ such that
$\Om \ni y_j \to x_0$, as $j \to \infty$,
and
\[
 \lim_{j \to \infty} H d_{x_0}(y_j) =0,
\quad \text{where }   d_{x_0}(x):=\min\{1,|x-x_0|\}.
\]
Now consider a bounded function
$g \in \VspOm$ that is 
continuous at $x_0$.
We may assume that $g(x_0)=0$.
Let $\eps>0$.
Since $g$ is continuous at $x_0$ and bounded,
there is $m>0$ such that $g \le md_{x_0}+\eps$ on $\Rn$.
Thus, by the comparison principle Theorem~\ref{thm-comparison},
\[
     \limsup_{j \to \infty} Hg(y_j) 
    \le  m    \limsup_{j \to \infty}  H d_{x_0}(y_j) + \eps
    = \eps.
\]
Letting $\eps \to 0$, shows that 
$     \limsup_{j \to \infty} Hg(y_j) \le 0$.
Applying this 
also to $-g$ shows that
\[
    0 \le \liminf_{j \to \infty} Hg(y_j) \le \limsup_{j \to \infty} Hg(y_j) \le 0,
\]
i.e.\ $\lim_{j \to \infty} Hg(y_j)=0$.
Thus \eqref{eq-univ-H} has been shown.

Next note that $Hd_{x_0}=Pd_{x_0}$, by Theorem~\ref{thm-Perron}.
The proof of \eqref{eq-univ-P} is now almost identical 
to the proof of \eqref{eq-univ-H}, the only difference is
that instead of appealing to the comparison principle Theorem~\ref{thm-comparison},
we use the fundamental fact
that 
\[
    \uP g_1 \le \uP g_2 \quad \text{if } g_1 \le g_2 \text{ on } \Rn,
\]
which is obvious from the definition of upper Perron solutions.
\end{proof}

The boundedness assumption in~\eqref{eq-univ-P} cannot
be dropped, see Remark~\ref{rmk-bdd-P-cannot-be-dropped}.
We do not know if it can be dropped in~\eqref{eq-univ-H}.
However, in the linear case, i.e.\ when $p=2$, 
the boundedness assumption in~\eqref{eq-univ-H} can be dropped.
This is the content of the following result.

\begin{thm}\label{thm-str-irr-linear}
Assume that $p=2$
and that $x_0 \in \bdy \Om$ is strongly irregular.
Then there is a  sequence
$\{y_j\}_{j=1}^\infty$ such that
$\Om \ni y_j \to x_0$, as $j \to \infty$,
and 
\begin{equation*}
 \lim_{j \to \infty} H g(y_j) =g(x_0)
 \quad \text{for every 
$g \in \VstOm$ that is  continuous at $x_0$.}
\end{equation*}
\end{thm}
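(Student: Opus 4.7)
The plan is to reuse the universal sequence $\{y_j\}$ from Theorem~\ref{thm-univ-seq}, characterized by $Hd_{x_0}(y_j)\to 0$, and to extend the proof of that theorem to unbounded $g\in\VstOm$ continuous at $x_0$ by exploiting the linearity of $\LL$ when $p=2$. First I would subtract the constant $g(x_0)$ (which lies in $\VstOm$ since $\Om$ is bounded) to reduce to the case $g(x_0)=0$. Using that $H$ is linear for $p=2$, together with the splitting $g=g^+ - g^-$ (both terms inheriting $\VstOm$ and continuity at $x_0$ with value $0$), it suffices to treat $g\ge 0$.

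Given $\eps>0$, continuity of $g$ at $x_0$ supplies an $r>0$ with $|g|\le\eps$ on $B(x_0,2r)$. I would choose a smooth cutoff $\eta$ with $\eta\equiv 1$ on $B(x_0,r)$ and $\supp\eta\subset B(x_0,2r)$, and split $g=g_1+g_2$, where $g_1:=\eta g\in\VstOm$ is bounded by $\eps$ everywhere, and $g_2:=(1-\eta)g\in\VstOm$ vanishes on $B(x_0,r)$ (hence is continuous at $x_0$ with value $0$, but possibly unbounded). By linearity $Hg=Hg_1+Hg_2$, and comparison with the constants $\pm\eps$ (Theorem~\ref{thm-comparison}) yields $|Hg_1|\le\eps$ on $\R^n$; in particular $|Hg_1(y_j)|\le\eps$ uniformly in $j$.

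The remaining step is to prove $Hg_2(y_j)\to 0$. I would approximate $g_2$ by its truncations $g_2^M:=\min\{g_2,M\}\in\VstOm$, each of which is bounded and still vanishes on $B(x_0,r)$, so Theorem~\ref{thm-univ-seq} yields $Hg_2^M(y_j)\to 0$ as $j\to\infty$ for every fixed $M$. Linearity gives $H(g_2-g_2^M)(y_j)=Hg_2(y_j)-Hg_2^M(y_j)$, and the whole matter reduces to the uniform tail bound
\[
  \limsup_{j\to\infty}|H(g_2-g_2^M)(y_j)|\to 0\quad\text{as }M\to\infty.
\]

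This tail bound is the main obstacle and is precisely where the linearity $p=2$ becomes indispensable. In that case $H$ is a positive bounded linear operator admitting the representation $Hh(y)=\int_{\Om^c}h\,d\mu_y$ against a probability harmonic measure $\mu_y$ on $\Om^c$, and Theorem~\ref{thm-univ-seq} becomes the weak-$\ast$ convergence $\mu_{y_j}\to\delta_{x_0}$ against bounded continuous test functions. I expect the tail bound to follow by coupling this weak convergence with the $\Vst$-tail integrability of $g_2$, for instance via the Cauchy--Schwarz inequality $|Hh(y)|^2\le H(h^2)(y)$ and a dominated convergence argument that is made uniform in $j$ because $g_2-g_2^M$ vanishes in the fixed neighborhood $B(x_0,r)$ of $x_0$, so only the vanishing tail of $\mu_{y_j}$ outside $B(x_0,r)$ contributes. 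Once the tail bound is established, letting first $M\to\infty$ and then $\eps\to 0$ gives $|Hg(y_j)|\le\eps$ eventually, hence $Hg(y_j)\to 0=g(x_0)$.
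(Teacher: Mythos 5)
Your high-level plan---reduce to $g\ge 0$ with $g(x_0)=0$, peel off a bounded piece handled by Theorem~\ref{thm-univ-seq}, and show that the solution corresponding to the remaining unbounded ``tail'' data is small along $\{y_j\}$---is the right skeleton, and the spatial cutoff $g=\eta g+(1-\eta)g$ is a legitimate alternative to the paper's vertical truncation $g=\min\{g,t\}+(g-t)_\limplus$. The reduction to the uniform tail bound $\limsup_j|H(g_2-g_2^M)(y_j)|\to 0$ as $M\to\infty$ is also the correct place to focus: it is the analogue of the paper's claim that $Hg_t(y_j)$ is small when $\|g_t\|_{\Vst(\Om)}$ is small.

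However, the proposed mechanism for proving that tail bound does not close. The harmonic measure representation $Hh(y)=\int_{\Omc}h\,d\mu_y$ is only readily available for bounded data (via Perron/resolutivity), and extending it to unbounded $h\in\Vst(\Om)$ is itself nontrivial and not established here. Worse, Jensen's inequality $|Hh(y)|^2\le H(h^2)(y)$ does not help: to conclude that $H(h^2)(y_j)$ is small you would need exactly the same convergence result for the (unbounded, and not obviously $\Vst$) function $h^2$, so the argument is circular. The appeal to ``dominated convergence made uniform in $j$'' also lacks a dominating object, since the measures $\mu_{y_j}$ vary with $j$ and nothing controls $\int_{\Omc\setminus B(x_0,r)}(g_2-g_2^M)\,d\mu_{y_j}$ uniformly in $j$ merely from $g_2-g_2^M$ vanishing on $B(x_0,r)$. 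What is actually needed is a quantitative bound of $\sup_B Hh$ in terms of $\|h\|_{\Vst(\Om)}$ for nonnegative $h$ vanishing near $x_0$. The paper supplies exactly this via the weak Harnack inequality with a tail term (Theorem~\ref{thm-KL-3.3}, with $M=0$ since the data vanishes in $2B\setminus\Om$), combined with the variational energy estimate $[Hh]_{\Vst(\Om)}\simle[h]_{\Vst(\Om)}$ and the fractional Poincar\'e inequality~\eqref{eq-frac-PI} to control the $L^2$ and $\Tail$ terms by $\|h\|_{\Vst(\Om)}$. That quantitative step is the heart of the proof and is absent from your sketch; without a substitute, the argument has a genuine gap.
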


For proving this 
we will need the following 
weak Harnack type inequality with a small tail term.

\begin{thm} \label{thm-KL-3.3}
\textup{(Kim--Lee~\cite[Theorem~3.3]{KL-orlicz})}
If $u \in V^{s, p}(\Omega)$ is a nonnegative solution of $\LL u=0$ in $\Omega$, 
then for any $x_0 \in \Rn$, $r>0$ and $0 < \delta <1$,
\begin{equation*}
\esssup_{B(x_0,r)} u_M \simle \delta  \Tail(u_M; x_0, r) + 
\delta^{-(p-1)n/sp^2}
\biggl(r^{-n} \int_{B(x_0,2r)} u_M^p \,dx \biggr)^{1/p},
\end{equation*}
where 
\begin{align*}
M & = \esssup_{B(x_0,2r) \setminus \Omega} u, 
\quad  u_M(x) = \max\{u(x),M \}, \\
\Tail(u; x_0, r) &= \biggl( r^{sp} \int_{\Rn \setminus B(x_0,r)} 
    \frac{|u(y)|^{p-1}}{|y-x_0|^{n+sp}} \,dy \biggr)^{1/(p-1)}
\end{align*}
and the comparison constant 
only depends on $n$, $s$, $p$ and $\Lambda$.
\end{thm}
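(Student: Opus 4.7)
The plan is to prove this local boundedness estimate by a De~Giorgi iteration, in the spirit of Di~Castro--Kuusi--Palatucci and Cozzi, refined by a scaling argument that extracts the small factor $\delta$ in front of $\Tail$. Set $v := u_M = \max\{u,M\}$. Since $M \ge u$ a.e.\ on $B(x_0,2r)\setm\Om$, the function $v$ equals the constant $M$ a.e.\ there, and the truncation from below preserves the subsolution property: $v$ is an $\LL$-subsolution on $B(x_0,2r)$. Consequently, for every level $k\ge M$, the truncation $w_k := (v-k)_\limplus$ vanishes on $B(x_0,2r)\setm\Om$, so $w_k \eta^p$ is an admissible test function for any cutoff $\eta \in C^\infty_c(B(x_0,2r))$.

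First I would derive a Caccioppoli-type inequality on nested balls $B_\rho \Subset B_{\rho'} \subset B(x_0,2r)$. Testing the subsolution inequality $\E(u,\phi)\le 0$ against $\phi = w_k \eta^p$ with $\eta\equiv1$ on $B_\rho$, $\eta\equiv0$ outside $B_{\rho'}$ and $|\nabla\eta|\simle(\rho'-\rho)^{-1}$, and using the ellipticity bound \eqref{eq-comp-(x,y)} together with standard pointwise estimates for $|a-b|^{p-2}(a-b)$ applied to $u(x)-u(y)$ versus $w_k(x)-w_k(y)$, yields
\begin{equation*}
  [w_k\eta]^p_{\Wsp(\Rn)}
  \simle \frac{1}{(\rho'-\rho)^{sp}}\int_{B_{\rho'}} w_k^p\,dx
  + \Tail(v;x_0,\rho')^{p-1}\int_{B_{\rho'}} w_k\,dx.
\end{equation*}
Combined with the fractional Sobolev embedding $\Vspo(B_{\rho'}) \hookrightarrow L^{p\kappa}(B_{\rho'})$, with $\kappa = n/(n-sp) > 1$ (or any $\kappa>1$ if $sp\ge n$), this produces a reverse-H\"older gain on each super-level set.

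Next I would iterate on the geometric sequences $\rho_j = r(1+2^{-j})$ and $k_j = k_0 + K(1-2^{-j})$ with $k_0 = M$, setting $A_j = (r^{-n}\int_{B_{\rho_j}} w_{k_j}^p\,dx)^{1/p}$. The Caccioppoli--Sobolev step produces a recursion of the schematic form $A_{j+1} \le C\,b^j K^{-(\kappa-1)}\bigl(A_j^{1+\alpha} + K^{1-p}\,\Tail(v;x_0,2r)^{p-1} A_j^{1+\beta}\bigr)$ for explicit $\alpha,\beta>0$ depending only on $n,s,p$. The standard De~Giorgi iteration lemma then forces $A_j\to0$, so $v\le k_0+K$ on $B_r$, provided $K$ dominates a geometric combination of $A_0=(r^{-n}\int_{B_{2r}} v^p\,dx)^{1/p}$ and $\Tail(v;x_0,2r)$. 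This yields the baseline estimate $\esssup_{B_r} v \simle (r^{-n}\int_{B_{2r}} v^p\,dx)^{1/p} + \Tail(v;x_0,2r)$.

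The hard part is to refine this baseline into the stated inequality with $\delta\,\Tail$ and the sharp coefficient $\delta^{-(p-1)n/sp^2}$ in front of the $L^p$ term. I would do this by an intrinsic scaling: apply the baseline estimate to $\lambda^{-1}v$ for a free parameter $\lambda>0$ (which leaves the subsolution property and the kernel structure intact), then absorb a $\lambda\,\esssup_{B_r} v$ contribution by choosing $\lambda$ to balance against $\delta$. The exponent $(p-1)n/(sp^2)$ arises precisely from the interplay between the Sobolev gain factor $\kappa = n/(n-sp)$, the power $p-1$ in the tail contribution from the Caccioppoli inequality, and the integrability exponent $p$ in the $L^p$ average; tracking these through the De~Giorgi lemma and equating the $\lambda$-powers on both sides fixes the balance at $\lambda \simeq \delta^{-n/(sp^2)}$, which after multiplication by the $p-1$ powers in the tail term produces the stated exponent. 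Verifying this balance with dependence only on $n, s, p, \La$ is the principal technical obstacle, and closely mirrors the bookkeeping of the tail-sensitive Moser iteration in \cite{KL-orlicz}.
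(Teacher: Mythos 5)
This statement is not proved in the paper; it is quoted from Kim--Lee \cite[Theorem~3.3]{KL-orlicz}, so there is no in-paper argument to compare against. Judged on its own terms, your outline has the right skeleton: a tail-aware De~Giorgi iteration in the spirit of Di~Castro--Kuusi--Palatucci, pushed up to $\bdy\Om$ by replacing $u$ with $u_M$ so that all superlevel sets at levels $k\ge M$ land (a.e.) inside $\Om$, and the exponent $(p-1)n/sp^2$ is indeed the one that falls out of combining the Sobolev gain $\kappa=n/(n-sp)$ with the $(p-1)$-power carried by the tail --- it matches \cite[Theorem~1.1]{DCKP16}.

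Two points are not right as written. First, ``$v$ is an $\LL$-subsolution on $B(x_0,2r)$'' overstates the situation: $u$ solves $\LL u=0$ only in $\Om$, nothing is assumed about $u$ on $B(x_0,2r)\setm\Om$, so $u_M$ is a subsolution only in $\Om$. What actually makes the argument go through is the observation you make next: $(u_M-k)_\limplus$ vanishes on $B(x_0,2r)\setm\Om$ for $k\ge M$, so $w_k\eta^p$ is an admissible test function for $u$ in $\Om$ (after an approximation step to get it into $\VspoOm$). That is a statement about test-function admissibility, not about $u_M$ solving an equation on the full ball, and conflating the two hides a genuine technical step. Second, and more substantively, the ``intrinsic scaling'' step cannot produce the $\delta$-interpolation. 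Both $\LL u=0$ and the baseline estimate $\esssup_{B_r}v\simle (r^{-n}\int_{B_{2r}}v^p)^{1/p}+\Tail(v;x_0,r)$ are $1$-homogeneous in $u$, so replacing $v$ by $\lambda^{-1}v$ and multiplying back by $\lambda$ returns the baseline verbatim, with no absorbable $\lambda$-term left over. In the literature the free parameter $\delta$ enters \emph{inside} the iteration: the recursion for the $A_j$'s closes provided the target level increment $K$ dominates a sum of two competing quantities (one from the local $L^p$ average, one from the tail), and for any $0<\delta<1$ the choice $K\simeq\delta\,\Tail(u_M;x_0,r)+\delta^{-(p-1)n/sp^2}(r^{-n}\int_{B_{2r}}u_M^p\,dx)^{1/p}$ satisfies that closing condition, which is exactly the stated bound. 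So the mechanism is a parametrized choice of $K$ within the De~Giorgi lemma, not a post-hoc rescaling of the solution. (A minor point: in your Caccioppoli inequality the tail should be that of $w_k$, not of $v$, but since $0\le w_k\le v$ this only affects constants.)
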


\begin{proof}[Proof of Theorem~\ref{thm-str-irr-linear}]
Consider the sequence $\{y_j\}_{j=1}^\infty$ given by 
Theorem~\ref{thm-univ-seq}
and let 
$g \in \VstOm$ be a (not necessarily bounded) function that is 
continuous at $x_0$.

We may assume that $x_0=0$
and $g(x_0)=0$, 
and also that 
$g \ge 0$, by considering $g=g_\limplus-g_\limminus$ and 
using the comparison principle (Theorem~\ref{thm-comparison}).
Let $0 < \eps  <1$.
By dominated convergence, 
$\|(g-t)_\limplus\|_{V^{s, 2}(\Om)} \to 0$, as $t \to \infty$.
Hence, there is $t>0$ such that $\|g_t\|_{V^{s, 2}(\Om)} < \eps$,
where $g_t=(g-t)_\limplus$. 
Then
$g=\min\{g,t\} + g_t$.
Since $g$ is continuous at $x_0$,
there is a ball $B:=B(0, r)$ such that $\sup_{2B}g<t$.

Let $u = Hg_t$ and 
$\delta = \eps^{2s/n}< 1$.
As $u=g_t=0$ in $2B \setminus \Om$, Theorem~\ref{thm-KL-3.3} 
with $M=0$ shows that
\begin{align}
\sup_{B} u
&\simle \delta \Tail(u; 0, r) + 
\delta^{-n/4s}
\biggl( r^{-n}\int_{2B} u^2 \,dx \biggr)^{1/2} \nonumber \\
&\simle
\eps^{2s/n}
\Tail(u; 0, r) + 
\eps^{-1/2}r^{-n/2}
\|u\|_{L^2(\Om)}.
\label{eq-Tail-est}
\end{align}

Since $u$ is a solution of $\LL u=0$ in $\Om$, 
by testing the equation with 
$u-g_t \in V^{s, 2}_0(\Om)$ we see that
\begin{equation*}
0=\E(u, u-g_t)=\iint_{(\Om^c \times \Om^c)^c} 
(u(x)-u(y))((u-g_t)(x)-(u-g_t)(y)) k(x, y) \,dy \,dx.
\end{equation*}
It then follows from Cauchy--Schwarz's 
inequality that
\begin{align*}
[u]_{V^{s, 2}(\Om)}^2
&\leq \Lambda \iint_{(\Om^c \times \Om^c)^c} (u(x)-u(y))^2 k(x, y) \,dy \,dx \\
&\leq \Lambda \biggl( \iint_{(\Om^c \times \Om^c)^c} (u(x)-u(y))^2 
   k(x, y) \,dy \,dx \biggr)^{1/2} \\
&\quad \times \biggl( \iint_{(\Om^c \times \Om^c)^c} (g_t(x)-g_t(y))^2 
    k(x, y) \,dy \,dx \biggr)^{1/2} \\
&\leq 2\Lambda^2 [u]_{V^{s, 2}(\Om)} [g_t]_{V^{s, 2}(\Om)},
\end{align*}
which shows that $[u]_{V^{s, 2}(\Om)} \leq 2\Lambda^2 [g_t]_{V^{s, 2}(\Om)}$. 
The fractional Poincar\'e inequality~\eqref{eq-frac-PI}
applied to $u-g_t \in V^{s, 2}_0(\Om)$ then gives
\begin{align*}
\|u-g_t\|_{L^2(\Om)} 
&  \simle [u-g_t]_{W^{s, 2}(\R^n)} 
 \le 2^{1/2}[u-g_t]_{V^{s,2}(\Om)} \\ 
&  \simle [u]_{V^{s, 2}(\Om)} + [g_t]_{V^{s,2}(\Om)}
 \simle \|g_t\|_{V^{s,2}(\Om)} 
 < \eps.
\end{align*}
It then follows that
\begin{align}\label{eq-Tail1}
\begin{split}
\Tail(u; 0, r) 
&\le \Tail(g_t; 0, r) + \Tail(u-g_t; 0, r) \\
& \le  \Tail(g; 0, r) + r^{-n} \|u-g_t\|_{L^1(\Om)}  \\
&\le \Tail(g; 0, r) + 
r^{-n} |\Om|^{1/2}  \|u-g_t\|_{L^2(\Om)} \\
&\simle \Tail(g; 0, r) + r^{-n} |\Om|^{1/2}
\end{split}
\end{align}
and that
\begin{equation}\label{eq-Tail2}
\|u\|_{L^2(\Om)} \le \|u-g_t\|_{L^2(\Om)} + \|g_t\|_{L^2(\Om)} \simle \varepsilon.
\end{equation}
Inserting \eqref{eq-Tail1} and~\eqref{eq-Tail2} into~\eqref{eq-Tail-est} shows that
\begin{equation*}
\limsup_{j \to \infty} u(y_j) 
\leq \sup_B u
\simle \eps^{2s/n} \bigl( \Tail(g; 0, r) + r^{-n}|\Omega|^{1/2} \bigr)
+\eps^{1/2}r^{-n/2}.
\end{equation*}
Note that $\Tail(g; 0, r) < \infty$ since $g\in \Vst(\Om)\subset L^{1}_{2s}(\Rn)$.
Hence, upon letting $\eps \to 0$,
we conclude that
\begin{equation*}
\lim_{j \to \infty} Hg_t(y_j) = \lim_{j \to \infty} u(y_j) = 0.
\end{equation*}
On the other hand, by 
Theorem~\ref{thm-univ-seq},
\begin{equation*}
\lim_{j \to \infty} H\min\{g, t\}(y_j) = 0.
\end{equation*}
Since $p=2$, 
the operator $\LL$ is linear and thus $Hg=H\min\{g, t\}+Hg_t$,
which finishes the proof.
\end{proof}

\section{Semiregularity for different values of
  \texorpdfstring{$(s,p)$}{(s,p)}}
\label{sect-different-sp}

\emph{In this section we also include $s=1$.}

\medskip

We will now also include the local case 
$\Delta_pu=0$ with $s=1$ in our investigation,
which is relevant for the comparison results below.
In this case we define $\Cop$ to be the Sobolev capacity  
associated with the classical  
Sobolev space $\Wp(\Rn)$ and
defined e.g.\ in Mal\'y--Ziemer~\cite[Definition~2.1]{MZ}
(with $r=1$ and ${\cal O}=\R^n$, and
denoted 
$\mathbf{C}_p(\cdot)=\mathbf{\ga}_{p,1}(\cdot\,;\R^n)$ therein.
Strictly speaking the definitions 
in~\cite{MZ} are only for $p \le n$, but it is straightforward
to consider them for all $1 < p <\infty$.)

For  $s=1$ we use the definition of semiregular boundary points
from Bj\"orn~\cite[Section~1]{ABclass}.
The following characterization of such semiregular points,
corresponding to 
Proposition~\ref{prop-S-largest},
is essential for the comparisons below.

\begin{prop} \label{prop-semi-s=1}
\textup{(\cite[Theorem~3.3]{ABclass})}
Assume that $s=1$ and 
$x_0 \in \bdy \Om$.
  Then   $x_0$  is semiregular
  if and only if there is $r>0$ such that $\Cop(B(x_0,r) \setm \Om)=0$.
\end{prop}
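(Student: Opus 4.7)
The plan is to adapt, to the local setting $s=1$, the argument used to prove the first identity of \eqref{eq-S} in Proposition~\ref{prop-S-largest}, which in turn was extracted from Case~1 and Case~2 of the proof of Theorem~\ref{thm-trich}. Each nonlocal ingredient has a well-known local counterpart for $\Delta_p u=0$: the removability Theorem~\ref{thm-remove} is replaced by the classical removability theorem for bounded $p$-harmonic functions across sets of $\Wp$-capacity zero (see \cite[Theorem~7.36]{HeKiMa}); the Kellogg property (Theorem~\ref{thm-kellogg}) and the Wiener criterion (Theorem~\ref{thm-main-reg}) are replaced by their classical versions for the $p$-Laplacian. The definition of semiregularity for $s=1$ from Bj\"orn~\cite{ABclass} is structurally the same as ours: $x_0$ is semiregular if it is irregular and the limit $\lim_{\Om\ni x\to x_0}Hg(x)$ exists for every admissible continuous $g$.

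For the \emph{``if''} direction, assume $\Cop(B \setm \Om)=0$ with $B=B(x_0,r)$, and set $E=B\setm\Om$, which is relatively closed in the open set $\Om\cup B$. For any continuous Dirichlet datum $g$ in the natural Sobolev class, the Sobolev solution $Hg$ is bounded and $p$-harmonic in $\Om$, hence $p$-harmonic in the larger open set $\Om\cup B$ after removing $E$. The classical removability result then extends $Hg$ to a $p$-harmonic function $u$ on $\Om\cup B$; since $u$ is continuous and agrees with $Hg$ on $\Om$, the limit $\lim_{\Om\ni x\to x_0}Hg(x)=u(x_0)$ exists. It remains to show $x_0$ is irregular, which follows from the local Wiener criterion: $\Cop(\clB(x_0,\rho)\setm\Om)=0$ for every $\rho<r$ implies that the Wiener integral at $x_0$ is finite, so $x_0$ is not regular; hence it must be semiregular.

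For the \emph{``only if''} direction, assume $\Cop(B(x_0,\rho)\setm\Om)>0$ for every $\rho>0$; we will produce two sequences in $\Om$ converging to $x_0$ along which suitable Sobolev solutions attain different limits. Mirroring Case~2 of the proof of Theorem~\ref{thm-trich}, for every $j\ge 1$ we can find a regular boundary point $x_j\in B(x_0,1/j)\cap\bdy\Om$: if $\Cop(B(x_0,1/j)\cap\bdy\Om)>0$, the local Kellogg property produces one; otherwise $B(x_0,1/j)\setm\clOm$ is nonempty and a nearest-point argument yields a point $x_j$ at which there is an exterior ball, hence regular by the local Wiener criterion. Choosing, as in the proof of Theorem~\ref{thm-semi-d}\ref{d-notreg}, small Lipschitz bumps $u_j$ with disjoint supports in tiny balls around (a subsequence of) the $x_j$'s and with $\|u_j\|_{\Wp(\Rn)}<2^{-j}$, the function $g=\sum_j (-1)^j u_j\in \Wp(\Rn)\cap C(\Rn)$ satisfies $Hg(y)\to(-1)^j$ as $\Om\ni y\to x_j$, and thus $\liminf$ and $\limsup$ of $Hg$ at $x_0$ differ, contradicting semiregularity.

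The main technical point, beyond quoting the correct local removability and Kellogg/Wiener theorems, will be ensuring that the oscillating test function $g$ is really admissible in the $s=1$ setting: one must check that points have zero $\Wp$-capacity in the regime where semiregular points can exist (i.e.\ $p\le n$), so that the capacitary bumps $u_j$ exist with arbitrarily small Sobolev norm, and that the sum converges in $\Wp(\Rn)$. When $p>n$, points have positive $\Cop$-capacity, so $\Cop(B(x_0,\rho)\setm\Om)\ge\Cop(\{x_0\})>0$ for all $\rho>0$ and simultaneously every boundary point is regular, making both sides of the equivalence vacuous; this case must be handled (trivially) separately. Once these checks are in place, the argument proceeds in exact parallel to the nonlocal proofs already given in Sections~\ref{sect-trich}.
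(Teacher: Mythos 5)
The paper does not prove this proposition; it simply cites \cite[Theorem~3.3]{ABclass}. Your proposal reconstructs a proof from scratch by adapting the arguments of Sections~\ref{sect-trich}, which is a legitimate aim, and your \emph{``if''} direction (local removability plus the local Wiener criterion) is sound and parallels Case~1 of the proof of Theorem~\ref{thm-trich}.

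However, there is a genuine gap in the \emph{``only if''} direction. You build $g=\sum_j(-1)^j u_j$ with bumps $u_j$ supported in small balls around regular boundary points $x_j\to x_0$, and you assert $g\in \Wp(\Rn)\cap C(\Rn)$. This is false: $g(x_0)=0$, yet $|g|=1$ at the points $x_j$ which accumulate at $x_0$, so $g$ is \emph{not} continuous at $x_0\in\bdy\Om$. Since semiregularity is defined by testing against \emph{continuous} Dirichlet data, this $g$ is inadmissible and cannot be used to falsify condition~\ref{aa}. (In the paper's own proof of Theorem~\ref{thm-semi-d}, implication \ref{d-notreg}$\Rightarrow$\ref{d-liminf}, the same oscillating $g$ is used, but there it is tested against condition~\ref{d-notreg}, which is formulated for \emph{all} $g\in\VspOm$ without any continuity requirement; that proof also assumes $x_0$ regular for contradiction, whereas you are assuming $x_0$ semiregular. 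The two situations are not interchangeable.)

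The correct route for the ``only if'' direction is exactly the Case~2 argument of Theorem~\ref{thm-trich}, without any oscillating function: given $\Cop(B(x_0,\rho)\setm\Om)>0$ for all $\rho>0$, use the local Kellogg property or the exterior-ball construction to find regular $x_j\to x_0$. For any admissible continuous $g$, regularity of $x_j$ yields $y_j\in\Om\cap B(x_j,1/j)$ with $|Hg(y_j)-g(x_j)|<1/j$; continuity of $g$ at $x_0$ then gives $y_j\to x_0$ and $Hg(y_j)\to g(x_0)$, i.e.\ condition~\ref{ab} holds. If $x_0$ were semiregular, combining this with condition~\ref{aa} would force $\lim_{\Om\ni x\to x_0}Hg(x)=g(x_0)$ for every admissible $g$, i.e.\ $x_0$ regular, contradicting the irregularity built into the definition. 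This avoids any continuity issue. Apart from this, your handling of $p>n$ and of the distinctness of the $x_j$ are fine.
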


The following theorem was proved by Adams--Meyers~\cite[Theorem~5.5]{AM73},
see also  Adams--Hedberg~\cite[Theorem~5.5.1]{AH}.
They used the Bessel potential capacity (which is denoted
by $C_{s,p}$ in \cite{AH}).
The counterexamples they construct are 
compact Cantor sets,
and thus the inclusion  \eqref{eq-cap-subsetneq-K} below holds
whenever \eqref{eq-cap-subsetneq} holds.

\begin{remark} \label{rmk-Bsp-Csp}
For $s=1$, the Bessel potential space coincides with 
the Sobolev space  $\Wp(\Rn)$ (with equivalent norms), and so the
capacities are comparable.
For $0<s<1$ the Bessel potential space \emph{does not} coincide with 
the Sobolev space  $\Wsp(\Rn)$.
However, by Triebel~\cite[Theorem p.~172 and Remark~4 pp.~189--190]{Triebel95}
and 
Adams--Hedberg~\cite[Corollary~2.6.8 and Proposition~4.4.4]{AH}
the Bessel potential capacity 
is nevertheless comparable to our Sobolev capacity $\Csp$.
Hence the results 
from~\cite{AH} and~\cite{AM73} apply here.
\end{remark}

\begin{thm}   \label{thm-Adams-Meyers}
\textup{(\cite[Theorem~5.5]{AM73} or~\cite[Theorem~5.5.1]{AH})}
Let $0 < s_j\le 1 < p_j$, with $s_jp_j \le n$, $j=1,2$.  
Then 
\begin{equation}   \label{eq-cap-subsetneq}
\{E\subset\R^n: C_{s_1,p_1}(E)=0\} \subsetneq \{E\subset\R^n: C_{s_2,p_2}(E)=0\}
\end{equation}
if and only if either
\[
s_2 p_2 = s_1 p_1   
\text{ and } p_1 < p_2,
\quad \text{or} \quad  s_2 p_2 < s_1 p_1.  
\]
Moreover, in this case also
\begin{equation}   \label{eq-cap-subsetneq-K}
\{K\subset\R^n \text{ compact}: C_{s_1,p_1}(K)=0\} \subsetneq 
\{K\subset\R^n \text{ compact}: C_{s_2,p_2}(K)=0\}.
\end{equation}
\end{thm}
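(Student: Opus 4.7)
The plan is to reduce, via the comparability noted in Remark~\ref{rmk-Bsp-Csp}, to the corresponding statement for Bessel potential capacities $B_{s,p}$, where the sharp relation to Hausdorff content and Cantor-type constructions is classical. The guiding principle is the dual/energy characterization: for $0<sp<n$, a set $E$ has $B_{s,p}(E)>0$ exactly when $E$ carries a nontrivial Radon measure $\mu$ whose Riesz potential $I_s\mu$ belongs to $L^{p'}(\R^n)$. Zero capacity thus means no such measure exists, and comparisons between different $(s,p)$ amount to transferring such measures between potential spaces.

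For the forward direction (containment plus strictness under the hypothesis), the core tool is the classical two-sided link to Hausdorff measure: when $0<sp<n$, $B_{s,p}(E)=0$ forces $H^{n-sp+\eps}(E)=0$ for every $\eps>0$, while $H^{n-sp}(E)<\infty$ implies $B_{s,p}(E)=0$. In the strict case $s_2p_2<s_1p_1$, inserting a Hausdorff exponent strictly between $n-s_1p_1$ and $n-s_2p_2$ immediately yields containment. In the equal-product case $s_1p_1=s_2p_2$ with $p_1<p_2$, the Hausdorff dimension threshold coincides, and one must invoke refined gauge functions $h_{s,p}(r)=r^{n-sp}\bigl(\log(1/r)\bigr)^{-(p-1)}$ (or similar logarithmic corrections) for which Adams and Meyers established two-sided Frostman-type bounds; comparing $h_{s_1,p_1}$ with $h_{s_2,p_2}$ at small $r$ then delivers the inclusion. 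Strictness is obtained by constructing self-similar Cantor sets $K\subset\R^n$ whose generalized Hausdorff measure is tuned to lie strictly between the critical gauges for $(s_1,p_1)$ and $(s_2,p_2)$; these sets satisfy $B_{s_1,p_1}(K)>0$ (by a Frostman lower bound) yet $B_{s_2,p_2}(K)=0$ (by the Hausdorff upper bound).

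The converse direction follows by a symmetry argument: if neither of the two cases in the hypothesis holds, then either $s_2p_2>s_1p_1$, or $s_2p_2=s_1p_1$ with $p_1\ge p_2$, and the forward direction applied with the indices swapped produces the reverse strict inclusion, contradicting the supposed $\subsetneq$. The strengthening to compact sets in~\eqref{eq-cap-subsetneq-K} is automatic because the Cantor sets constructed above are already compact. I expect the main obstacle to be the equal-product borderline case: it is invisible to ordinary Hausdorff dimension and requires sharp logarithmic gauges together with a delicate self-similar construction, which is where Adams--Meyers contribute the genuinely new technical input beyond the earlier dimension-based theory.
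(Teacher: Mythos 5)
The paper does not prove this theorem; it is stated as a quotation from Adams--Meyers \cite[Theorem~5.5]{AM73} and Adams--Hedberg \cite[Theorem~5.5.1]{AH}, with Remark~\ref{rmk-Bsp-Csp} supplying exactly the bridge from Bessel potential capacity to the Sobolev capacity $\Csp$ that you invoke at the outset. There is therefore no in-paper proof to compare against.

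Your sketch nonetheless accurately reconstructs the shape of the Adams--Meyers argument: the reduction to Bessel capacities, the two-sided Hausdorff/Frostman comparison (if $H^{n-sp}(E)<\infty$ then $C_{s,p}(E)=0$; if $C_{s,p}(E)=0$ then $H^d(E)=0$ for all $d>n-sp$) handling the regime $s_2p_2<s_1p_1$, the logarithmically corrected gauge functions for the equal-product borderline $s_1p_1=s_2p_2$ with $p_1<p_2$, the Cantor-set constructions giving strictness (and automatically producing compact witnesses for~\eqref{eq-cap-subsetneq-K}), and the symmetry argument for the converse. Two small caveats on the fill-in. First, the critical behaviour in the borderline case is governed by convergence or divergence of $\int_0^1\bigl(h(t)/t^{n-sp}\bigr)^{1/(p-1)}\,dt/t$; for $h(r)=r^{n-sp}(\log(1/r))^{-\alpha}$ this transitions at $\alpha=p-1$, so to separate $p_1<p_2$ one should choose $\alpha$ strictly between $p_1-1$ and $p_2-1$ rather than using $\alpha=p-1$ itself as your displayed $h_{s,p}$ suggests. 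Second, the hypothesis allows $s_jp_j=n$, where $n-s_jp_j=0$ and the plain Hausdorff-measure dichotomy degenerates; one should check that the Frostman-type estimates are still usable at that endpoint (they are, but it is worth noting since the naive $H^{n-sp}(E)<\infty\Rightarrow$ capacity zero criterion is vacuous when the exponent is $0$).
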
  

In particular, if $(s_1,p_1) \ne (s_2,p_2)$  and $s_jp_j \le n$, $j=1,2$,
then either~\eqref{eq-cap-subsetneq} holds or
\[
\{E\subset\R^n: C_{s_2,p_2}(E)=0\} \subsetneq \{E\subset\R^n: C_{s_1,p_1}(E)=0\}.
\]

\begin{cor} \label{cor-semireg-inclusion}
Consider $0 < s_j\le1 < p_j$, $j=1,2$.
Then the implication
\begin{equation}   \label{eq-imp-semireg}
\text{$x_0$ is semiregular for $(s_1,p_1)$}
\imp
\text{$x_0$ is semiregular for $(s_2,p_2)$}
\end{equation}
holds, for all bounded open sets $\Om$ with $x_0 \in \bdy \Om$,
if and only if
any of the following  mutually disjoint cases holds\/\textup{:}
\begin{enumerate}
\item \label{g-a}
  $s_1 p_1 > n$,
\item \label{g-b}
  $s_2 p_2 = s_1 p_1\le n$ and $p_1 \le p_2$,
\item \label{g-c}
  $s_2 p_2 < s_1 p_1 \le n$,
\end{enumerate}  
\end{cor}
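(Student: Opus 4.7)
The plan is to reduce the whole statement to a purely capacity-theoretic inclusion question and then appeal to Theorem~\ref{thm-Adams-Meyers}. By Proposition~\ref{prop-S-largest} (for $0<s<1$) and Proposition~\ref{prop-semi-s=1} (for $s=1$), a point $x_0 \in \bdy \Om$ is semiregular for $(s,p)$ if and only if $C_{s,p}(B(x_0,r) \setm \Om) = 0$ for some $r>0$. Thus the implication~\eqref{eq-imp-semireg} reduces to the question of whether every set with zero $(s_1,p_1)$-capacity has zero $(s_2,p_2)$-capacity, which is exactly what Theorem~\ref{thm-Adams-Meyers} answers.

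For the ``if'' direction, case~\ref{g-a} is vacuous: if $s_1 p_1 > n$, then Corollary~\ref{cor-sp>n=>reg} shows that no boundary point of any bounded open set is irregular, hence none is semiregular, for $(s_1,p_1)$. In cases~\ref{g-b} and~\ref{g-c}, Theorem~\ref{thm-Adams-Meyers} gives the inclusion $\{E \subset \Rn : C_{s_1,p_1}(E) = 0\} \subset \{E \subset \Rn : C_{s_2,p_2}(E) = 0\}$ (trivially so when $p_1 = p_2$, in which case $s_1 = s_2$; by~\eqref{eq-cap-subsetneq} otherwise). Plugging in $E = B(x_0,r) \setm \Om$ from the characterization yields the implication immediately.

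For the ``only if'' direction, suppose none of \ref{g-a}--\ref{g-c} holds. Then $s_1 p_1 \le n$, and we are in one of three cases: (i) $s_2 p_2 > n$; (ii) $s_2 p_2 = s_1 p_1 \le n$ and $p_1 > p_2$; or (iii) $s_1 p_1 < s_2 p_2 \le n$. In each I exhibit a counterexample. In (i) take $\Om = B(0,1) \setm \{0\}$ and $x_0 = 0$: Lemma~\ref{lem-sp<=n} gives $C_{s_1,p_1}(\{0\}) = 0$, so $x_0$ is semiregular for $(s_1,p_1)$, while Corollary~\ref{cor-sp>n=>reg} makes $x_0$ regular, hence not semiregular, for $(s_2,p_2)$. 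For (ii) and (iii), applying Theorem~\ref{thm-Adams-Meyers} with $(s_1,p_1)$ and $(s_2,p_2)$ swapped matches one of its two hypotheses in either case, and its ``moreover'' clause delivers a \emph{compact} set $K \subset \Rn$ with $C_{s_1,p_1}(K) = 0$ and $C_{s_2,p_2}(K) > 0$.

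The decisive step in cases (ii) and (iii) is to locate a single $x_0 \in K$ at which the positive $(s_2,p_2)$-capacity of $K$ is visible in every neighbourhood, i.e.\ $C_{s_2,p_2}(K \cap B(x_0,\rho)) > 0$ for all $\rho > 0$; otherwise the positive capacity could live away from any candidate $x_0$ and the counterexample would fail. This is a short compactness--subadditivity argument: if every $y \in K$ admitted some $\rho_y > 0$ with $C_{s_2,p_2}(K \cap B(y,\rho_y)) = 0$, a finite subcover plus countable subadditivity (Proposition~\ref{prop-Cpt-subadd}) would contradict $C_{s_2,p_2}(K) > 0$. With such $x_0$ fixed and $R$ chosen so that $K \Subset B(x_0,R)$, set $\Om = B(x_0,R) \setm K$; this is nonempty, bounded and open (since $|K|=0$ by Lemma~\ref{lem-zero-cap}), with $x_0 \in K \subset \bdy \Om$. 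Then $B(x_0,R) \setm \Om = K$ has zero $(s_1,p_1)$-capacity, so $x_0$ is semiregular for $(s_1,p_1)$; but $B(x_0,\rho) \setm \Om = K \cap B(x_0,\rho)$ has positive $(s_2,p_2)$-capacity for every $\rho > 0$, so $x_0$ is not semiregular for $(s_2,p_2)$, completing the counterexample and the proof.
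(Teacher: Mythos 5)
Your proof is correct and follows the paper's strategy in its broad outline (reduce semiregularity to a capacity-null condition via Propositions~\ref{prop-S-largest} and~\ref{prop-semi-s=1}, then invoke Theorem~\ref{thm-Adams-Meyers}), but the ``only if'' direction is handled by a genuinely different mechanism. The paper takes $\Om = B\setm K$ with $K$ the Adams--Meyers counterexample (or $K=\{x_0\}$ when $s_2p_2>n$), observes that every point of $K$ is semiregular for $(s_1,p_1)$, and then appeals to the \emph{Kellogg property} for $(s_2,p_2)$: since $C_{s_2,p_2}(K)>0$ and the irregular boundary points have zero $C_{s_2,p_2}$-capacity, some point of $K$ must be regular for $(s_2,p_2)$, immediately breaking the implication. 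You instead avoid the Kellogg property altogether and run a compactness-plus-subadditivity argument to locate a single $x_0\in K$ at which $K\cap B(x_0,\rho)$ has positive $(s_2,p_2)$-capacity for all $\rho>0$; at that $x_0$ the characterization from Propositions~\ref{prop-S-largest}/\ref{prop-semi-s=1} shows directly that $x_0$ is not semiregular for $(s_2,p_2)$. This is a valid alternative: it is more self-contained in that it swaps a fairly heavy ingredient (Kellogg) for an elementary covering argument, at the cost of an extra paragraph. One small caveat: when $s_2=1$ your argument relies on countable subadditivity of the classical Sobolev capacity $C_{1,p_2}$, which is not covered by Proposition~\ref{prop-Cpt-subadd} (that result is stated only for $0<s<1$); you would need to cite this from the classical literature, just as the paper cites the classical Kellogg property for $s=1$. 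Similarly, your appeal to Lemma~\ref{lem-sp<=n} for $C_{s_1,p_1}(\{0\})=0$ in case~(i) needs the $s_1=1$ analogue (listed among the facts preceding the paper's proof) when $s_1=1$.
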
  

To be able to also cover  $s=1$ in the proof below, 
we need a few results from the literature
corresponding to 
Lemmas~\ref{lem-sp<=n}, \ref{lem-zero-cap},
the Kellogg property (Theorem~\ref{thm-kellogg})
and its Corollary~\ref{cor-sp>n=>reg}.
We list these facts here:
\begin{enumerate}
\renewcommand{\theenumi}{\textup{(\roman{enumi})}}%
\item
Nonempty open sets have positive capacity 
and
$C_{1,p}(\{x_0\})=0$ if and only if
$p\le n$,
see e.g.\ Mal\'y--Ziemer~\cite[Theorem~2.8]{MZ}
and Heinonen--Kilpel\"ainen--Martio~\cite[Example~2.22 
and Corollary~2.29]{HeKiMa}.

\item
The Kellogg property was shown by 
Kilpel\"ainen~\cite[Corollary~5.6]{Kilp89} when $s=1$ and $p \le n$.
In an equivalent form,
the Kellogg property also follows from earlier
results by Hedberg~\cite[Corollary~1]{Hedberg72} (for $p>2-1/n$)
and Hedberg--Wolff~\cite[Theorem~2]{HedbergWolff} (for $p \le 2-1/n$), together with 
Maz{\cprime}ya's sufficiency part of the Wiener criterion \cite[Theorem, p.~236]{mazya70}.
Classical papers for $s=1$ usually only consider $p \le n$, as
regularity is trivial for $p>n$.
The Kellogg property for the full range $1<p<\infty$ 
can be found in~\cite[Theorem~9.11]{HeKiMa}.
In particular, all points are regular when $p>n$.
\end{enumerate}

\begin{proof}[Proof of Corollary~\ref{cor-semireg-inclusion}]
If \ref{g-a} holds, then $x_0$ is regular 
and thus not semiregular for $(s_1,p_1)$.
Hence, the implication \eqref{eq-imp-semireg} is trivial in this case.
If on the other hand \ref{g-b} or \ref{g-c} holds,
then it follows from 
Propositions~\ref{prop-S-largest} and~\ref{prop-semi-s=1},
together with Theorem~\ref{thm-Adams-Meyers},
that \eqref{eq-imp-semireg} holds.

Conversely, assume that all \ref{g-a}--\ref{g-c} fail.
If $s_2p_2 \le n$, then it follows from Theorem~\ref{thm-Adams-Meyers}
that there is a compact set  $K$ with 
\begin{equation} \label{eq-K}
 C_{s_2,p_2}(K)>0= C_{s_1,p_1}(K).
\end{equation}
If $s_2p_2 > n$ then  
\eqref{eq-K} holds with $K=\{x_0\}$   
since $s_1p_1\le n$ (because \ref{g-a} fails).

Let $B$ be a ball containing $K$ and let $\Om =B \setm K$.
As nonempty open sets always have positive capacity,
we
see that $K$ has empty interior and thus $K \subset \bdy \Om$.
It then follows from 
Propositions~\ref{prop-S-largest} and~\ref{prop-semi-s=1}
that each
$x \in K $ is semiregular for $(s_1,p_1)$.
On the other hand, by the Kellogg property 
there is some $x \in K$ which is regular for $(s_2,p_2)$.
Hence \eqref{eq-imp-semireg} fails.
\end{proof}


\end{document}